\newtheorem{theorem}{Theorem}[section]
\newtheorem{lemma}[theorem]{Lemma}
\theoremstyle{remark} 
\newtheorem{remark}[theorem]{Remark}
\numberwithin{equation}{section}
\newcommand{\al}{\alpha}
\newcommand{\de}{\delta}
\newcommand{\ep}{\varepsilon}
\newcommand{\om}{\omega}
\newcommand{\te}{\theta}
\newcommand{\vp}{\varphi}
\newcommand{\cA}{{\mathcal A}}
\newcommand{\cB}{{\mathcal B}}
\newcommand{\cD}{{\mathcal D}}
\newcommand{\cI}{{\mathcal I}}
\newcommand{\cO}{{\mathcal O}}
\newcommand{\cX}{{\mathcal X}}
\newcommand{\RR}{{\mathbb R}}
\newcommand{\CC}{{\mathbb C}}
\newcommand{\TT}{{\mathbb T}}
\newcommand{\ZZ}{{\mathbb Z}}
\newcommand{\abs}[1]{|{#1}|}
\newcommand{\Abs}[1]{\left|{#1}\right|}
\newcommand{\norm}[1]{\|{#1}\|}
\newcommand{\Norm}[1]{\left\|{#1}\right\|}
\newcommand{\aver}[1]{\langle{#1}\rangle}
\newcommand{\Lie}[1]{\mathfrak{L}_{#1}}
\newcommand{\Loper}[1]{\cX_{#1}}
\newcommand{\R}[1]{\mathfrak{R}_{#1}}
\newcommand{\Dioph}[2]{\cD_{#1,#2}}
\def\to{\rightarrow}
\def\epsilon{\varepsilon}
\def\ii{\mathrm{i}}
\def\ee{\mathrm{e}}
\def\re{\mathrm{Re}}
\def\im{\mathrm{Im}}
\def\dist{\mathrm{dist}}
\def\dif{ {\mbox{\rm d}} }
\def\Dif{ {\mbox{\rm D}} }
\def\Tan{ {\mbox{\rm T}} }
\def\pd{ \partial }
\def\sform{\mbox{\boldmath $\omega$}}
\def\aform{\mbox{\boldmath $\alpha$}}
\def\proj{\mbox{\boldmath $\pi$}}
\def\gform{\mbox{\boldmath $g$}}
\def\J{\mbox{\boldmath $J$}}
\def\I{\mbox{\boldmath $I$}}
\def\D{{Dom}}
\def\mani{{\mathcal{M}}}
\def\Anal{{\cA}}
\def\B{{\mathcal{B}}}
\def\U{{\mathcal{U}}}
\def\V{{\mathcal{V}}}
\def\torus{{\mathcal{K}}}
\def\K{{\tilde{K}}}
\def\LieO{\mathfrak{L}}     % Lie derivative without subscript
\def\H{h}                   % Hamiltonian
\def\Tdown{H}               % Inferior block for Tc
\def\homega{\hat \omega}    % Iso-energetic frequency (ampliada)
\def\Elag{\Omega_L} % Error lagrangian
\def\Esym{E_{\mathrm{sym}}} % Error symplectic
\def\Ered{E_{\mathrm{red}}} % Error reducibility
\def\Elin{E_{\mathrm{lin}}} % Error linear system
\def\xio{\xi^{\omega}}      % Relative correction of the frequency
\def\DLieo{\Delta \LieO}
\def\DeltaK{\Delta K}
\def\Deltao{\Delta \omega}
\def\DeltaL{\Delta L}
\def\DeltaLT{\Delta {L^\top}}
\def\DeltaG{\Delta G}
\def\DeltaJ{\Delta J}
\def\DeltaJT{\Delta J^\top}
\def\DeltaTH{\Delta T_\H} %%Pedro
\def\DeltaTHT{\Delta T^\top_\H} %%Pedro
\def\DeltaGL{\Delta {G_L}}
\def\DeltaOmega{\Delta {\Omega}} %%Pedro
\def\DeltatOmegaL{\Delta {\tilde \Omega_L}}
\def\DeltaA{\Delta A}
\def\DeltaB{\Delta B}
\def\DeltaNO{\Delta N^0}
\def\DeltaNOT{\Delta N^{0\top}}
\def\DeltaTildeN{\Delta \tilde N} %%Pedro
\def\DeltaTildeNT{\Delta \tilde N^\top} %%Pedro
\def\DeltaN{\Delta N}
\def\DeltaNT{\Delta N^\top}
\def\DeltaT{\Delta T}
\def\DeltaTc{\Delta T_c}
\def\DeltaTOI{\Delta \aver{T}^{-1}}
\def\DeltaTcOI{\Delta \aver{T_c}^{-1}}
\def\DeltaLieK{\Delta \LieO K}
\def\LieDeltaK{\LieO \Delta K}
\def\DeltaLieL{\Delta \LieO L}
\def\DeltaLieLT{\Delta \LieO L^\top}
\def\DeltaLieG{\Delta \LieO G}
\def\DeltaLieGL{\Delta \LieO {G_L}}
\def\DeltaLietOmega{\Delta \LieO {\tilde \Omega}}
\def\DeltaLietOmegaL{\Delta \LieO {\tilde \Omega_L}}
\def\DeltaLieB{\Delta \LieO B}
\def\DeltaLieA{\Delta \LieO A}
\def\DeltaLieJ{\Delta \LieO J}
\def\DeltaLieNO{\Delta \LieO N^0}
\def\DeltaLieN{\Delta \LieO N}
\def\Deltao{\Delta \omega}
\def\cteOmega{c_{\Omega,0}}
\def\cteDOmega{c_{\Omega,1}}
\def\ctetOmega{c_{\tilde\Omega,0}}
\def\cteDtOmega{c_{\tilde\Omega,1}}
\def\cteDDtOmega{c_{\tilde\Omega,2}}
\def\cteJ{c_{J,0}}
\def\cteDJ{c_{J,1}}
\def\cteDDJ{c_{J,2}}
\def\cteJT{c_{J^{\top}\hspace{-0.5mm},0}}
\def\cteJInv{c_{J^{-1}\hspace{-0.5mm},0}} %%Pedro
\def\cteJInvT{c_{J^{-\top}\hspace{-0.5mm},0}} %%Pedro
\def\cteDJT{c_{J^{\top}\hspace{-0.5mm},1}}
\def\cteG{c_{G,0}}
\def\cteDG{c_{G,1}}
\def\cteDDG{c_{G,2}}
\def\cteDH{c_{\H,1}}
\def\cteDc{c_{c,1}}
\def\cteDDc{c_{c,2}}
\def\cteXH{c_{Z_\H,0}}
\def\cteDXH{c_{Z_\H,1}}
\def\cteDDXH{c_{Z_\H,2}}
\def\cteDXHT{c_{Z_\H^\top\hspace{-0.5mm},1}}
\def\cteTH{c_{T_\H,0}} %%%Pedro
\def\cteTHT{c_{T^\top_\H,0}} %%%Pedro
\def\cteDTH{c_{T_\H\hspace{-0.5mm},1}} %%%Pedro
\def\cteDTHT{c_{T_\H^\top\hspace{-0.5mm},1}}%%%Pedro
\def\cteDXp{c_{X_p,1}}
\def\cteDXpT{c_{X_p^\top\hspace{-0.5mm},1}}
\def\cteDDXp{c_{X_p,2}}
\def\cteDDXpT{c_{X_p^\top\hspace{-0.5mm},2}}
\def\cauxT{\nu} % Auxiliary constant 1
\def\sigmaDK{\sigma_{L}}
\def\sigmaDKT{\sigma_{L^\top\hspace{-0.5mm}}}
\def\sigmaB{\sigma_B}
\def\sigmaT{\sigma_T}
\def\sigmaTc{\sigma_{T_c}}
\def\sigmao{\sigma_\omega}
\def\CE{C_{E}}
\def\CEo{C_{E^\omega}}
\def\CEc{C_{E_c}}
\def\Clag{C_{\Omega_L}}
\def\Csym{C_{\mathrm{sym}}}
\def\Cred{C_{\mathrm{red}}}
\def\Creduu{C_{\mathrm{red}}^{1,1}}
\def\Credud{C_{\mathrm{red}}^{1,2}}
\def\Creddu{C_{\mathrm{red}}^{2,1}}
\def\Creddd{C_{\mathrm{red}}^{2,2}}
\def\Clin{C_{\mathrm{lin}}}
\def\Clino{C_{\mathrm{lin}}^\omega}
\def\Comega{{C_\omega}}
\def\COmegaL{C_{\Omega_{L}}}
\def\CT{C_T}
\def\CA{C_A}
\def\CL{C_L}
\def\CLT{C_{L^\top}}
\def\CNO{C_{N^0}}
\def\CNOT{C_{N^{0,\top}}}
\def\CN{C_N}
\def\CNT{C_{N^\top}}
\def\CLoperL{C_{\Loper{L}}}
\def\CLoperLT{C_{\Loper{L}^\top}}
\def\CxiNO{C_{\xi^N_0}}
\def\Cxio{C_{\xi^\omega}}
\def\CxiN{C_{\xi^N}}
\def\CxiL{C_{\xi^L}}
\def\Cxi{C_{\xi}}
\def\CGL{C_{G_L}}
\def\CtOmegaL{C_{\tilde \Omega_L}}
\def\CTH{C_{T_\H}}
\def\CTHT{C_{T_\H^\top}}
\def\CTE{C_{T_E}}
\def\CET{C_{E_T}}
\def\CTET{C_{T_E^\top}}
\def\CTildeN{C_{\tilde N}} 
\def\CTildeNT{C_{\tilde N^\top}} 
\def\CELieB{C_{E_{\Lie{}B}}}
\def\CELieBT{C_{E_{\Lie{}B}^\top }}
\def\CELietildeN{C_{E_{\Lie{}\tilde N}}}
\def\CELieN{C_{E_{\Lie{}N}}} 
\def\CELietildeNT{C_{E_{\Lie{}\tilde N}^\top }}
\def\CELieA{C_{E_{\Lie{}A}}}
\def\CDeltaK{C_{\DeltaK}}
\def\CDeltao{C_{\Deltao}}
\def\CDeltaL{C_{\DeltaL}}
\def\CDeltaLT{C_{\DeltaLT}}
\def\CDeltaA{C_{\DeltaA}}
\def\CDeltaB{C_{\DeltaB}}
\def\CDeltaNO{C_{\DeltaNO}}
\def\CDeltaNOT{C_{\DeltaNOT}}
\def\CDeltaTildeN{C_{\DeltaTildeN}} %%Pedro
\def\CDeltaTildeNT{C_{\DeltaTildeNT}} %%Pedro
\def\CDeltaN{C_{\DeltaN}}
\def\CDeltaNT{C_{\DeltaNT}}
\def\CDeltaG{C_{\DeltaG}}
\def\CDeltaGL{C_{\DeltaGL}}
\def\CDeltaOmega{C_{\DeltaOmega}} %%Pedro
\def\CDeltatOmegaL{C_{\DeltatOmegaL}}
\def\CDeltaT{C_{\DeltaT}}
\def\CDeltaTc{C_{\DeltaTc}}
\def\CDeltaTOI{C_{\DeltaTOI}}
\def\CDeltaTcOI{C_{\DeltaTcOI}}
\def\CDeltaI{C_{\Delta,1}}
\def\CDeltaII{C_{\Delta,2}}
\def\CDeltaIII{C_{\Delta,3}}
\def\CDeltatot{C_{\Delta}}
\def\CDeltaJ{C_{\DeltaJ}}
\def\CDeltaJT{C_{\DeltaJT}}
\def\CDeltaTH{C_{\DeltaTH}}%%Pedro
\def\CDeltaTHT{C_{\DeltaTHT}}%%Pedro
\def\CLieOmega{C_{\LieO \Omega}}
\def\CLieK{C_{\LieO K}}
\def\CLieL{C_{\LieO L}}
\def\CLieLT{C_{\LieO L^\top}}
\def\CLieB{C_{\LieO B}}
\def\CLieA{C_{\LieO A}}
\def\CLieNO{C_{\LieO N^0}}
\def\CLieN{C_{\LieO N}}
\def\CLieJ{C_{\LieO J}}
\def\CLieJT{C_{\LieO J^\top}}
\def\CLieG{C_{\LieO G}}
\def\CLietOmega{C_{\LieO \tilde \Omega}}
\def\CLiexi{C_{\LieO \xi}}
\def\CLiexiL{C_{\LieO \xi^L}}
\def\CLiexiN{C_{\LieO \xi^N}}
\def\CLieGL{C_{\LieO G_L}}
\def\CLieOmegaL{C_{\LieO \Omega_L}}
\def\CLietOmegaL{C_{\LieO \tilde \Omega_L}}
\def\CLieDeltaK{C_{\LieDeltaK}}
\def\CDeltaLieK{C_{\DeltaLieK}}
\def\CDeltaLieL{C_{\DeltaLieL}}
\def\CDeltaLieLT{C_{\DeltaLieLT}}
\def\CDeltaLieG{C_{\DeltaLieG}}
\def\CDeltaLieGL{C_{\DeltaLieGL}}
\def\CDeltaLieB{C_{\DeltaLieB}}
\def\CDeltaLieA{C_{\DeltaLieA}}
\def\CDeltaLieJ{C_{\DeltaLieJ}}
\def\CDeltaLieNO{C_{\DeltaLieNO}}
\def\CDeltaLieN{C_{\DeltaLieN}}
\def\CDeltaLietOmega{C_{\DeltaLietOmega}}
\def\CDeltaLietOmegaL{C_{\DeltaLietOmegaL}}
\def\hCDelta{\hat{C}_\Delta}
\begin{document}

\title[Constructive QP-Time-Dependent KAM Theory for Lagrangian Tori]{
Constructive Approaches to QP-Time-Dependent KAM Theory for Lagrangian Tori in Hamiltonian Systems}

\date{\today}

\author{Renato Calleja$^{\mbox{\textdagger} }$}
\address[\textdagger]{IIMAS, Universidad Nacional Auti\'onoma de M\'exico, Apdo. Postal 20-126, C.P. 01000, M\'exico D.F., M\'exico.}
\email{calleja@mym.iimas.unam.mx}

\author{Alex Haro$^{\mbox{\textdaggerdbl}, \mbox{\textasteriskcentered}}$}
\address[\textdaggerdbl]{Departament de Matem\`atiques i Inform\`atica, Universitat de Barcelona,
Gran Via 585, 08007 Barcelona, Spain.}
\address[\textasteriskcentered]{ Centre de Recerca Matem\`atica, Edifici C, Campus Bellaterra, 08193 Bellaterra, Spain.}
\email{alex@maia.ub.es}

\author{Pedro Porras$^{\mbox{\textdagger}, \mbox{\textdaggerdbl}}$}
%\address[\textdaggerdbl]{ Department of Mathematics and Mechanics, IIMAS- UNAM, 
%Admon. No. 20, Mexico City, Mexico}
\email{pedro.porras@iimas.unam.mx}
\thanks{Corresponding author: Pedro Porras (\texttt{pedro.porras@iimas.unam.mx}).}
\begin{abstract}
In this paper, 
we prove a KAM theorem in a-posteriori format, 
using the parameterization method to look invariant tori in non-autonomous Hamiltonian systems with 
$n$ degrees of freedom that depend periodically or quasi-periodically (QP) on time,
with $\ell$ external frequencies. 
Such a system is described by a Hamiltonian function in the $2n$-dimensional phase space,
$\mani$, 
that depends also on $\ell$ angles, 
$\varphi\in \TT^\ell$. 
We take advantage of the fibbered structure of the extended phase space
$\mani \times \TT^\ell$. 
%Given that the temporal dynamics are periodic or quasi-periodic, 
%we introduce a variable $\varphi\in\TT^\ell$, 
%i.e., the phase space is $\mani\times\TT^\ell$, 
%with $\mani$ an open set of $\RR^{2n}$. 
As a result of our approach, 
the parameterization of tori requires the last $\ell$ variables, to be precise $\varphi$, 
while the first $2n$ components are determined by an invariance equation. 
This reduction decreases the dimension of the problem where the unknown is a parameterization from $2(n+\ell)$ to $2n$.

%Since the proof follows an a-posteriori approach, 
We employ a quasi-Newton method, in order to prove the KAM theorem.
This iterative method begins with an initial parameterization of an approximately invariant torus, 
meaning it approximately satisfies the invariance equation %as defined by the invariance error.
The approximation is refined by applying corrections that reduce quadratically the invariance equation error. 
This process converges to a torus in a complex strip of size $\rho_\infty$, 
provided suitable Diophantine $(\gamma,\tau)$ conditions and a non-degeneracy condition on the torsion are met. 
%These conditions allow us to prove the persistence of invariant tori if $\gamma^{-4} \rho^{-4\tau}$ times the error is sufficiently small, 
%thereby establishing the existence of invariant tori of Hamiltonian dynamical systems.
Given the nature of the proof, 
this provides a numerical method that can be effectively implemented on a computer, 
the details are given in the companion paper \cite{porras2025constructiveII}. 
This approach leverages precision and efficiency to compute invariant tori.
\end{abstract}
\maketitle

\tableofcontents

\section{Introduction}

The KAM Theory, 
developed by Kolmogorov, 
Arnold, 
and Moser \cite{kolmogorov1954conservation, Arnold63a, moser1962invariant}, 
plays a crucial role in understanding the dynamics of Hamiltonian systems. 
This theory states that, 
certain invariant tori in an integrable system survive after small perturbations, 
maintaining a quasiperiodic structure. 
Despite its importance, 
the perturbative nature of KAM theory limits its practical application, 
as it does not always allow for the explicit calculation of invariant manifolds within the physically relevant regime.

However, 
de la Llave, 
González, 
Jorba, 
and Villanueva introduced the parameterization method in KAM Theory \cite{de2005kam}, 
which offers a significant improvement. 
This method establishes connections between theoretical predictions and practical applications, 
making it practical and accessible for real-world scenarios. 
By solving an invariance equation to parameterize the invariant torus and employing iterative schemes with quasi-Newton methods, 
the parameterization approach addresses the limitations of the classical perturbative methods. 
This allows researchers to bridge the gap between theory and practice, 
effectively applying KAM theory in various contexts.

%{\color{blue}
For instance, 
time-dependent Hamiltonians, 
which naturally arise in astrodynamics and celestial mechanics, 
provide a framework for studying the evolution of dynamical systems. 
A fundamental class of these systems is known as the restricted $N$-body problems, 
where a small-mass particle moves under the gravitational influence of a set of primary bodies without affecting their motion. 

The Circular Restricted Three-Body Problem (CRTBP) is the simplest case, 
where two primary bodies orbit each other in circular paths around their common center of mass while the third body moves within their gravitational field \cite{szebehely1969theory}. 
A more general case is the Elliptic Restricted Three-Body Problem (ERTBP), 
where the primary bodies follow elliptical orbits, 
introducing explicit periodic time dependence. 

The Quasi-Bicircular Problem (QBCP) further complicates the system by incorporating a third massive body in an approximately circular orbit, 
leading to additional resonances \cite{andreu1998quasi,celletti2007kam}.
These models are crucial for understanding dynamical stability, 
the existence of invariant tori in near-integrable systems, 
and the design of space missions
\cite{andreu1999translunar, andreu2003preliminary}.

Building on this,
Fern\'andez-Mora, 
Haro and Mondelo \cite{fernandez2024flow,fernandez2024convergence} introduce a method to compute parameterizations of partially hyperbolic invariant tori and their invariant bundles in non-autonomous quasi-periodic Hamiltonian systems. 
The papers \cite{fernandez2024flow,fernandez2024convergence} generalize flow map parameterization techniques to the quasi-periodic setting.
Introduces fiberwise isotropic tori and discusses symplectic deformations and moment maps
to compute parameterizations of partially hyperbolic invariant tori and their invariant bundles in non-autonomous quasi-periodic Hamiltonian systems. 
These constructions are vital for guaranteeing the existence of solutions to cohomological equations, 
and their algorithms have been successfully applied to the elliptic restricted three-body problem to compute nonresonant 3-dimensional invariant tori and their invariant bundles around the $L_1$ point.

Other scenarios where periodic and quasi-periodic Hamiltonian systems naturally emerge include Tokamaks, toroidal devices designed for magnetic confinement fusion. 
Studies, 
such as those \cite{chandre2005control} by Chandre, 
Vittot, 
Ciraolo, 
Ghendrih, 
and Lima, 
or \cite{del2000self} by del Castillo-Negrete, 
highlight the dynamics of magnetic field lines and transport phenomena in these devices. 
Understanding the stability and structure of invariant tori in this context is essential for effective plasma confinement and reducing chaotic transport. 
All these considerations have motivated the present work.
We present a KAM theorem for quasiperiodically time-dependent systems in a-posteriori format, 
using the techniques developed in \cite{HaroCFLM16, haro2019posteriori, figueras2025sun}.% Pensar mas referencias.

We present a proof in a-posteriori format using the parametrization method to obtain a Lagrangian torus in Hamiltonian systems that depend periodically 
or quasi-periodically on time, 
with external frequencies vector $\alpha \in \RR^\ell$.
One of the advantages of this constructive approach is that the proof suggests an algorithm that is implemented numerically on a computer in \cite{porras2025constructiveII}.
%{\color{magenta}The standard approach to addressing non-autonomous systems involves extending both the phase space and the symplectic structure. 
%However, }
In this work, 
%%we take a different perspective by leveraging the fibered structure of the extended phase space,
we also take advantage of the symplectic fibered structure of the extended phase space,
using the angles that describe the periodic or quasi-periodic external action.

When adopting the symplectic formalism, 
the dimension of the extended phase space typically becomes $2(n+\ell)$. 
However, 
given our focus on systems with temporal dynamics that are either periodic or quasi-periodic, 
we redefine the setup. 
By introducing a variable $\varphi \in \TT^\ell$, 
which parametrizes the external action, 
the dimension of the phase space is effectively reduced to $2n +\ell$. 
This adjustment not only simplifies the representation but also aligns naturally with the underlying dynamics, 
enabling a more efficient analysis of the system.

%%Building on this foundation, 
%%we proceed to construct solutions that preserve the quasi-periodic nature of the dynamics, 
%%ensuring consistency with the fibered structure of the phase space.
%%The dimension of the phase space becomes $2(n + \ell)$ when using symplectic or contact geometry techniques. 
%%However, 
%%given our interest in the temporal dynamics being periodic or quasi-periodic, 
%%we define a variable $\varphi \in \mathbb{T}^\ell$ so that the dimension of the phase space is $2n + \ell$.
As a result of our approach, 
the parameterization of the torus requires the last $\ell$ variables to have prescribed periodic dynamics, 
while the $n$ first components are determined by an invariance equation. 
This reduction allows us to bring down the dimension problem from $2(n+\ell)$ to $2n$. 
Moreover, %unlike previous work and as mentioned in other sources, 
we manage to obtain a symmetric expression for the torsion, 
that does not require computing derivatives of known functions but only their products,
which represents a considerable numerical advantage.
This feature makes the corresponding numerical method very efficient.%(como aquí no necesitamos las derivadas no nececitamos Cauchy)

%{\color{blue} 
The novel expression for the torsion, 
given in equation \eqref{eq:CnewT}, 
has a significant advantage over equation (4.60) in \cite{haro2019posteriori}, 
as it avoids using derivatives and is based solely on the product of functions. 
This reformulation eliminates the need for Cauchy estimates, 
leading to improved estimates of the quasi-Newton step for convergence. 
From a numerical point of view, 
this modification makes the algorithm more efficient, 
reducing computational costs and favoring its implementation in high-precision computations,
see \cite{porras2025constructiveII}.
%}
The proof follows an a-posteriori approach, 
that employs a quasi-Newton method to establish the existence of invariant tori in Hamiltonian dynamical systems. 
The iterative method starts with an initial parameterization of an approximately invariant torus,
in a space with complex strip of size $\rho$,
such that the invariance equation is approximately satisfied.
This approximation is improved by adding a correction that reduces the invariance error to an error which is quadratic with respect to the original error.
The process converges to a torus in a space  with complex strip of size $\rho_\infty$, 
under suitable Diophantine $(\gamma,\tau)$ conditions and a non-degeneracy condition on the torsion, 
which allows us to prove the persistence of the invariant tori,
if $\gamma^{-4} \rho^{-4\tau}$ times the norm is quadratic with respect to the original error.
However, 
with some additional effort, 
%%it is possible to achieve convergence if $\gamma^{-2} \rho^{-2\tau -1}$ is small enough, like in \cite{figueras2024modified}.
the smallness factor can be relaxed to $\gamma^{-2} \rho^{-2\tau -1}$, like in \cite{figueras2024modified}.
The nature of the proof is constructive so it leads quite naturally to a numerical algorithm
%%Given the nature of the proof, 
that can be implemented on a computer. 
The framework is also suitable for obtaing results in different scales of Banach spaces using hard implicit function theorems and smothing \cite{calleja2010numerically}.
%In a forthcoming publication \cite{porras2025constructiveII}, 
%we will provide numerical implementations.
%%In an upcoming publication, \cite{porras2024constructiveII} we will present numerical implementations.
The theorem states that, 
under necessary and sufficient geometrical and dynamical hypothesis, 
there exists a constant $\mathfrak{C}_1$ such that the inequality \eqref{eq:KAM:HYP} is satisfied. 
An additional advantage of this approach is that the constants $\mathfrak{C}_1$, $(\gamma, \tau)$ and $\rho$ can be explicitly calculated for each problem.
%%this constant can be calculated specifically for each particular problem, 
%%as can the constants defining the vector of diophantine frequencies and the band size in the complex torus to demonstrate convergence. 
This makes it possible to determine the accuracy needed to calculate the invariance error, 
thus guatanteeing the existence of the torus.
A great advantage of the a-posteriori approach is that the hypotheses of the proof depend on the existence of an approximately invariant solution that satisfies regularity and
non-degeneracy conditions.
%and on Diophantine constants (enganchar con las CAPs).
%However, 
%as the torus, 
%invariance error, 
%and most objects are calculated numerically, 
%it raises concerns about the reliability of these calculations.

The approximate solution can be obtained in several ways, 
such as through numerical computations, 
Lindstedt series, 
the integrable case, 
etc.
However, 
since physically relevant regimes are often far from integrability, 
the theorem provides a way to work in these settings, 
as long as a good approximation is available.
In particular, 
one can take advantage of numerically computed results to estimate the constant $\mathfrak{C}_1$, 
the invariance error, 
and the Diophantine conditions. 
This framework allows for Computer-Assisted Proofs (CAPs), 
as demonstrated in the work of \cite{figueras2017rigorous},(see \cite{celletti1987rigorous, valvo2021hamiltonian} for other approaches). 
This not only strengthens the validity of the theoretical results but also guarantees exceptional accuracy and robustness in verifying the existence of the torus.
Furthermore, there are real-world problems where relevant solutions can be found.
We note that our approach differs from other works that rely on powerful numerical integrations to validate flows and jets like \cite{kapela2017rigorous, makino2009rigorous}. 
Instead, we focus on validating the parameterization approximation and its respective estimations at each Newton step. 

The structure of this paper is as follows. 
In section \ref{sec:back:elem:cons} we present, 
the notation and tools needed to develop a KAM theorem for Hamiltonian systems with periodic or quasi-periodic time dependencies. 
This includes the use of the r-th order derivative of, 
analytic functions, 
norms, 
and the Diophantine condition. 
Additionally, 
we address nonautonomous Hamiltonians that depend periodically or quasi-periodically on time, 
focusing on invariant tori, 
the invariance equation, 
and the required dynamical and geometric properties.
In section \ref{sec:KAM:Thm} we prove the KAM theorem for Lagrangian tori by first establishing foundational lemmas. 
These lemmas provide the necessary conditions and properties to demonstrate the persistence of invariant tori in the system. 
This lays the groundwork for proving the theorem under specific dynamical and geometric conditions. 
In section \ref{sec:lemmas} we provide estimates for various geometrical and dynamical properties of an approximately invariant torus. 
These approximations are crucial to demonstrate the quadratic convergence of the method. 
%%%as well as the proof for one step of the iterative procedure. 
In section \ref{sec:proof:KAM}
we provide a proof of the KAM Theorem. 
For clarity, 
we begin by outlining the approach used to correct the parameterization of the torus. 
Specifically, 
Section \ref{ssec:qNewton} covers the approximate solution of the linearized equations within the symplectic framework constructed in Section \ref{ssec:symp}, 
establishing a method for solving the invariance equation. 
Section \ref{ssec:iter:lemmas} presents quantitative estimates for the objects derived from one iteration of this procedure. 
Section \ref{ssec:proof:KAM} discusses the convergence of the quasi-Newton method.
Finally, 
in the appendices, 
we find the derivation of a novel torsion expression, 
an auxiliary lemma to control the inverse of a matrix, 
and a compendium of constants in the KAM theorem.
These constants are explicit and are used to compute the constants $\mathfrak{C}_1$ and $\mathfrak{C}_2$, 
that appear in the KAM theorem.

\section{Background and elementary constructions}\label{sec:back:elem:cons}

\subsection{Basic notation}\label{ssec:basic:notation}
%This section will leverage and extend the notation and tools introduced in \cite{haro2019posteriori} 
%to develop a KAM theorem for Hamiltonian systems with periodic or quasi-periodic dependencies on time. 
%Such that the action of the $r-th$ order derivative of $f$ at a point $x$, 
%analytic functions and norms, and the Diophantine condition.
%By building upon the established framework from the above mentioned paper, 
%we establish a solid foundation for our analysis. 
%We will use this foundation to define essential objects which will be crucial for constructing our proof.
%
Let $U\subset \RR^m$, 
$V\subset \RR^n$ and $\rho>0$, 
the complex strip of size 
$\rho$ is 
$U_\rho \times V_\rho= 
\{ (x, y) \in \CC^m \times \CC^n : \re\,(x, y) \in U\times V \, , \, |\im\, (x, y)|<\rho \}$. 
Given two sets $X,Y \subset \CC^{m} \times \CC^{n}$, $\dist(X,Y)$ is defined as $\inf\{|x-y| \,:\, x\in X \, , \, y\in Y\}$.

We denote by $\RR^{n_1 \times n_2}$ and $\CC^{n_1\times n_2}$ the spaces of $n_1 \times n_2$ matrices with components in $\RR$ and $\CC$, 
respectively. 
We consider the following identifications: 
$\RR^m\simeq \RR^{m\times 1}$ and $\CC^m\simeq \CC^{m\times 1}$. 
We use $I_n$ and $O_n$ to represent the $n\times n$ identity and zero matrices, 
respectively. 
The $n_1\times n_2$ zero matrix is represented by $O_{n_1\times n_2}$. 
Additionally, 
the column vector $O_{n \times 1}$ is denoted by $0_n$. 
The matrix norms in both $\RR^{n_1 \times n_2}$ and $\CC^{n_1\times n_2}$ is sup-norm. 
That is to say, 
for an $n_1 \times n_2$ matrix $M$, 
we have
\[
    |M| = \max_{i= 1,\dots,n_1} \sum_{j= 1,\dots, n_2} |M_{i,j}|.
\]   
For bounded analytic function $f: \U\times \V \to \CC$, 
defined in an open set $\U \subset \CC^n$ and $\V\subset\CC^m$,
we define its norm
\[
    \norm{f}_{\U\times\V}= \sup_{(x,y)\in \U\times\V} |f(x, y)|.
\]  
The action of the $rth$-order derivative of $f$ at a point $(x,y)\in \U \times \V$ 
on a collection of (column) vectors $v_1,\dots, v_r\in \CC^m$, 
with $v_k= (v_{1k}, \dots, v_{mk})^\top$, 
is
\[
\Dif_{x}^r f(x,y) [v_1,\dots,v_r] = \sum_{\ell_1,\dots,\ell_r} \frac{\partial^r f}{\partial x_{\ell_1}\dots\partial x_{\ell_r}}(x,y)\ v_{\ell_1 1} \cdots v_{\ell_r r}, 
\]
where the indices $\ell_1,\dots,\ell_r$ run from $1$ to $m$. 
And its norm is,
\[
    \norm{\Dif_x^r f}_{\U\times\V}= \sum_{\ell_1,\dots,\ell_r} \Norm{\frac{\partial^r f}{\partial x_{\ell_1}\dots\partial x_{\ell_r}}}_{\U\times\V} \,.
\]
Given a matrix valued function 
$M:\U\times\V\subset\CC^m\times\CC^n \to \CC^{n_1\times n_2}$ (whose components, 
$M_{i,j}$, 
are analytic functions), 
a point $(x, y)\in \U \times \V$, 
and a collection of (column) vectors $v_1,\dots, v_r\in \CC^m$, 
we obtain a $n_1\times n_2$ matrix $\Dif_{x}^r M(x,y) [v_1,\dots,v_r]$ such that 
\[
\left(\Dif_x^r M(x,y) [v_1,\dots,v_r]\right)_{i,j} = \Dif_{x}^r M_{i,j}(x,y) [v_1,\dots,v_r],
 \] 
since $M $ is analytic in $\U \times \V$, 
we can define the norm.
 \[
\norm{\Dif_{x}^r M}_{\U\times\V}= \max_{i= 1,\dots,n_1} \sum_{j= 1,\dots,n_2} \Norm{\Dif_{x}^r M_{i,j}}_{\U\times\V}\,,
\]
with,
\[
\norm{\Dif_x^r M_{i,j}}_{\U\times\V}= \sum_{\ell_1,\dots,\ell_r} \Norm{\frac{\partial^r M_{i,j}}{\partial x_{\ell_1}\dots\partial x_{\ell_r}}}_{\U\times\V}\,.
\]
Therefore, we have,
\begin{align*}
&\norm{\Dif_x^r M [v_1,\dots, v_r]}_ {\U\times\V}
\leq \Norm{\Dif_x^r M}_{\U\times\V}\ \norm{v_1}_\U\cdots \norm{v_r}_{\U} \,.
\end{align*}
There is also a similar bound for the action of the transpose,
\begin{align*}
&\norm{(\Dif_x^r M [v_1,\dots, v_r])^\top}_ {\U\times\V}
\leq  \Norm{\Dif_x^r M^\top}_ {\U\times\V}\ \norm{v_1}_\U\cdots \norm{v_r}_ {\U}\,.
%\max_{j=1,\ldots,n_2} \sum_{i=1}^{n_1} \norm{\Dif_x^r M_{i,j} [v_1,\ldots,v_r]}_\U \\
%&\qquad \leq \Norm{\Dif_x^r M^\top}_\U \ \norm{v_1}_\U\cdots \norm{v_r}_\U\,.
\end{align*}
We introduce the key notation that will be used throughout this analysis.
$\TT^d = \RR^d/\ZZ^d$ is the $d$-dimensional standard torus, 
$\TT^{d}_{\rho}= \{\theta\in \CC^d / \ZZ^d : |\im\ \theta| < \rho\}$ 
is the complex strip of $\TT^d$ of width
$\rho>0$.
Let us consider a function $u:\TT^d \times \TT^{\ell}  \to \RR$. 
Analogously, for $\rho>0$, $u:\TT^d _\rho\times \TT^{\ell}_\rho \to \RR$.
We will also write the Fourier expansion of a periodic function as
\[
u(\theta,\varphi)=\sum_{k_1 \in \ZZ^d } \sum_{k_2 \in \ZZ^{\ell} } \hat u_{k_1,k_2} \ee^{2\pi \ii (k_1 \cdot \theta + k_2 \cdot \varphi)}, 
\]
\[
\hat u_{k_1,k_2} =
\int_{\TT^{\ell}} \int_{\TT^d} u(\theta, \varphi) \ee^{-2 \pi \ii (k_1 \cdot \theta + k_2 \cdot \varphi)} \dif \theta \, \dif \varphi \,,
\]
and introduce the notation $\aver{u}:=\hat u_{0,0}$ for the average. 
The notation above is extended
to $n_1 \times n_2$ matrix-valued periodic functions 
$M: \TT^d_\rho \times \TT^{\ell}_{\rho} \to\CC^{n_1\times n_2}$, 
for which $\hat M_{k_1,k_2} \in \CC^{n_1 \times n_2}$ 
denotes the Fourier coefficient with index 
$(k_1,k_2)\in \ZZ^d\times \ZZ^{\ell}$.
Now, 
we denote the $\Anal(\TT^d_\rho \times \TT^{\ell}_\rho)$ 
to represent the Banach space of holomorphic functions 
$u:\TT^d_\rho \times \TT^{\ell}_\rho \to \CC$ 
that can be continuously extended to 
$\bar \TT^d_\rho \times \bar \TT^{\ell}_\rho $ and whose values in 
$\TT^d_\rho \times \TT^{\ell}_\rho$, 
are real-analytic. 
The space $\Anal(\TT^d_\rho \times \TT^{\ell}_\rho )$ is endowed with the norm,
\[
\norm{u}_\rho = \norm{u}_{\TT^d_\rho \times \TT^{\ell}_\rho }= \max_{|\im(\theta,\varphi)|\leq \rho} |u(\theta,\varphi)|\,.
\] 
Using Cauchy estimates, 
we can determine the derivatives of these functions. 
For example, 
if $u \in \Anal(\TT^d_\rho \times \TT^{\ell}_\rho)$ and $\rho>0$, 
then its partial derivative $\pd u/\pd {x_\ell}$ belongs to 
$\Anal(\TT^d_{\rho-\delta} \times \TT^{\ell}_{\rho-\delta} )$ for any $0<\delta<\rho$ 
and we have the following bounds: 
 \[
\Norm{
\frac{\pd u}{\pd x_{\ell}}}_{\rho-\delta} \leq \frac{1}{\delta}\norm{u}_\rho, 
\qquad
\Norm{\Dif_{x} u}_{\rho-\delta} \leq \frac{d}{\delta}\norm{u}_\rho, 
\qquad
\Norm{(\Dif_{x} u)^\top}_{\rho-\delta} \leq \frac{1}{\delta}\norm{u}_\rho.
\]
For matrix valued functions 
$M: \TT^d_\rho \times \TT^{\ell}_\rho \to \CC^{n_1\times n_2}$ 
with components in $ \Anal(\TT^d_\rho \times \TT^{\ell}_\rho)$, 
we have $\norm{\Dif_{x} M}_{\rho-\delta} \leq \frac{d}{\delta} \norm{M}_\rho$. 
This result also applies to the transpose of $M$, 
$\norm{\Dif_{x} M^\top}_{\rho-\delta} \leq \frac{d}{\delta} \norm{M^\top}_\rho$. 
In the particular case 
$w:\TT^d_\rho \times \TT^{\ell}_\rho \to \CC^n \approx \CC^{n\times 1}$, 
we obtain:
\[
	\norm{\Dif_{x} w}_{\rho-\delta} \leq \frac{d}{\delta} \norm{w}_\rho,\, 
	\norm{\Dif_{x} w^\top}_{\rho-\delta} \leq \frac{d}{\delta} \norm{w^\top}_\rho \leq \frac{n d}{\delta} \norm{w}_\rho, \]
\[
	\norm{(\Dif_{x} w)^\top}_{\rho-\delta} \leq \frac{n}{\delta}\norm{w}_\rho.
\]
The set of Diophantine vectors is defined as
\begin{equation}\label{eq:def:Dioph}
\Dioph{\gamma}{\tau} =
\left\{
(\omega, \alpha) \in \RR^d\times\RR^{\ell} \, : \,
\abs{ k_1 \cdot \omega + k_2 \cdot\alpha} \geq \frac{\gamma}{|(k_1,k_2)|_1^{\tau}}
\,, 
\forall (k_1,k_2) \in\ZZ^d\times\ZZ^{\ell}\backslash\{0\}
\right\}\,.
\end{equation}
An essential condition in this theorem is the assumption that the frequency vector 
$(\omega,\alpha)$ for specific $\gamma >0$ and $\tau \geq d + \ell -1$, 
satisfies the Diophantine condition.
The quantity $|(k_1,k_2)|_1$ represents the sum of the absolute values of each component of the vector $(k_1,k_2)$.
Finally, 
we present notation regarding the cohomological equations which are central to KAM theory. 
This notation describes the relationship between a periodic or quasi-periodic function, 
\[
v(\theta, \varphi)=\sum_{k_1 \in \ZZ^d} \sum_{k _2\in \ZZ^{\ell}} \hat v_{k_{1,2}} \ee^{2\pi \ii (k_1 \cdot \theta + k_2 \cdot \varphi)},
\] 
and the frequency vector, 
$(\omega,\alpha) \in \RR^{d}\times \RR^{\ell}$,
\begin{equation}\label{eq:calL}
\Lie{\omega,\alpha} u = v- \aver{v}\,, \qquad
\Lie{\omega,\alpha} := -\left( \sum_{i=1}^d \omega_i \frac{\pd}{\pd \theta_i} +\sum_{j=1}^{\ell} \alpha_j \frac{\pd}{\pd \varphi_j} \right).
\end{equation}
The formal solution of equation \eqref{eq:calL} can be expressed as 
\begin{equation}\label{eq:small:formal}
\R{\omega,\alpha} v(\theta,\varphi) =\displaystyle \sum_{k_1 \in \ZZ^d \backslash \{0\} }\displaystyle\sum_{k _2\in \ZZ^{\ell} \backslash \{0\} } \hat u_{k_1,k_2} \ee^{2\pi
\ii (k_1 \cdot  \theta + k_2 \cdot \varphi)}, \quad \hat u_{k_1,k_2} = \frac{-\hat
v_{k_1,k_2}}{2\pi \ii  (k_1 \cdot \omega + k_2 \cdot \alpha )}.
\end{equation}
%In the case of quasi-periodic dynamics, 
%where the function $v$ has a Fourier expansion 
%$v(\theta, \varphi)=\sum_{k_1 \in \ZZ^d} \sum_{k _2\in \ZZ^{\ell}} \hat v_{k_{1,2}} \ee^{2\pi \ii (k_1 \cdot \theta + k_2 \cdot \varphi)}$,  
Notice that if $v=0$, 
then $\R{\omega, \alpha} v =0$.

The notation required for this paper, 
including analytic function spaces, 
norms of these functions and their derivatives, 
Diophantine vectors, cohomological equations, 
and related concepts
can be extended from the works  \cite{HaroCFLM16, haro2019posteriori, figueras2024modified} and is very similar to \cite{fernandez2024flow}. 
For convenience, 
we refer the reader to these references.
\subsection{QP Hamiltonian systems, symplectic bundles and invariant tori}\label{ssec:inv:tor}
%We denotate $\TT^n = \RR^n/\ZZ^n$ the $n$-dimensional torus.
In this paper, %that in an autonomous Hamiltonian system, 
we require an open set $\mani$ of $\RR^{2n}$ endowed with an exact symplectic form $\sform = \dif \aform$, 
for a certain 1-form $\aform$ called \emph{action form}. 
This setting can be adapted to the case in which $\mani$ is open subset of $\mani\subset \TT^{n_1}\times\RR^{n_2}\times\RR^n$, with $n_1 + n_2 = n$. 
We will assume that $\sform$ is exact , 
so $\mani$ is endowed with an exact symplectic structure. 
The matrix representations of $\aform$ and $\sform$ are given by the matrix-valued functions
\[
\begin{array}{rcl}
a: \mani & \longrightarrow & \RR^{2n}\,, \\
z & \longmapsto & a(z)
\,,
\end{array}
\]
and
\[
\begin{array}{rcl}
\Omega: \mani & \longrightarrow & \RR^{2n \times 2n} \,,\\
z & \longmapsto & \Omega(z)
=(\Dif a(z))^\top-\Dif a(z)
 \,,
\end{array}
\]
the non-degeneracy of $\sform$ is equivalent to $\det \Omega(z) \neq 0$ for all $z\in\mani$.
If there exists a globally defined function $h: \mani \rightarrow \RR$ such that $i_{X_\H} \sform = -\dif \H$, 
then $X_\H$ is said to be a Hamiltonian vector field with respect to the symplectic form $\sform$ 
and $\H$ is a Hamiltonian function for $X_{\H}$. 
In coordinates, %the Hamilton equation reduces to $\Omega(X,\cdot) = - \dif \H$, i.e.,
%the non-degeneracy of $\sform$ is equivalent to $\det \Omega(z) \neq 0$ for all $z\in\mani$. If there exists a function $\H : \mani\to \RR$ (globally defined) such that $i_X \sform = -\dif \H$. In coordinates, an exact symplectic vector field satisfies 
\[
    X_{\H}(z)= \Omega(z)^{-1} (\Dif \H(z))^\top\,.
\]
In the non-autonomous case, the Hamiltonian can be expressed as follows,
\[
\begin{array}{rcl}
    \H: \mani \times \RR & \longrightarrow &\RR \,, \\
(z, t) & \longmapsto & \H(z,t) \,.
\end{array}
\]
The symplectic $2$-form requires an even-dimensional phase space, 
while $\mani \times\RR$,
is odd-dimensional.   
The typical way to overcome this issue is to introduce a  on 
$\mani\times\RR$ using the form 
$\sform_{h} = \sform + \dif \H \wedge \dif t$. 
However, 
this means that the phase space dimension is $2(n+1)$, 
and if we want to study time-dependent periodic or quasi-periodic dynamics with associated frequencies $\alpha \in \RR^{\ell}$, 
the phase space dimension becomes $2(n +\ell)$,
since the symplectic form on $\mani\times\TT^\ell$ is $\sform_{h} = \sform + \dif I \wedge \dif \varphi$,
with $I$ the fictitious variables conjugated to $\varphi$. 
This would be expensive from the numerical point of view so, 
an alternative approach is to consider that the time dependence is on an 
$\ell$-dimensional torus. In other words,
\[
\begin{array}{rcl}
\H: \mani \times \TT^{\ell} & \longrightarrow &\RR \,, \\
(z, \varphi) & \longmapsto & \H(z,\varphi) \,,
\end{array}
\]
where 
$\dot \varphi = \alpha $ and 
$\alpha \in \RR^{\ell}$ are the external frequencies. %In this case, 
%the matrix representation of the symplectic form is,
%\[
%\begin{array}{rcl}
%\Omega: \mani & \longrightarrow & \RR^{2n \times 2n}\\
%z & \longmapsto & \Omega(z)
%=(\Dif a(z))^\top-\Dif a(z)
% \,.
%\end{array}
%\]
We consider the evolution operator 
\begin{equation}\label{eq:evol:oper}
\begin{aligned}
    \phi_t : \cI(z, \varphi) \subset \RR & \longrightarrow \mani  \\
    t & \longmapsto  \phi_t(z,\varphi)   \,,
\end{aligned}
\end{equation}
where $\cI(\theta, \varphi)$ is the maximal interval of exitence,
and $\phi_{t} (z,\varphi)$ satisfies
\[
\begin{split}
    \frac{\partial \phi_t}{\partial t}(z,\varphi) &= X_\H(\phi_t(z,\varphi),\varphi + t \alpha)\,,\\
    \phi_0(z,\varphi) &=z \,,
\end{split}
\]
where $X_H$ is a vector field.
We define
\[
    \D = \{ (t,z,\varphi ) | (z,\varphi) \in \mani \times \TT^\ell, t \in \cI(z,\varphi)\}
\]
Now, we consider the following flow, 

\begin{equation}\label{eq:flow}
\begin{aligned}
\tilde{\Phi}_t : \D & \longrightarrow \mani \times \TT^{\ell} \\
(z, \varphi, t) & \longmapsto  \tilde{\Phi}_{t}(z, \varphi) := \begin{pmatrix} \phi_{t}(z,\varphi) \\ \varphi + t \alpha \end{pmatrix}  \,,
\end{aligned}
\end{equation}
\begin{remark}\label{rmk:Phi:flow}
The equation \eqref{eq:flow} satisfies the flow properties.
\begin{equation*}
 \begin{split}
\tilde{\Phi}_{0} (z,  \varphi)  & = \begin{pmatrix} \phi_{0}(z,\varphi) \\ \varphi \end{pmatrix} = \begin{pmatrix} z \\ \varphi \end{pmatrix},\\
\text{ and }\\
 \tilde{\Phi}_{t+s}(z,  \varphi )&= \begin{pmatrix} \phi_{t+s}(z,\varphi) \\ \varphi + (t + s)\alpha  \end{pmatrix} = \begin{pmatrix} (\phi_{t} \circ \phi_{s} )(z,\varphi) \\ \varphi + t\alpha +s\alpha  \end{pmatrix} =  \begin{pmatrix}  \phi(\phi(z,\varphi),s),t) \\ \varphi + t\alpha +s\alpha \end{pmatrix} \\
 &=  \tilde{\Phi}_{t}\begin{pmatrix} \phi_{s}(z,\varphi) \\ \varphi + s\alpha \end{pmatrix}= \tilde{\Phi}_{s} \left( \tilde{\Phi}_{s}\begin{pmatrix} z \\ \varphi \end{pmatrix} \right) =  \tilde{\Phi}_{s} \circ \tilde{\Phi}_{s} \begin{pmatrix} z \\ \varphi \end{pmatrix} .\\
 \end{split}
 \end{equation*}
\end{remark}

Therefore, 
we can define the vector field of a non-autonomous system that depends periodically or quasi-periodically on time accordingly.
As a result, 
we can determine the vector field of a non-autonomous system that depends periodically or quasi-periodically on time,
\[
\begin{array}{rcl}
    \tilde{Z} : \mani \times \TT^{\ell} & \longrightarrow & \RR^{2n} \times \TT^{\ell}\,, \\
(z, \varphi) & \longmapsto & \tilde{Z}(z, \varphi) := \begin{pmatrix} Z_\H(z,\varphi) \\ \alpha \end{pmatrix}  \,,
\end{array}
\]
with $Z_\H(z, \varphi)= \Omega(z)^{-1} (\Dif_{z} \H(z,\varphi))^\top$.  
We look for a parameterization $\tilde{K}:\TT^{d} \times \TT^{\ell} \to \mani \times \TT^\ell $, 
with $1\leq d \leq n$, 
that satisfies the invariance equation 
\begin{equation}\label{eq:flow-inv-eq}
\tilde{\Phi}_t\left( \tilde{K}(\theta, \varphi)\right) = \tilde{K}\left( \theta + t\omega, \varphi + t\alpha \right)\,,
\end{equation} 
where $\omega \in \RR^{d}$ is a vector of internal frequencies, 
furthermore $\omega$ and $\alpha$ are non-resonant. 
In this paper, 
we are interested in finding a parameterization for the Lagrangian Torus, 
as discussed in \cite{moser1966theory,broer1996quasi}, 
i.e.,
we consider the case where $d=n$; 
hence, 
throughout this work, 
we focus exclusively on this scenario.
From the above equation, $\tilde{K}$ is of the form,
\begin{equation}\label{eq:tilde-torus}
\tilde{K}(\theta,\varphi) = \begin{pmatrix} K(\theta,\varphi) \\ \varphi \end{pmatrix} \,.
\end{equation}
We denotate by $\torus = \K\left(\TT^n, \TT^{\ell} \right)$,
the embedded submanifold of $\mani\times\TT^{\ell}$.
If $\proj_\varphi$ is the projection on $\mani$ with basepoint $\varphi$,
then $\torus_\varphi = \K\left(\TT^n, \vp \right)$ is fiber of $\torus$.
\begin{figure}[h!]
\centering         %declaration corresponding to the center environment
\includegraphics[scale=1]{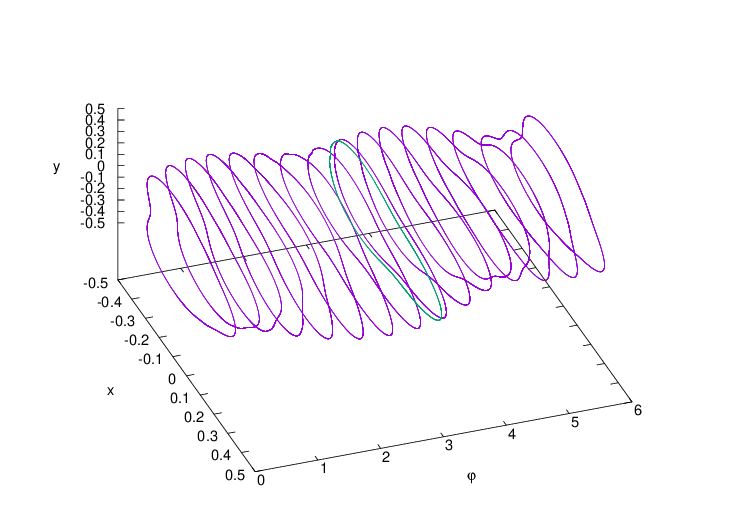}
\caption{Two-dimensional torus expressed in cylindrical coordinates, each curve is a leaf of the torus fibration. 
    The green leaf is the $K$ parameterization for a given $\varphi$, $\torus_\varphi$.}
\label{fig:twoPiTorus}
\end{figure}
Parameterization \eqref{eq:tilde-torus} has the advantage that 
$K: \TT^n \times \TT^{\ell} \to \mani$, 
now only has to solve equation,
\begin{equation}\label{eq:inv:fv}
Z_\H (K(\theta, \varphi), \varphi) = \Dif_{\theta} K(\theta,\varphi) \omega + \Dif_{\varphi} K(\theta,\varphi) \alpha \,,
\end{equation}
which is an inifitesimal version of \eqref{eq:flow-inv-eq} for the $n$-component of $\K$. 
Notice that the equation for $\varphi$ is trivial. 
Geometrically, we visualize that the manifold 
$\tilde K(\theta, \varphi)$ is foliated by 
$K(\theta, \varphi)$ for every $\varphi$, as shown in Figure \ref{fig:twoPiTorus}. 
This means that the ($n+\ell$)-dimensional manifold 
$\torus=K(\TT^n , \TT^{\ell})$ is invariant and the internal dynamics is given by the constant vector field $(\omega, \alpha)$,  
the invariance equation can be written in the form
\begin{equation}
Z_{h}(K(\theta, \varphi), \varphi) = -\Lie{\omega, \alpha} (K(\theta, \varphi))\,.
\label{eqn:invariance_equation}
\end{equation} 
If the torus is approximately invariant, 
we define the \emph{error of invariance}
$E:\TT^n \times \TT^{\ell} \rightarrow \mani$ given by
\begin{equation}\label{eq:inv:err}
E(\theta, \varphi) = Z_{h}(K(\theta, \varphi), \varphi) + \Lie{\omega, \alpha} (K(\theta, \varphi)).
\end{equation}

\begin{remark}\label{rmk:fictus:approach}
    Another way to handle time-dependent quasiperiodic Hamiltonian systems,
    \[
        \begin{array}{rcl}
            \H: \RR^{2n} \times \TT^{\ell} & \longrightarrow &\RR \,, \\
            (z, \varphi) & \longmapsto & \H(z,\varphi) \,,
        \end{array}
    \]
    is to extend the symplectic formalism by introducing fictitious variables, $\I$,
    associated with momentum, 
    which are conjugate to the quasi-periodic time variables, i.e.
\[
\begin{array}{rcl}
    \hat{\H}: \RR^{2n} \times \TT^{\ell} \times \RR^{\ell}& \longrightarrow &\RR \,, \\
    (z, \varphi, \I) & \longmapsto & \hat{\H}(z, \varphi, I) =  \H(z,\varphi) + \I \cdot \alpha \,,
\end{array}
\]
with $\alpha\in\RR^{\ell}$ external frequencies. 
    We obtain a parameterization of an invariant torus of dimension $2(n+\ell)$, as follows 
    \[
        \hat{K}(\theta, \varphi) = 
        \begin{pmatrix}
            K(\theta, \varphi) \\
            \varphi \\
            \I(\theta, \varphi)   
        \end{pmatrix} \,.
    \]

With this approach, the problem has dimension 
$2(n+\ell)$instead of $2n$. 
Moreover, 
the proof becomes harder since the involved objects have more intricate expressions. 
For further details, 
see the Appendix \ref{ssec:extended}.
%%However, 
%%since the torus is quasiperiodic on time, 
%%the function $\Phi(\theta, \varphi)$ can be expressed as $\Phi(\theta, \varphi) = \Phi_p(\theta, \varphi) + \varphi$, 
%%where $\Phi_p(\theta, \varphi)$ is periodic. 
%%Therefore,
%%using equation \eqref{eq:inv-eq-P}, $\Phi_p$ is a constant, 
%%which corresponds to a phase and can therefore be set to zero. 
%%Consequently, 
%%the dimension of the torus is reduced to $2n + \ell$.
\end{remark}

\subsection{Linearized dynamics and reducibility}\label{ssec:red:lin:eq}
Let us introduce an operator for any $m$-dimensional vector subbundle, 
parameterized by 
$V: \TT^n \times \TT^{\ell} \rightarrow \RR^{2n \times m}$, 
where $1\leq m \leq 2n$. 
The operator is defined as 
\begin{equation}\label{eq:Loper}
\Loper{V} (\theta, \varphi):= \Dif_{z}Z_{\H}(K(\theta, \varphi), \varphi)V(\theta,\varphi) + \Lie{\omega, \alpha} (V(\theta, \varphi))\,.
\end{equation}
We say that a subbundle is invariant under the linearized equations if 
$\Loper{V}(\theta,\varphi) = O_{2n \times m}$ 
for all 
$(\theta, \varphi) \in \TT^n \times \TT^\ell$.
We consider the map $L:\TT^n \times \TT^{\ell} \rightarrow \RR^{2 n\times n}$ given by
\[
L(\theta, \varphi) = \Dif_{\theta} K(\theta, \varphi)\,,
\]
we note that 
$\Loper{L}(\theta, \varphi) = O_{2n \times n}$, 
for all $(\theta,\varphi)\in \TT^{n}\times \TT^{\ell}$. 
Moreover, 
the quasi-periodicity implies that the torus is fiberwise Lagrangian, 
i.e., 
the pull-back for all $\varphi$, 
$K^*_\varphi\sform=0$ of the symplectic form on the torus vanishes. 
The matrix representation is 
\begin{equation} 
\label{def-OL}
\Omega_L(\theta, \varphi) 
=  (\Dif_{\theta} K(\theta, \varphi))^\top \Omega(K(\theta, \varphi)) \ \Dif_{\theta} K(\theta, \varphi)
=  (L(\theta, \varphi))^\top \Omega(K(\theta, \varphi)) \ L(\theta, \varphi)\,.
\end{equation}

Thus, 
$\torus$ is isotropic if $\Omega_L(\theta, \varphi)=O_{n}$. 
For each $\varphi \in \TT^{\ell}$,
$\torus_\varphi$ is a Lagrangian submanifold of $\mani$,
which is the case we study here.
For this purpose, 
we introduce several geometric constructions that allow us to define adapted symplectic frames. 
Hence, 
we will obtain a map 
$N: \TT^n \times \TT^{\ell} \rightarrow \RR^{2n\times n}$
such that the juxtaposed matrix
\[
P(\theta,\varphi) =
\begin{pmatrix} L(\theta,\varphi)  & N(\theta,\varphi) \end{pmatrix}\,,
\]
satisfies $\mathrm{rank}\, P(\theta,\varphi)=2n$ 
for every $(\theta, \varphi) \in \TT^n \times \TT^{\ell}$ and
\begin{equation}\label{eq:Psym}
P(\theta,\varphi)^\top \Omega(K(\theta,\varphi)) P(\theta,\varphi) = \begin{pmatrix} O_n &  -I_n \\ I_n  & O_n \end{pmatrix} := \Omega_0 \,,
\end{equation}
$\Omega_0$ denote the matrix representation of the Darboux form.
From equation \eqref{eq:Psym}, we derive the following geometric properties:
\begin{align}
    L(\theta,\varphi)^\top \Omega(K(\theta,\varphi)) L(\theta,\varphi)= {} & O_n \,, \label{eq:LTL} \\
    L(\theta,\varphi)^\top \Omega(K(\theta,\varphi)) N(\theta,\varphi)= {} & -I_n \,, \label{eq:LTN} \\
    N(\theta,\varphi)^\top \Omega(K(\theta,\varphi)) L(\theta,\varphi)= {} & I_n  \,, \label{eq:NTL} \\
    N(\theta,\varphi)^\top \Omega(K(\theta,\varphi)) N(\theta,\varphi)= {} & O_n \,. \label{eq:NTN}
\end{align}
In this case, 
we say that $P:\TT^n \times \TT^{\ell} \rightarrow \RR^{2n\times 2n}$ 
is a symplectic frame. 
The use of these linear coordinates allows the linearized equation to be reduce to a triangular form:
 \begin{equation*}
 \begin{split}
\Loper{P}(\theta, \varphi)&= 
     \Dif_{z} Z_\H(K(\theta,\varphi),\varphi) P(\theta,\varphi) + \Lie{\omega,\alpha} P(\theta,\varphi) \\% = P(\theta,\varphi) \Lambda(\theta,\varphi)
%%%&= \begin{pmatrix}
%%%\Dif_{z} Z_\H(K(\theta,\varphi),\varphi) L(\theta,\varphi) + \Lie{\omega,\alpha} L(\theta,\varphi) &  \Dif_{z} Z_\H(K(\theta,\varphi),\varphi) N(\theta,\varphi) + \Lie{\omega,\alpha} N(\theta,\varphi) 
%%\end{pmatrix}\\
&= \begin{pmatrix}
O_{2n\times n} &  \Dif_{z} Z_\H(K(\theta,\varphi),\varphi) N(\theta,\varphi) + \Lie{\omega,\alpha} N(\theta,\varphi) 
\end{pmatrix}\\
&= P(\theta,\varphi)\Lambda(\theta,\varphi) \,,
 \end{split}
 \end{equation*}
with
\[
\Lambda(\theta,\varphi)
= \begin{pmatrix}
O_n &  T(\theta,\varphi) \\ 
O_n  & O_n
\end{pmatrix}\,,
\]
and $T: \TT^n \times \TT^{\ell} \rightarrow \RR^{n\times n}$
\begin{align}\label{eq:OldT}
T(\theta,\varphi) = {} & N(\theta,\varphi)^\top \Omega(K(\theta,\varphi))
\left(
\Dif_{z} Z_\H(K(\theta,\varphi),\varphi)
N(\theta,\varphi)
+ \Lie{\omega,\alpha} N(\theta,\varphi)
\right) \nonumber \\ 
= {} & N(\theta,\vp)^\top \Omega(K(\theta,\vp)) \Loper{N}(\theta,\vp)\,.
\end{align}
The matrix 
$T(\theta,\varphi)$ 
is usually called the \emph{torsion matrix} and will play a role in the non-degeneracy condition. 

\begin{remark}
    %{\color{red}
    %In the context of Hamiltonian mechanics, 
    %the symplectic $2$-form 
    %$\sform$ plays a fundamental role in characterizing the geometric properties of the phase space. 
    %One of its most important properties is the Jacobi identity, 
    %which relates the Lie bracket of two Hamiltonian vector fields to the Hamiltonian vector field of their Poisson bracket. 
    %}
    In this remark, 
    we consider the derivative of the expression 
    $\Omega(z)Z_{h}(z, \varphi) = \left( D_{z}h(z, \varphi) \right)^{\top}$,
    \[
        \displaystyle \sum_{m = 1}^{2n}\left( \dfrac{\partial \Omega_{j,m}(z)}{\partial z_i} Z_{m}(z, \varphi) + \Omega_{j,m}(z)\dfrac{\partial Z_{m}(z, \varphi)}{\partial z_{i}} \right) = \dfrac{\partial^2 h(z, \varphi)}{\partial z_{i}\partial z_{j}}\,.
    \]
    %%{\color{red} and show that its components satisfy the Jacobi identity}.
    Since the Hamiltonian is analytic, 
    we have
    \begin{equation*}
        \begin{split}
            0 &= \dfrac{\partial^2 h(z, \varphi)}{\partial z_{j} \partial z_{i}} -  \dfrac{\partial^2 h(z, \varphi)}{\partial z_{i}\partial z_{j}}\\ 
            %%&=  \displaystyle \sum_{m = 1}^{2n}\left( \dfrac{\partial \Omega_{i,m}(z, \varphi)}{\partial z_j} Z_{m}(z, \varphi) + \Omega_{i,m}(z, \varphi)\dfrac{\partial Z_{m}(z, \varphi)}{\partial z_{j}} - \dfrac{\partial \Omega_{j,m}(z, \varphi)}{\partial z_i} Z_{m}(z, \varphi) - \Omega_{j,m}(z, \varphi)\dfrac{\partial Z_{m}(z, \varphi)}{\partial z_{i}} \right) \\
            &=  \displaystyle \sum_{m = 1}^{2n}\left( \left( \dfrac{\partial \Omega_{i,m}(z)}{\partial z_j} - \dfrac{\partial \Omega_{j,m}(z)}{\partial z_i} \right) Z_{m}(z, \varphi) 
            + \Omega_{i,m}(z)\dfrac{\partial Z_{m}(z, \varphi)}{\partial z_{j}} \right.\\
            & \hspace{0.5cm} \left. - \Omega_{j,m}(z)\dfrac{\partial Z_{m}(z, \varphi)}{\partial z_{i}} \right)  \\
            &=  \displaystyle \sum_{m = 1}^{2n}\left( \dfrac{\partial \Omega_{i,j}(z)}{\partial z_{m}}  Z_{m}(z, \varphi) + \Omega_{i,m}(z)\dfrac{\partial Z_{m}(z, \varphi)}{\partial z_{j}} + \Omega_{m,j}(z)\dfrac{\partial Z_{m}(z, \varphi)}{\partial z_{i}} \right)\,.
        \end{split}
    \end{equation*}
    In the last equality, we use the skew-symmetry of the symplectic form and the following geometric property, $$\dfrac{\partial \Omega_{r,s}(z)}{\partial z_{t}} + \dfrac{\partial \Omega_{s,t}(z)}{\partial z_{r}} + \dfrac{\partial \Omega_{t,r}(z)}{\partial z_{s}} = 0$$
    for any triplet $(r,s,t)$. Then
    \begin{equation}
        D_{z}\Omega(z)\left[Z_{h}(z, \varphi)\right] + \left( D_{z} Z_{h}(z, \varphi)\right)^{\top}\Omega(z) + \Omega(z)D_{z}Z_{h}(z, \varphi) = O_{2n}.
        \label{eqn:Jacobi-indentity}
    \end{equation}
    %Using the equation \eqref{eq:calL} in \eqref{eqn:Jacobi-indentity},
    %we obtain
    %\begin{equation}
    %    \Omega(z)\left[Z_{h}(z, \varphi)\right] + \left( D_{z} Z_{h}(z, \varphi)\right)^{\top}\Omega(z) + \Omega(z)D_{z}Z_{h}(z, \varphi) = O_{2n}.
    %    \label{eqn:Jacobi-indentity}
    %\end{equation}
%Now, multiplying equation \eqref{eqn:Jacobi-indentity} by $J$ and utilizing the expression $G = J^\top \Omega = -\Omega J$, we arrive at the following  expression 
%\begin{equation}
%D_{z}G(z) \left[Z_{h}(z, \varphi)\right] = - D_{z}\Omega(z) \left[Z_{h}(z, \varphi)\right] J(z) -  \Omega(z)D_{z}J(z) \left[Z_{h}(z, \varphi)\right].
%\label{eqn:metric-indentity}
%\end{equation}
\end{remark} 

\subsection{Construction of a geometrically adapted frame}\label{ssec:sym:frame}
In this section, 
we construct a symplectic frame for the Lagrangian torus on the bundle 
$\Tan_\torus\mani$ by complementing the column vectors of a map 
$L:\TT^n\times \TT^{\ell} \rightarrow \RR^{2n \times n}$, 
that parameterizes a Lagrangian subbundle. 
To this end, 
we assume that we have a map 
$N^0:\TT^n\times \TT^{\ell} \rightarrow \RR^{2n\times n}$ such that.
\begin{equation}\label{eq:cond:CaseI}
\mathrm{rank} \begin{pmatrix} L(\theta, \varphi) & N^0(\theta, \varphi) \end{pmatrix} = 2n
\quad
\Leftrightarrow
\quad
\det (L(\theta,\varphi)^\top \Omega(K(\theta,\varphi)) N^0(\theta,\varphi)) \neq 0\,,
\end{equation}
for every $(\theta, \varphi) \in \TT^n \times \TT^{\ell}$. 
Then, 
we complement the Lagrangian subspace generated by $L(\theta,\varphi)$ 
by means of a map 
$N:\TT^n \times \TT^{\ell} \rightarrow \RR^{2n \times n}$ in the following manner, 
\[
N(\theta,\varphi) = L(\theta,\varphi)A(\theta,\varphi) + \tilde N(\theta,\varphi)\,.
\]
with,
\[ 
\tilde N(\theta,\varphi) = N^0(\theta,\varphi) B(\theta,\varphi).
\]
Using \eqref{eq:Psym},
we obtain,
\[
B(\theta,\varphi)=-(L(\theta,\varphi)^\top \Omega(K(\theta,\varphi)) N^0(\theta,\varphi))^{-1} \,,
\]
and 
\[
A(\theta,\varphi)=-\frac{1}{2}(\tilde N(\theta,\varphi)^\top \Omega(K(\theta,\varphi)) \tilde N(\theta,\varphi))\,,
\]
where $A$ is a solution of
\[
A(\theta,\varphi)-A(\theta,\varphi)^\top = -\tilde N(\theta,\varphi)^\top \Omega(K(\theta,\varphi)) \tilde N(\theta,\varphi).
\]
Following \cite{HaroCFLM16}, 
assume that $\mani$ is endowed with a Riemannian metric $\gform$, 
represented in coordinates as the positive definite symmetric matrix-valued function 
$G:\mani\to\RR^{2n\times 2n}$. 
Then, we define the linear isomorphism 
$\J: \Tan \mani \to \Tan\mani$ such that 
$\sform_z(\J_z u,v)=\gform_z(u,v)$, $\forall u,v \in T_z \mani$. 
Observe also that $\J$ is antisymmetric with respect to $\gform$, 
that is, 
$\gform_z(u,\J_z v)=-\gform_z(\J_zu,v)$.
The matrix-valued function gives the matrix representation of $\J$,
$J:\mani\to \RR^{2n\times 2n}$.
Then, we have
\begin{equation}\label{eq:prop:struc1}
    \Omega^\top = -\Omega\,, \qquad G^\top = G\,, \qquad J = -\Omega^{-1}G\,, %%\qquad J^\top \Omega=G=-\Omega J\,,
\end{equation}
to represent the symplectic form using the frame given by $J$.
%we introduce the matrix-valued function $\tilde \Omega:\mani\to\RR^{2n\times 2n}$ as follows:
%\[
%\tilde \Omega (K(\theta, \varphi)) := J(K(\theta, \varphi))^{\top} \Omega(K(\theta, \varphi)) J(K(\theta, \varphi)) = G(K(\theta, \varphi)) J(K((\theta, \varphi)))\,.
%\]
Then, we choose the map $N^0(\theta, \varphi)$ as follows
\begin{equation}\label{eq:map:N0:CII}
N^0(\theta, \varphi):= J(K(\theta, \varphi)) L(\theta, \varphi)
\end{equation}
and condition \eqref{eq:cond:CaseI} is equivalent to
\[
    \det (L(\theta, \varphi)^\top G(K(\theta, \varphi)) L(\theta, \varphi)) \neq 0\,\qquad \mbox{ for all } (\theta, \varphi) \in \TT^n \times \TT^{\ell} \,. %\mbox{ and } \varphi \in \TT^{\ell} \mbox{ given}  \,.
\]
Moreover, 
the matrix $B(\theta,\varphi)$ is expressed as follows
\[
B(\theta,\varphi)= (L(\theta,\varphi)^\top G(K(\theta,\varphi)) L(\theta,\varphi))^{-1}\,.
\]
With this, we obtain the frame P.
Next, 
we derive the following expression for the torsion, 
with details provided in the Appendix~\ref{ssec:torsion}.

%One advantage of \eqref{eq:newT}, 
%is that it don't depend the Lie derivative term of $N(\theta, \varphi)$,
%and throught and object, $T_\H$, is associated to the hamiltonian and the geometrical objects, $\Omega, G, J$.
%This is, 
\begin{equation}\label{eq:newT}
\begin{split}
T(\te,\vp) 
= & - \dfrac{1}{2}\tilde N(\te,\vp)^\top \left(T_h(K(\te,\vp),\vp) +  T_h(K(\te,\vp),\vp)^\top\right)\tilde N(\te,\vp) \\
& + \tilde N(\te,\vp)^\top T_h(K(\te,\vp),\vp)^\top N(\te,\vp)  \\ 
& + N(\te,\vp)^\top T_h(K(\te,\vp),\vp)\tilde N(\te,\vp) \,,
\end{split}
\end{equation}
one advantage of \eqref{eq:newT}, 
is that it does not depend on the Lie derivative term of $N(\theta, \varphi)$. 
Moreover, 
the object 
\begin{equation}\label{eq:newTh}
\begin{split}
T_h(z, \varphi)= \Omega(z) \Big(& \Dif_z Z_\H(z, \vp) - \Dif_z J(z) \left[Z_\H(z, \vp) \right] J(z)^{-1} 
    - J(z) \Dif_z Z_\H(z, \vp) J(z)^{-1} \Big) \,,
\end{split}
\end{equation}
naturally emerges, 
linking the Hamiltonian vector field and the geometric structures $\Omega, G$ and $J$.
\begin{remark}
    Another expression to new torsion. equation, \eqref{eq:newT}
    \begin{equation}
        \begin{split}
            T&(\te,\vp) 
            =  N(\te,\vp)^\top \left( \dfrac{1}{2}  \left( T_h(K(\te,\vp),\vp) +  T_h(K(\te,\vp),\vp)^\top \right) \right) N(\te,\vp) \\
            & + A(\te,\vp)^\top L(\te,\vp)^\top T_h(K(\te,\vp),\vp)^\top N(\te,\vp)  \\ 
            & + N(\te,\vp)^\top T_h(K(\te,\vp),\vp)^\top L(\te,\vp) A(\te,\vp) \\ 
            & - A(\te,\vp)^\top L(\te,\vp)^\top \left( \dfrac{1}{2}  \left( T_h(K(\te,\vp),\vp) +  T_h(K(\te,\vp),\vp)^\top \right) \right) L(\te,\vp) A(\te,\vp) \,.
        \end{split}
    \end{equation}

\end{remark}
This new torsion expression has several advantanges that we discuss in further detail in the paper.
In Appendix \ref{ssec:torsion}, 
you can find further derivation details.
The objects computed so far, 
except for the new expression for the torsion, 
correspond to  \textbf{Case II} in \cite{HaroCFLM16}. 
There is also the special case where the isomorphism $\J$ is anti-involutive.
That is, 
$\J^2 = -\I$. 
Then, 
we say that the triplet $(\sform,\gform,\J)$ is
compatible and that $\J$ endows $\mani$ with an almost complex structure. 
This is called \textbf{Case III} in chapter $4$ of \cite{HaroCFLM16}.
In coordinates,
we have the following properties
\begin{equation*}
J^2= -I_{2n}, \qquad 
\Omega= J^\top \Omega J, \qquad G= J^\top G J. 
\end{equation*}
In this particular case, we have
\[
N^0(\theta,\varphi) = J(K(\theta,\varphi)) L(\theta,\varphi)\,,
\quad
A(\theta,\varphi) = O_n\,,
\]
\[
B(\theta,\varphi) = (L(\theta,\varphi)^\top G(K(\theta,\varphi)) L(\theta,\varphi))^{-1}\,,
\]
and, the new torsion is,
\begin{equation}
\label{eqn:newT-Case-III}
T(\te,\vp) 
= N(\te,\vp)^\top T_h(K(\te,\vp),\vp) N(\te,\vp) \,.
\end{equation}
and
\begin{equation}\label{eqn:newTh-Case-III}
\begin{split}
T_h(z,\varphi)= \Omega(z) \Big(& \Dif_z Z_\H(z , \vp) + \Dif_z J(z) \left[Z_\H(z, \vp) \right] J(z) + J(z) \Dif_z Z_\H(z, \vp) J(z) \Big) \,.
\end{split}
\end{equation}
We observe that both cases $T(\te, \vp)$ and $T_\H(\te, \vp)$ are symmetric.

\begin{remark}\label{rmk:sumarize}
To summarize, 
we have that, 
given a parameterization of a torus 
$\torus=K(\TT^n \times \TT^{\ell})$ (not necessarily invariant) and a tangent frame 
$L:\TT^n \times \TT^{\ell} \rightarrow \RR^{2n \times n}$,
the normal frame $N:\TT^n \times \TT^{\ell} \rightarrow \RR^{2n \times n}$ 
is constructed as follows:
\begin{equation}\label{eq:N}
N(\theta,\varphi):= L(\theta,\varphi) A(\theta,\varphi) + \tilde N(\theta,\varphi)\,,
\end{equation}
where,
\begin{align}
\tilde N(\theta,\varphi)={} & N^0(\theta,\varphi) B(\theta,\varphi)\,, \label{eq:tildeN} \\
N^0(\theta,\varphi)={} & J(K(\theta,\varphi)) L(\theta,\varphi) \,, \label{eq:N0} \\
B(\theta,\varphi)={}& (L(\theta,\varphi)^\top G(K(\theta,\varphi)) L(\theta,\varphi))^{-1}\,, \label{eq:B} \\
A(\theta, \varphi)={}& 
\begin{cases} -\displaystyle\frac{1}{2} (\tilde N(\theta,\varphi)^\top \Omega(K(\theta,\varphi))  \tilde N(\theta,\varphi)), & 
\text{if \textbf{Case II;}} \\
O_n\,. & \text{if \textbf{Case III.}} 
\end{cases}
\, \label{eq:A} 
\end{align}
The torsion matrix $T:\TT^n \times \TT^{\ell} \to\RR^{n\times n}$, 
given by 
\begin{align}
T(\theta, \varphi)={}& 
\begin{cases} - \dfrac{1}{2}\tilde N(\te,\vp)^\top \left(T_h(K(\te,\vp),\vp) +  T_h(K(\te,\vp),\vp)^\top\right)\tilde N(\te,\vp) & \text{if \textbf{Case II;}} \\ 
  \quad + \tilde N(\te,\vp)^\top T_h(K(\te,\vp),\vp)^\top N(\te,\vp)  & \\ 
  \quad + N(\te,\vp)^\top T_h(K(\te,\vp),\vp)\tilde N(\te,\vp),  & \\ 
  & \\ 
N(\te,\vp)^\top T_h(K(\te,\vp),\vp) N(\te,\vp)\,, & \text{if \textbf{Case III.}} 
\end{cases}
\, \label{eq:T} 
\end{align}
%\begin{equation}\label{eq:T}
%T(\theta,\varphi) = N(\theta,\varphi)^\top \Omega(K(\theta,\varphi)) \Loper{N}(\theta,\varphi)\,,
%\end{equation}
%\Omega(K(\te,\vp)) \Big(& \Dif_z Z_\H(K(\te, \vp), \vp) \\
%- & \Dif_z J(K(\te, \vp)) \left[Z_\H(K(\te, \vp), \vp) \right] J(K(\te, \vp))^{-1} \\
%- & J(K(\te, \vp) \Dif_z Z_\H(K(\te, \vp), \vp) J(K(\te, \vp)^{-1} \Big)
with, $T_\H:\mani \times \TT^{\ell} \to\RR^{2n\times 2n}$
\begin{align}
T_\H(z,\varphi)={}& 
\begin{cases} \Omega(z) \Big( \Dif_z Z_\H(z, \varphi) & \text{if \textbf{Case II;}} \\ 
  \quad -  \Dif_z J(z) \left[Z_\H(z, \varphi) \right] J(z)^{-1}& \\ 
  \quad - J(z) \Dif_z Z_\H(z, \varphi) J(z)^{-1} \Big), & \\ 
  & \\ 
\Omega(z) \Big( \Dif_z Z_\H(z, \vp)  & \text{if \textbf{Case III.}} \\
\quad + \Dif_z J(z) \left[Z_\H(z, \vp) \right] J(z)& \\
\quad + J(z) \Dif_z Z_\H(z , \vp) J(z) \Big)\,. &
\end{cases}
\, \label{eq:Th} 
\end{align}
%\begin{equation}
%\Loper{N}(\theta,\varphi)= \Dif X_\H (K(\theta,\varphi)) N(\theta,\varphi) +\Lie{\omega,\alpha} N (\theta,\varphi)\,,
%\end{equation}
%by utilizing these geometric components, 
%which quantify the infinitesimal twist of the normal bundle. 
\end{remark}
%%\break
%\textbf{Cases II} and \textbf{III} mentioned in the last remark correspond to those described in chapter 4 of \cite{HaroCFLM16}. 
We can now stated our main theorem.

\section{A-posteriori KAM theory for quasi-periodic Hamiltonian systems}\label{sec:KAM:Thm}

In order to prove the KAM theorem for Lagrangian tori, 
it is necessary to establish a series of lemmas that will be the foundation for the proof. 
These lemmas will provide the conditions and properties required to demonstrate the persistence of invariant tori in a system. 
By stating these lemmas, 
we can lay the groundwork of the proof of the KAM theorem, 
which states that an invariant torus exists for certain dynamical and geometrical conditions.

\subsection{KAM theorem for Lagrangian torus}\label{ssec:theo}

\begin{theorem}[KAM theorem for quasi-periodic Hamiltonian systems]\label{theo:KAM}
Let us consider an exact symplectic structure $\sform=\dif \aform$ and 
a Riemannian metric $\gform$ on the open set $\mani\subset\RR^{2n}\times \TT^{\ell}$. 
Let 
$\H \in \Anal\left(\mani_\rho \times \TT^\ell_\rho \right)$ 
be a Hamiltonian function that depends quasi-periodically on time, 
with $1 \leq \ell \leq n$ and 
$K:\TT^n \times \TT^{\ell} \to \mani$ be a parameterization of an Lagrangian approximately invariant torus
with frequency vector $(\omega, \alpha ) \in \RR^n \times \RR^{\ell}$, 
furthermore consider the tangent frame 
$L:\TT^n \times \TT^{\ell} \to \RR^{2n\times n}$ given by
\begin{equation}\label{eq:L:theo}
L(\theta, \varphi)=
\Dif_{\theta} K(\theta, \varphi)\,.
\end{equation}
Then, we assume that the following hypotheses hold.
\begin{itemize}
\item [$H_1$]
The global objects can be analytically extended to the complex domain $\B \subset \CC^{2n}$, and there are constants that quantify the control of their analytic norms. 
For the geometric structures $\sform, \gform, \J$ in $\B$, 
the matrix functions
$\Omega,  G, J : \B \rightarrow \CC^{2n \times 2n}$
satisfy:
\begin{align*}
&\norm{\Omega}_{\B\times\TT^{\ell}} \leq \cteOmega\,,
&& \norm{\Dif_z \Omega}_{\B\times\TT^{\ell}} \leq \cteDOmega\,, 
&&  \\
%&  \norm{\tilde \Omega}_{\B\times\TT^{\ell}} \leq \ctetOmega\,, 
%&& \norm{\Dif_z \tilde\Omega}_{\B\times\TT^{\ell}} \leq \cteDtOmega\,,
%&& \norm{\Dif^2_z \tilde \Omega}_{\B\times\TT^{\ell}} \leq \cteDDtOmega\,, \\
&\norm{G}_{\B\times\TT^{\ell}} \leq \cteG\,,
&& \norm{\Dif_z G}_{\B\times\TT^{\ell}} \leq \cteDG\,, 
&& \norm{\Dif^2_z G}_{\B\times\TT^{\ell}} \leq \cteDDG\,,  \\
& \norm{J}_{\B\times\TT^{\ell}} \leq \cteJ\,, 
&& \norm{\Dif_z J}_{\B\times\TT^{\ell}} \leq \cteDJ\,,
&& \norm{\Dif^2_z J}_{\B\times\TT^{\ell}} \leq \cteDDJ\,, \\
& \norm{J^{\top}}_{\B\times\TT^{\ell}} \leq \cteJT\,, && \norm{\Dif_z J^\top}_{\B\times\TT^{\ell}} \leq \cteDJT \,,\\ %%&& {\color{blue}\norm{\Dif^2_z J^\top}_{\B\times\TT^{\ell}} \leq \cteDDJT\,}\,,\\
& \norm{J^{-1}}_{\B\times\TT^{\ell}} \leq \cteJInv\,, \\ %&& {\color{blue}\norm{\Dif_z J^{-1}}_{\B\times\TT^{\ell}} \leq \cteDJInv\,},\\
& \norm{J^{-\top}}_{\B\times\TT^{\ell}} \leq \cteJInvT\,. %&& {\color{blue}\norm{\Dif_z J^{-\top}}_{\B\times\TT^{\ell}} \leq \cteDJInvT\,},
&& 
\end{align*}
For the Hamiltonian $\H: \B \times \TT^{\ell} \rightarrow \CC$ and its corresponding vector field $Z_\H: \B \times \TT^{\ell} \rightarrow \CC^{2n}$ and $T_\H:\mani \times \TT^{\ell} \to\CC^{2n}$ given by \eqref{eq:newTh},we have:
\begin{align*}
&  \norm{\Dif_z \H}_{\B\times\TT^{\ell}} \leq \cteDH\,,
&& \\
& \norm{Z_\H}_{\B\times\TT^{\ell}} \leq \cteXH\,, 
&& \norm{\Dif_z Z_\H}_{\B\times\TT^{\ell}} \leq \cteDXH\,, \\
& \norm{T_\H}_{\B\times\TT^{\ell}} \leq \cteTH\,, 
&& \norm{\Dif_z T_\H}_{\B\times\TT^{\ell}} \leq \cteDTH\,, \\
& \norm{T^\top_\H}_{\B\times\TT^{\ell}} \leq \cteTHT\,, 
&& \norm{\Dif_z T^\top_\H}_{\B\times\TT^{\ell}} \leq \cteDTHT\,, \\
& \norm{\Dif^2_z Z_\H}_{\B\times\TT^{\ell}} \leq \cteDDXH \,, 
&& \norm{\Dif_z Z_\H^\top}_{\B\times\TT^{\ell}} \leq \cteDXHT\,.\\
%& {\color{blue} \norm{\Dif^2_z Z_\H^\top}_{\B\times\TT^{\ell}} \leq \cteDDXHT }\,. 
\end{align*}
%%%For the first integrals 
%%%$p:\B\to \CC^{n-d}$ and the corresponding vector fields 
%%%$X_p: \B \rightarrow \CC^{2n \times (n-d)}$, we have:
%%%\begin{align*}
%%%&\norm{\Dif p}_{\B} \leq \cteDp \,,
%%%&& \norm{\Dif p^\top}_{\B} \leq \cteDpT \,,
%%%&& \\
%%%& \norm{X_p}_{\B} \leq \cteXp\,,
%%%&& \norm{\Dif X_p}_{\B} \leq \cteDXp\,,
%%%&& \norm{\Dif^2 X_p}_{\B} \leq \cteDDXp\,, \\
%%%& \norm{X_p^\top}_{\B} \leq \cteXpT\,, 
%%%&& \norm{\Dif X_p^\top}_{\B} \leq \cteDXpT\,, 
%%%&& \norm{\Dif^2 X_p^\top}_{\B} \leq \cteDDXpT\,.
%%%\end{align*}

%%%%For the first integral $c: \B \to \CC$ we have
%%%\[
%%%\norm{\Dif c}_{\B} \leq \cteDc \,.
%%%\]
\item [$H_2$] The parameterization $K$ is analytic in a complex strip 
$\TT^n_\rho\times\TT^{\ell}_\rho$, 
with $\rho>0$, 
which is contained in the global domain:
\[
    \dist (K(\TT^n_\rho \times \TT^\ell_\rho),\pd (\B \times \TT^\ell_\rho))>0.
\]
Moreover, 
the components of $K$ and $\Dif_{\theta} K$ belong to 
$\Anal(\TT^n_\rho \times \TT^\ell_\rho)$, 
and there are constants $\sigmaDK$ and $\sigmaDKT$ such that
\[
\norm{\Dif_{\theta} K}_{\rho} < \sigmaDK\,, \qquad
\norm{(\Dif_{\theta} K)^\top}_{\rho} < \sigmaDKT\,.
\]
\item [$H_3$] We assume that $L(\theta, \varphi)$ given by \eqref{eq:L:theo} 
has maximum rank for every $(\theta, \varphi) \in \bar\TT_\rho^n \times \bar\TT_\rho^\ell$, 
given. 
Moreover,  there exists a constant
$\sigmaB$ such that
\[
\norm{B}_\rho < \sigmaB,
\]
where $B(\theta, \varphi)$ is given by \eqref{eq:B}.
\item [$H_4$] 
There
exists a constant $\sigmaT$ such that
\[
\abs{\aver{T}^{-1}} < \sigmaT,
\]
where $T(\theta,\varphi)$ is given by \eqref{eq:T}.
\item [$H_5$] 
The frequency $(\omega, \alpha)$ belongs to $\Dioph{\gamma}{\tau}$, 
given by \eqref{eq:def:Dioph}, 
for certain $\gamma >0$ and $\tau \geq n + \ell -1$.
\end{itemize}
Under the above hypotheses, 
for each $0<\rho_\infty<\rho$ there exists a constant $\mathfrak{C}_1$ such that, 
if the error of invariance
\begin{equation}\label{eq:invE}
E(\theta,\varphi)=Z_\H(K(\theta,\varphi))+ \Lie{\omega, \alpha} K(\theta, \varphi), 
\end{equation}
satisfies
\begin{equation}\label{eq:KAM:HYP}
\frac{\mathfrak{C}_1 \norm{E}_\rho}{\gamma^4 \rho^{4 \tau}} < 1\,,
\end{equation}
then there exists an invariant torus 
$\torus_\infty = K_\infty(\TT^n\times \TT^{\ell})$ with frequencies $(\omega,\alpha)$, 
satisfying $K_\infty \in \Anal(\TT^{n}_{\rho_\infty}\times \TT^{\ell}_{\rho_\infty})$ 
and
\[
\norm{\Dif_{\theta} K_\infty}_{\rho_\infty} < \sigmaDK\,,
\qquad
\norm{(\Dif_{\theta} K_\infty)^\top}_{\rho_\infty} < \sigmaDKT\,, \]
%\qquad
\[
\dist(K_\infty(\TT^n_{\rho_\infty} \times \TT^{\ell}_{\rho_\infty}),\pd (\B \times \TT^\ell_\rho)) > 0\,.
\]
Furthermore, 
the objects are close to the original ones: 
there exist  a constant $\mathfrak{C}_2$ such that
\begin{equation}\label{eq:close}
\norm{K_\infty - K}_{\rho_\infty} < \frac{\mathfrak{C}_2 \norm{E}_\rho}{\gamma^2 \rho^{2\tau}}\,,
%\qquad
%\abs{\aver{c \circ K_\infty} - \aver{c \circ K}} < \frac{\mathfrak{C}_3 \norm{E}_\rho}{\gamma^2 \rho^{2\tau}}\,,
\end{equation}
The constants $\mathfrak{C}_1$ and $\mathfrak{C}_2$ are given explicitly in 
Appendix \ref{ssec:consts}.
\end{theorem}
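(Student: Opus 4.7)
The plan is to prove Theorem~\ref{theo:KAM} by a quasi-Newton iteration that exploits the symplectic frame constructed in Section~\ref{ssec:sym:frame}. At each step we have an approximate parameterization $K$ with invariance error $E = Z_\H(K,\varphi) + \Lie{\omega,\alpha} K$, and we look for a correction $\Delta K$ so that the new parameterization $K_+ = K + \Delta K$ has a new error $E_+$ of size $\cO(\|E\|_\rho^2)$ (up to a small loss $\delta$ in the analyticity strip). Linearising the invariance equation around $K$ gives
\begin{equation*}
\Dif_z Z_\H(K,\varphi)\,\Delta K + \Lie{\omega,\alpha}\Delta K = -E,
\end{equation*}
i.e.\ $\Loper{\Delta K} = -E$. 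The idea is to solve this equation only approximately, in such a way that the residual is quadratic in $E$.

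The first main step is to use the symplectic frame $P = (L\ N)$ of Remark~\ref{rmk:sumarize} to triangularise the linearised operator. Writing $\Delta K = L\,\xi^L + N\,\xi^N$ and using the identity $\Loper{P} = P\Lambda$ with $\Lambda$ upper triangular and torsion block $T$, the equation decouples into two cohomological equations of the form $\Lie{\omega,\alpha}\xi^N = \eta^N$ and $\Lie{\omega,\alpha}\xi^L + T\,\xi^N = \eta^L$, where $(\eta^L,\eta^N)$ is obtained from $E$ by projection along $P$. Crucially, $\Loper{L}$ is not exactly zero (since $K$ is only approximately invariant), so these projections are only approximately correct; the error introduced is controlled by products of $\|E\|_\rho$ with norms of the geometric objects $\Omega,G,J$, and the Lagrangian identity $\Omega_L = O_n$ holds only up to size $\cO(\|E\|_\rho)$. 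This ``approximate reducibility'' is what allows the Newton step to be quadratically convergent even though the frame is built from an inexact object.

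The second main step is the solvability of the two cohomological equations. Each equation is of the form $\Lie{\omega,\alpha}\xi = \eta - \aver{\eta}$, which can be solved via the operator $\R{\omega,\alpha}$ of \eqref{eq:small:formal}, with Rüssmann-type estimates giving $\|\xi\|_{\rho-\delta} \leq C\gamma^{-1}\delta^{-\tau}\|\eta\|_\rho$ under the Diophantine assumption $H_5$. The averages of $\eta^N$ and $\eta^L$ are obstructions to solving these equations; the non-degeneracy hypothesis $H_4$, $|\aver{T}^{-1}| < \sigmaT$, is used to choose the average of $\xi^N$ (and the free average of $\xi^L$) so that both obstructions are killed, up to a residual quadratic in $\|E\|_\rho$. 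Here the new torsion expression \eqref{eq:T} is decisive: since it involves only products of $N,\tilde N$ and $T_\H\circ K$ and no Lie derivative of $N$, its norm can be estimated without Cauchy-type loss of domain, yielding the improved $\gamma^{-4}\rho^{-4\tau}$ factor in \eqref{eq:KAM:HYP} rather than a higher power. Combining the two solutions gives a single correction $\Delta K$ with $\|\Delta K\|_{\rho-\delta} \lesssim \gamma^{-2}\delta^{-2\tau}\|E\|_\rho$, which is the source of \eqref{eq:close} after summing the telescoping series.

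The third step is to propagate one Newton iteration into an infinite scheme. One sets $K_+ = K + \Delta K$ on the smaller strip $\TT^n_{\rho-\delta}\times\TT^\ell_{\rho-\delta}$, recomputes $L_+,N_+,B_+,A_+,T_+$ using the formulas of Remark~\ref{rmk:sumarize}, and estimates how the hypotheses $H_2$--$H_4$ degrade: the bounds $\sigmaDK,\sigmaDKT,\sigmaB,\sigmaT$ are kept uniform by absorbing the $\cO(\|E\|_\rho)$ changes into a geometric series, while the distance hypothesis in $H_2$ is preserved because $\|\Delta K\|_{\rho-\delta}$ is summable. Choosing a sequence $\delta_m = \rho_0\,2^{-m-1}$ with $\rho_0 = \rho - \rho_\infty$ and $\rho_{m+1} = \rho_m - \delta_m$, the quadratic estimate $\|E_{m+1}\|_{\rho_{m+1}} \leq C\gamma^{-4}\delta_m^{-4\tau}\|E_m\|_{\rho_m}^2$ yields convergence of $\{K_m\}$ in $\Anal(\TT^n_{\rho_\infty}\times\TT^\ell_{\rho_\infty})$ provided the smallness condition \eqref{eq:KAM:HYP} holds with a suitable $\mathfrak{C}_1$. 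The hardest part of the proof, and the place where the bulk of the work lies, is the quantitative bookkeeping: deriving uniform bounds $\CDeltaK,\CDeltaL,\CDeltaN,\CDeltaT,\ldots$ for the changes of every geometric and dynamical object along the iteration, and verifying that the non-degeneracy constants $\sigmaB,\sigmaT$ are preserved via the auxiliary inverse-matrix lemma of the appendix; these estimates are then collected to produce the explicit $\mathfrak{C}_1$ and $\mathfrak{C}_2$.
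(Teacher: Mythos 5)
Your proposal follows the same route as the paper: quasi-Newton correction $\Delta K = P\xi$, approximate symplectic reduction of the linearised operator to the upper-triangular $\Lambda$, solution of the two resulting cohomological equations via R\"ussmann estimates with the average of $\xi^N$ fixed through $\aver{T}^{-1}$, the observation that the product-form torsion \eqref{eq:T} avoids a Cauchy loss of domain, and a geometric-series argument to propagate the uniform non-degeneracy bounds and obtain convergence on $\TT^n_{\rho_\infty}\times\TT^\ell_{\rho_\infty}$. The only cosmetic discrepancy is the choice of domain-loss sequence (the paper shrinks by $3\delta_s$ per step with $\delta_s=\delta_0/a_1^s$ parametrised by $a_1,a_2,a_3$, rather than your dyadic $\delta_m=\rho_0 2^{-m-1}$), which does not affect the structure of the argument.
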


%%%\begin{remark}
%%%If $d=n$ then there are no additional first integrals
%%%and we recover the classical KAM theorem for Lagrangian tori.
%%%The corresponding estimates follow by taking zero the constants
%%%$\cteDp=0$,
%%%$\cteDpT=0$,
%%%$\cteXp=0$,
%%%$\cteXpT=0$,
%%%$\cteDXp=0$,
%%%$\cteDXpT=0$,
%%%$\cteDDXp=0$, and
%%%$\cteDDXpT=0$.
%%%Thus, as a
%%%by-product, we obtain optimal quantitative estimates for the
%%%KAM theorem for flows stated in \cite{GonzalezJLV05}.
%%%\end{remark}

\begin{remark}\label{rmk:canon-case}
In the \emph{canonical case}, we have $\Omega=\Omega_0$, $G = I_{2n}$,
and $J=\Omega_0$. Hence, we have 
$\cteOmega=1$,
$\cteDOmega=0$, 
%$\ctetOmega=1$, 
%$\cteDtOmega=0$,
%$\cteDDtOmega=0$, 
$\cteG=1$,
$\cteDG=0$, 
$\cteDDG=0$,
$\cteJ=1$,
$\cteDJ=0$,
$\cteDDJ=0$,
$\cteJT=1$, and
$\cteDJT=0$.
\end{remark}

\section{Some lemmas to control approximate geometric properties}
\label{sec:lemmas}

In this section,
we include  the lemmas that are need to prove the main theorem.
%This section offers estimates for managing distinct geometric properties 
%of an approximately invariant torus. 
%To prevent redundancy, 
%we establish a unified framework throughout the section, 
%building upon the assumptions outlined in Section \ref{ssec:theo}.% and \ref{ssec:theo:iso}.
%
\subsection{Estimates for cohomological equations}

The proof of the following lemma can be found in \cite{russmann1975optimal, russmann1976optimal}, 
as it is a recurring result in the context of KAM theorems. 
For refined estimates on the constant $c_R$ in terms of $\delta$, 
see \cite{figueras2018sharpness}.

\begin{lemma}[R\"ussmann estimates]\label{lem:Russmann}
Let $(\omega,\alpha) \in \Dioph{\gamma}{\tau}$ for some $\gamma>0$ and 
$\tau \geq n + \ell -1$. 
Then, 
for any $v \in \Anal(\TT^n_\rho\times\TT^{\ell}_\rho)$, 
with $\rho>0$, 
there exists a unique zero-average solution of $\Lie{\omega,\alpha} u = v -\aver{v}$, 
denoted by $u=\R{\omega, \alpha}v$. 
Moreover, 
for any $0<\delta<\rho$ we have that 
$u \in \Anal(\TT^n_{\rho-\delta} \times \TT^{\ell}_{\rho-\delta})$ and the estimate
\[
\norm{u}_{\rho-\delta} \leq \frac{c_R}{\gamma \delta^\tau}
\norm{v}_\rho\,,
\]
where $c_R$ is a constant that depends on $n+\ell$, $\tau$ and possibly on $\delta$.
\end{lemma}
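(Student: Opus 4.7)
The plan is to solve the cohomological equation mode by mode in Fourier space and then control the resulting series through the Diophantine and Cauchy estimates. First I would expand $v \in \Anal(\TT^n_\rho \times \TT^\ell_\rho)$ as
\[
v(\theta,\varphi) = \sum_{k=(k_1,k_2)\in\ZZ^{n+\ell}} \hat v_k \, \ee^{2\pi\ii k\cdot(\theta,\varphi)},
\]
and similarly write $u = \sum_k \hat u_k \, \ee^{2\pi\ii k\cdot(\theta,\varphi)}$. Since $\Lie{\omega,\alpha}$ acts on the Fourier mode indexed by $k$ as multiplication by $-2\pi\ii(k_1\cdot\omega + k_2\cdot\alpha)$, the equation $\Lie{\omega,\alpha}u = v - \aver{v}$ forces
\[
\hat u_k = \frac{-\hat v_k}{2\pi\ii(k_1\cdot\omega + k_2\cdot\alpha)} \quad \text{for } k\neq 0,
\]
and the zero-average condition pins down $\hat u_0 = 0$. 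The Diophantine hypothesis guarantees that the denominator never vanishes for $k\neq 0$, so this prescription defines the unique zero-average formal solution.

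Next I would turn the formal solution into a genuine analytic function with the claimed bound. The Cauchy estimate for Fourier coefficients of a function in $\Anal(\TT^n_\rho\times\TT^\ell_\rho)$ gives $|\hat v_k| \leq \norm{v}_\rho\, \ee^{-2\pi|k|_1\rho}$, while the Diophantine bound yields
\[
|\hat u_k| \leq \frac{\norm{v}_\rho}{2\pi\gamma}\, |k|_1^\tau\, \ee^{-2\pi|k|_1\rho}.
\]
Summing the series on $\TT^n_{\rho-\delta}\times\TT^\ell_{\rho-\delta}$,
\[
\norm{u}_{\rho-\delta} \leq \sum_{k\neq 0} |\hat u_k|\, \ee^{2\pi|k|_1(\rho-\delta)}
\leq \frac{\norm{v}_\rho}{2\pi\gamma} \sum_{k\neq 0} |k|_1^\tau\, \ee^{-2\pi|k|_1\delta}.
\]
This reduces the problem to controlling the scalar series $S(\delta) := \sum_{k\neq 0} |k|_1^\tau \ee^{-2\pi|k|_1\delta}$.

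The main technical step, and the place where I would expect the delicate work to lie, is the estimate of $S(\delta)$. Grouping lattice points by $|k|_1 = m$, the number of vectors $k \in \ZZ^{n+\ell}$ with $|k|_1 = m$ is bounded by a polynomial of degree $n+\ell-1$ in $m$, so $S(\delta)$ is controlled by $\sum_{m\geq 1} m^{\tau+n+\ell-1}\ee^{-2\pi m\delta}$, and standard integral comparison gives $S(\delta) \leq c(n+\ell,\tau)\,\delta^{-(\tau+n+\ell)}$. Absorbing the extra $\delta^{-(n+\ell)}$ into the constant $c_R$, which the statement permits to depend on $\delta$, yields the claimed bound. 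To recover a sharper form in which $c_R$ is independent of $\delta$, one invokes Rüssmann's refined grouping argument based on partitioning resonant modes according to the dyadic size of the divisor $k_1\cdot\omega + k_2\cdot\alpha$, as carried out in \cite{russmann1975optimal,russmann1976optimal}; with the looser reading of the hypothesis, the straightforward geometric-series estimate suffices.
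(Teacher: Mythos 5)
The paper does not actually prove Lemma~\ref{lem:Russmann}: it cites the Rüssmann papers and \cite{figueras2018sharpness} and moves on, so there is no ``paper's proof'' to compare against. What you supply is the standard elementary derivation by Fourier expansion, Cauchy decay of the coefficients, the Diophantine lower bound on the small divisors, and a crude counting of lattice points at fixed $\ell^1$-norm. That chain of estimates is correct as far as it goes: the uniqueness claim follows because the Diophantine condition makes the multiplier $2\pi\ii(k_1\cdot\omega+k_2\cdot\alpha)$ nonvanishing for $k\neq 0$, and the series bound
\[
\norm{u}_{\rho-\delta}\leq \frac{\norm{v}_\rho}{2\pi\gamma}\sum_{k\neq 0}|k|_1^\tau\,\ee^{-2\pi|k|_1\delta}
\leq \frac{C(n+\ell,\tau)}{\gamma\,\delta^{\tau+n+\ell}}\norm{v}_\rho
\]
is valid. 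Because the lemma explicitly allows $c_R$ to depend on $\delta$, absorbing the extra $\delta^{-(n+\ell)}$ into $c_R$ formally satisfies the statement, and you are careful to flag that this is a looser reading.

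The substantive difference from Rüssmann's argument (and from \cite{figueras2018sharpness}, which is precisely about quantifying the $\delta$-dependence of $c_R$) is that the naive mode-by-mode counting wastes a factor of $\delta^{-(n+\ell)}$. Rüssmann's refined argument groups modes according to the size of the small divisor rather than the size of $|k|_1$, exploiting that for each magnitude scale only a sparse subset of modes can have a small divisor of that size; this recovers the optimal rate $\delta^{-\tau}$ with $c_R$ uniformly bounded in $\delta$. You correctly identify this as the extra idea needed and defer to the cited references for it. One caveat worth keeping in mind: in Section~\ref{ssec:proof:KAM} the constants from the tables are evaluated once at $\delta_0=\rho_0/a_3$ and reused, so a $c_R$ that blows up as $\delta\to 0$ would not immediately break the induction, but it would substantially degrade the threshold $\mathfrak{C}_1$ and hence the practical applicability of the theorem, which is exactly why the paper points to the sharp version of the estimate.
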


\subsection{Approximate lagrangianity}

\begin{lemma}[Approximate lagrangianity]\label{lem:isotrop} 
Let us consider the setting of Theorem \ref{theo:KAM}. 
    Let us consider $\Omega_L:\mani \times \TT^{\ell} \to \RR^{2n \times 2n}$, 
the matrix representation of the pull-back on $K_\varphi^*\sform$ . 
We have
\begin{equation}\label{eq:OmegaK:aver}
\aver{\Omega_L} = O_n\,,
\end{equation}
and the following estimate holds:
\begin{equation}\label{eq:estOmegaK}
\norm{\Omega_L}_{\rho-2\delta} \leq \frac{\COmegaL }{\gamma \delta^{\tau+1}}\norm{E}_\rho\,,
\end{equation}
where the constant $\COmegaL$ is provided in Table \ref{tab:constants:all}.
\end{lemma}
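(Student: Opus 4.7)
The plan is to establish the two claims separately: a purely geometric argument for $\aver{\Omega_L}=O_n$ based on exactness, and a quantitative argument for \eqref{eq:estOmegaK} through cohomological inversion of $\Lie{\omega,\alpha}\Omega_L$.

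For the vanishing of the average I would use that $\sform=\dif\aform$ is exact, so for each fixed $\varphi$ the matrix $\Omega_L(\cdot,\varphi)$ is the coordinate representation of the pull-back $K_\varphi^{\ast}\sform=K_\varphi^{\ast}\dif\aform=\dif(K_\varphi^{\ast}\aform)$, which is an exact $2$-form on $\TT^n$. Its entries are therefore of the form $\partial_{\theta_i}\beta_j-\partial_{\theta_j}\beta_i$ with $\beta_i$ the components of $K_\varphi^{\ast}\aform$, and each such entry has zero $\theta$-average on $\TT^n$. Integrating next in $\varphi$ yields $\aver{\Omega_L}=O_n$.

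For the quantitative estimate the key step is to compute $\Lie{\omega,\alpha}\Omega_L$ and exploit the Jacobi-type identity \eqref{eqn:Jacobi-indentity} to eliminate all terms that are not of order $E$. Since $\Lie{\omega,\alpha}$ commutes with $\Dif_\theta$, the invariance equation $\Lie{\omega,\alpha}K=E-Z_\H(K,\varphi)$ immediately gives
\[
\Lie{\omega,\alpha}L=\Dif_\theta E-\Dif_z Z_\H(K,\varphi)\,L,\qquad \Lie{\omega,\alpha}(\Omega(K))=\Dif_z\Omega(K)\bigl[E-Z_\H(K,\varphi)\bigr].
\]
Applying \eqref{eqn:Jacobi-indentity} at $z=K(\theta,\varphi)$ rewrites $\Dif_z\Omega(K)[Z_\H(K,\varphi)]$ as a combination of $(\Dif_z Z_\H)^\top\Omega(K)$ and $\Omega(K)\Dif_z Z_\H(K,\varphi)$. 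Substituting this into the Leibniz expansion of $\Omega_L=L^\top\Omega(K)L$, the $\Dif_z Z_\H$ contributions cancel in pairs and only the $E$-contributions survive, giving
\[
\Lie{\omega,\alpha}\Omega_L=(\Dif_\theta E)^\top\Omega(K)L+L^\top\Dif_z\Omega(K)[E]\,L+L^\top\Omega(K)\Dif_\theta E,
\]
an expression linear in $E$ and $\Dif_\theta E$ with coefficients controlled by Hypotheses $H_1$--$H_2$.

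To conclude I would observe that, since $\aver{\Omega_L}=O_n$ and $\aver{\Lie{\omega,\alpha}\Omega_L}=O_n$ holds automatically, Lemma \ref{lem:Russmann} yields $\Omega_L=\R{\omega,\alpha}(\Lie{\omega,\alpha}\Omega_L)$ with a width loss $\delta$ producing a factor $c_R/(\gamma\delta^\tau)$. Bounding the right-hand side above by $\cteOmega$, $\cteDOmega$, $\sigmaDK$, $\sigmaDKT$, and absorbing the derivative $\Dif_\theta E$ through a Cauchy estimate on a further strip of loss $\delta$ (contributing a factor $1/\delta$) gives the advertised estimate $\norm{\Omega_L}_{\rho-2\delta}\le \COmegaL/(\gamma\delta^{\tau+1})\,\norm{E}_\rho$, with $\COmegaL$ assembled explicitly from those constants. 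The only genuinely delicate point is the algebraic cancellation of the $\Dif_z Z_\H$ terms via \eqref{eqn:Jacobi-indentity}; once that bookkeeping is clean, the R\"ussmann and Cauchy estimates are standard.
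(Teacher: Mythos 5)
Your proposal matches the paper's proof essentially point for point: exactness of $\sform$ gives the vanishing $\theta$-average of $\Omega_L$ entry by entry (and hence the joint average), the Lie derivative of $\Omega_L = L^\top(\Omega\circ K)L$ is expanded via the approximate invariance equation, the Jacobi-type identity \eqref{eqn:Jacobi-indentity} cancels all $\Dif_z Z_\H$ contributions leaving only terms linear in $E$ and $\Dif_\theta E$, and the combination of a Cauchy estimate (loss $\delta$, factor $1/\delta$) with R\"ussmann (loss $\delta$, factor $c_R/(\gamma\delta^\tau)$) yields \eqref{eq:estOmegaK}. The only cosmetic difference is that you phrase the Cauchy loss as coming after the R\"ussmann step whereas the paper applies it first; the resulting bound on $\TT^n_{\rho-2\delta}\times\TT^\ell_{\rho-2\delta}$ is identical.
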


\begin{proof}
    To prove that $\aver{\Omega_L} = O_n$, 
    we recall that $\Omega_{L}(z, \varphi)$ is the matrix representation of 
    $K^*_\vp \sform = K^*\sform = \mathrm{d}(K^*\aform)$, 
    so
    \begin{equation*}
        \begin{split}
            \Omega_{L}(K(\theta,\varphi), \varphi) & = \left(D_{\theta}K(\theta, \varphi) \right)^{\top} \Omega\left( K(\theta, \varphi)\right)D_{\theta}K(\theta, \varphi)  \\
            %& = \left(D_{\theta}K(\theta, \varphi) \right)^{\top} \left( \left(D_{z} a(K(\theta, \varphi)) \right)^{\top} - D_{z} a(K(\theta, \varphi)) \right)D_{\theta}K(\theta, \varphi)  \\
            & = \left( D_{\theta}(a(K(\theta, \varphi)))\right)^{\top} D_{\theta}K(\theta, \varphi) -  \left( D_{\theta}K(\theta, \varphi) \right)^{\top} D_{\theta}(a(K(\theta, \varphi)))  \\
        \end{split}
    \end{equation*} 
    and so, 
    the components of 
    $\Omega_{L}(\theta, \varphi)$ are 
    \begin{equation*}
        \begin{split}
        \left( \Omega_L\right)_{i,j} (K(\theta,\varphi), \varphi)  = %\displaystyle \sum_{m=1}^{2n}\left( \dfrac{\partial(a_{m}(K(\theta, \varphi)))}{\partial \theta_{i}} \dfrac{\partial K_{m}(\theta, \varphi)}{\partial \theta_{j}} - \dfrac{\partial(a_{m}(K(\theta, \varphi)))}{\partial \theta_{j}} \dfrac{\partial K_{m}(\theta, \varphi)}{\partial \theta_{i}} \right) \\
            \displaystyle \sum_{m = 1}^{2n} & \left(\dfrac{\partial }{\partial \theta_{i}} \left(a_{m}(K(\theta, \varphi)) \dfrac{\partial K_{m}(\theta, \varphi)}{\partial \theta_{j}} \right) \right. \\
 &- \left. \dfrac{\partial}{\partial \theta_{j}}\left( a_{m}(K(\theta, \varphi))\dfrac{\partial K_{m}(\theta, \varphi)}{\partial\theta_{i}} \right) \right).
            \end{split}
    \end{equation*} 
    We observe that the components of 
    $\Omega_L(K(\theta,\varphi), \varphi)$ 
	consist of sums of derivatives of periodic functions with respect to the variable 
	$\theta$,
    therefore $\aver{\Omega_L}=O_n$.
    We can now proceed to prove inequality \eqref{eq:estOmegaK}. 
    To do this, 
    we consider the approximately invariant case, 
    using the invariant error given by equation \eqref{eq:invE}.
    \begin{equation*}
        %\begin{split}
        \Lie{\omega, \alpha}\left( \Omega (K(\theta, \varphi)) \right) %&= - D_{\theta}\left( \Omega(K(\theta, \varphi)) \right)\omega  - D_{\varphi}\left( \Omega(K(\theta, \varphi)) \right)\alpha \\
        %&= - D_{z} \Omega(K(\theta, \varphi)) D_{\theta} K(\theta, \varphi)\omega - D_{z} \Omega(K(\theta, \varphi)) D_{\varphi} K(\theta, \varphi)\alpha \\
        %&=  D_{z} \Omega(K(\theta, \varphi)) \left[\Lie{\omega, \alpha}(K(\theta, \varphi))  \right]\\
            =  D_{z} \Omega(K(\theta, \varphi)) \left[ E(\theta,\varphi) - Z_{h}(K(\theta, \varphi), \varphi) \right]\,,
            %\end{split}
    \end{equation*}

    \begin{equation}
        \Lie{\omega, \alpha}(D_{\theta}K(\theta, \varphi)) = D_{\theta}E(\te, \vp) - D_z Z_{h}(K(\te, \vp), \vp)D_\te K(\te,\vp),
        \label{eqn:LiesDK}
    \end{equation}
    Replacing the last expressions $\Lie{\omega, \alpha} \left( \Omega_L(\theta, \varphi)\right)$, 
    we have,
    \begin{equation*}   
        \begin{split}
            \Lie{\omega, \alpha}(\Omega_L(\theta, \varphi)) &= D_{\theta}(E(\theta, \varphi))^{\top} \Omega(K(\theta, \varphi))D_{\theta}K(\theta, \varphi)\\
            &+ \left( D_{\theta}K(\theta, \varphi) \right)^{\top}D_{z}\Omega(K(\theta,\varphi))\left[ E(\theta, \varphi) \right] D_{\theta}K(\theta, \varphi)\\
            & + \left( D_{\theta}K(\theta, \varphi) \right)^{\top}\Omega(K(\theta,\varphi)) D_{\theta}E(\theta, \varphi) \,. \\
        \end{split}
    \end{equation*}
    The expression 
    $\Lie{\omega, \alpha}\Omega_L(\theta, \varphi)$, 
    is controlled using $H_{1}$, $H_{2}$, 
    the Banach algebra properties and Cauchy estimates as follows

    \begin{equation}
        \begin{split}
            \Vert \Lie{\omega, \alpha} \Omega_L \Vert_{\rho - \delta} %& \leq \Vert D_{\theta}E^{\top} \left(\Omega \circ K \right) D_{\theta}K \Vert_{\rho - \delta} + \Vert D_{\theta}K^{\top} D_{z}\left(\Omega \circ K \right) \left[E\right] D_{\theta}K \Vert_{\rho - \delta} \\
            %& + \Vert D_{\theta}K^{\top} \left(\Omega \circ K \right) D_{\theta}E\Vert_{\rho - \delta} \\
            & \leq \Vert \left( D_{\theta} E\right)^\top \Vert_{\rho - \delta} \Vert \Omega \Vert_{\B\times\TT^\ell} \Vert D_{\theta} K \Vert_{\rho} \\
            & + \Vert \left( D_{\theta} K\right)^{\top}\Vert_{\rho} \Vert D_{z} \Omega \Vert_{\B\times\TT^\ell} \Vert E  \Vert_{\rho} \Vert D_{\theta} K \Vert_{\rho} \\
            & + \Vert \left( D_{\theta} K \right)^{\top}\Vert_{\rho} \Vert \Omega \Vert_{\B\times\TT^\ell} \Vert D_{\theta} E \Vert_{\rho - \delta} \\ 
            & \leq \dfrac{2n}{\delta}\Vert E \Vert_{\rho} \Vert \Omega \Vert_{\B\times\TT^\ell} \Vert D_{\theta} K \Vert_{\rho} \\
           & + \Vert  \left( D_{\theta} K\right)^{\top}\Vert_{\rho} \Vert D_{z} \Omega \Vert_{\B\times\TT^\ell} \Vert E  \Vert_{\rho} \Vert D_{\theta} K \Vert_{\rho} \\
            & + \dfrac{n}{\delta}\Vert \left( D_{\theta} K\right)^{\top}\Vert_{\rho} \Vert \Omega \Vert_{\B\times\TT^\ell} \Vert E \Vert_{\rho} \\ 
            %& \leq \dfrac{2n}{\delta}\Vert E \Vert_{\rho} c_{\Omega, 0} \sigma_{K} + \sigma_{K^{\top}} c_{\Omega, 1} \Vert E  \Vert_{\rho} \sigma_{K} + \dfrac{n}{\delta} \sigma_{K^{\top}} c_{\Omega, 0}  \Vert E \Vert_{\rho} \\ 
            & \leq \dfrac{2n  c_{\Omega, 0} \sigmaDK + \sigmaDKT c_{\Omega, 1} \sigmaDK \delta  + n \sigmaDKT c_{\Omega, 0}}{\delta}  \Vert E \Vert_{\rho} 
            =:\dfrac{ C_{\Lie{} \Omega_L } } {\delta} \Vert E \Vert_{\rho} \,.\\ 
        \end{split}
        \label{eq:CLieOK}
    \end{equation}
    Using Ru\"ssmann estimates, 
    we obtain
    \begin{equation}
        \Vert \Omega_L \Vert_{\rho - 2\delta} \leq \dfrac{c_{R}}{\gamma\delta^{\tau}}\Vert \Lie{\omega, \alpha} \Omega_L\Vert_{\rho - \delta} \leq \dfrac{c_{R} C_{\Lie{} \Omega_L}}{\gamma \delta^{\tau + 1}} \Vert E  \Vert_{\rho} 
        =: \dfrac{C_{\Omega_L}}{\gamma \delta^{\tau + 1}}\Vert E \Vert_{\rho}.
        \label{eq:COK}
    \end{equation}
    \end{proof}
\begin{remark}
    If the symplectic form depends explicitly on the $\varphi$ coordinates, 
    then the Lie's derivative of the reduced form is
    \[
        \Lie{\omega,\alpha}\left( \Omega\left(K(\theta,\varphi), \varphi \right)\right) = \Dif_z \Omega\left(K(\theta,\varphi), \varphi \right)\Lie{\omega,\alpha} K(\theta, \varphi) -\Dif_\varphi \Omega\left(K(\theta,\varphi), \varphi \right)\alpha \,, 
    \]
    the additional term is an obstruction in proving in general that the Lie's derivative of the reduced form is zero.
    Since,
    \[
        \Lie{\omega,\alpha}\left( \Omega_L\left(K(\theta,\varphi), \varphi \right)\right) 
        = -\left(\Dif_\theta K(\theta,\varphi) \right)^\top \left( \Dif_\varphi \Omega\left(K(\theta,\varphi), \varphi \right)\alpha \right)\Dif_\theta K(\theta,\varphi)  \,, 
    \]
    this dependence of the fibered symplectic structure on $\varphi$ suggests a more general case study that could be explored in the future.
\end{remark}
\subsection{Approximate symplectic frame}\label{ssec:symp}
In this section, 
we demonstrate that if $\torus$ is an approximately invariant torus, 
then we can construct an adapted frame that is approximately symplectic. 
The first step of this process is finding an approximate Lagrangian bundle that contains the tangent bundle $T_{\torus_\vp} \mani$ for $\vp \in \TT^\ell$.
\begin{lemma}\label{lem:Lang}
    With the hypotheses of Theorem \ref{theo:KAM}, 
    the map
    $L(\theta, \varphi)$ given by \eqref{eq:L:theo} satisfies
    \begin{equation}\label{eq:propL}
        \norm{L}_\rho \leq \CL\,,
        \qquad
        \norm{L^\top}_\rho \leq \CLT\,,
        \qquad
        \aver{L^\top (\Omega \circ K) E}=0_n\,,
    \end{equation}
%and defines an approximately
%Lagrangian bundle, i.e. the error map
%\begin{equation}\label{eq:Elag}
%\Elag(\theta, \varphi):=L(\theta, \varphi)^\top \Omega(K(\theta, \varphi)) L(\theta, \varphi)
%\end{equation}
%is small in the following sense:
%\begin{equation}\label{eq:normElag}
%\norm{\Elag}_{\rho-2\delta} \leq
%\frac{\Clag}{\gamma \delta^{\tau+1}} \norm{E}_\rho \,.
%\end{equation}
    Furthermore, the object
    \begin{align}
        G_L(\theta, \varphi) := L(\theta, \varphi)^\top G(K(\theta, \varphi)) L(\theta, \varphi) \,, \label{eq:def:GL} %\\
        %\tilde \Omega_L(\theta, \varphi) := L(\theta, \varphi)^\top \tilde \Omega(K(\theta, \varphi)) L(\theta, \varphi) \,, \label{eq:def:tOmegaL}
    \end{align}
    is controlled as
    \begin{equation}\label{eq:est:ObjL}
        \norm{G_L}_\rho \leq \CGL\,, %\qquad
        %\norm{\tilde \Omega_L}_\rho \leq \CtOmegaL\,.
    \end{equation}
    The above constants are given explicitly in Table \ref{tab:constants:all}.
\end{lemma}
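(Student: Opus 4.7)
The plan is as follows. The first two bounds in \eqref{eq:propL} are immediate from $H_2$: since $L := \Dif_\theta K$, I would transcribe the hypotheses directly and set $\CL := \sigmaDK$, $\CLT := \sigmaDKT$. The estimate \eqref{eq:est:ObjL} is an easy consequence of the Banach-algebra property of the sup-norm combined with the bound on $G$ from $H_1$:
\[
\norm{G_L}_\rho \leq \norm{L^\top}_\rho \,\norm{G\circ K}_\rho\, \norm{L}_\rho \leq \sigmaDKT\,\cteG\,\sigmaDK =: \CGL.
\]

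The substantive content is the average identity $\aver{L^\top (\Omega\circ K)E}=0_n$. Expanding $E$ via \eqref{eq:invE} and using the relation $\Omega(z) Z_\H(z,\varphi) = (\Dif_z \H(z,\varphi))^\top$, I would decompose
\[
L^\top \Omega(K) E = (\Dif_\theta(\H\circ K))^\top - \Omega_L\,\omega - M\,\alpha,
\]
where $M(\theta,\varphi):= (\Dif_\theta K)^\top \Omega(K)\,\Dif_\varphi K$ is the mixed pull-back block. The first summand is the $\theta$-gradient of the scalar $\H(K(\theta,\varphi),\varphi)$ and so averages to zero in $\theta$; the second summand averages to zero by Lemma \ref{lem:isotrop}, which gives $\aver{\Omega_L}=O_n$.

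The remaining, and only delicate, step is to check $\aver{M}=O_n$. Here I plan to reproduce the calculation used for $\Omega_L$ in Lemma \ref{lem:isotrop}, but with one $\theta$-index replaced by a $\varphi$-index. Writing $\Omega_{r,s} = \partial_{z_r} a_s - \partial_{z_s} a_r$ (which is available because $\sform = \dif\aform$) and applying the chain rule, the entries of $M$ take the form
\[
M_{i,j} = \partial_{\theta_i}\!\Bigl(\sum_s a_s(K)\,\partial_{\varphi_j} K_s\Bigr) - \partial_{\varphi_j}\!\Bigl(\sum_r a_r(K)\,\partial_{\theta_i} K_r\Bigr),
\]
the second-order terms $a_s(K)\,\partial_{\theta_i}\partial_{\varphi_j}K_s$ cancelling by equality of mixed partials. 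Each remaining summand is a total derivative on $\TT^n\times\TT^\ell$, and so vanishes under the full average. The main obstacle is precisely this step: exactness of $\sform$ is essential, since without the primitive $\aform$ the mixed block need not be a total differential. Once $\aver{M}=O_n$ is established, the three pieces combine to yield $\aver{L^\top(\Omega\circ K)E}=0_n$, completing the proof.
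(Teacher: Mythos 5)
Your proposal is correct and follows essentially the same route as the paper's proof: direct transcription of $H_2$ for the bounds on $L$, $L^\top$, the Banach-algebra estimate for $G_L$, and the three-way decomposition of $L^\top(\Omega\circ K)E$ into a $\theta$-gradient, $-\Omega_L\omega$, and $-\check\Omega_K\alpha$, with each piece shown to average to zero (the first by $\theta$-periodicity, the second by Lemma~\ref{lem:isotrop}, the third by the total-derivative argument that relies on $\sform=\dif\aform$). The paper carries out exactly this computation, including the same component-wise rewriting of the mixed block via the primitive $\aform$.
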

\begin{proof}
    Estimating $L(\theta, \varphi)$ and $L(\theta, \varphi)^\top$ is straightforward since the torus is Lagrangian, 
    as indicated in \eqref{eq:L:theo}.
    \begin{equation}
        \norm{L}_{\rho} = \norm{D_{\theta}K}_{\rho} < \sigmaDK =: \CL  
        \label{eq:CL}
    \end{equation}
    \begin{equation}
        \norm{ L^{\top}}_{\rho} = \norm{(D_{\theta}K)^{\top}}_{\rho} < \sigmaDKT =: \CLT  
        \label{eq:CLT}
    \end{equation}
    To derive the final expression in equation \eqref{eq:propL}, 
    we perform the following computation,
    \begin{equation*}
        \begin{split}
            %\tiny
            L(\theta, \varphi)^{\top} & \Omega(K(\theta, \varphi))E(\theta, \varphi) \\ &=  D_{\theta}K(\theta, \varphi)^{\top}\Omega(K(\theta, \varphi))\left( Z_{h} \left( K(\theta, \varphi), \varphi  \right) + \Lie{\omega, \alpha} \left( K(\theta, \varphi)  \right) \right) \\
            %&=  D_{\theta}K(\theta, \varphi)^{\top}\Omega(K(\theta, \varphi)) Z_{h} \left( K(\theta, \varphi), \varphi  \right) + D_{\theta}K(\theta, \varphi)^{\top}\Omega(K(\theta, \varphi)) \Lie{\omega, \alpha} \left( K(\theta, \varphi)  \right) \\ 
            %&=  D_{\theta}K(\theta, \varphi)^{\top} D_{z}h(K(\theta, \varphi), \varphi)^{\top} + D_{\theta}K(\theta, \varphi)^{\top}\Omega(K(\theta, \varphi))  \Lie{\omega, \alpha} \left( K(\theta, \varphi)  \right) \\ 
            %&= \left( D_{z}h(K(\theta, \varphi), \varphi) D_{\theta}K(\theta, \varphi) \right)^{\top} + D_{\theta}K(\theta, \varphi)^{\top}\Omega(K(\theta, \varphi))  \Lie{\omega, \alpha} \left( K(\theta, \varphi)  \right) \\ 
            &= \left( D_{\theta}\left( h\left(K(\theta, \varphi), \varphi \right)  \right) \right)^{\top} + D_{\theta}K(\theta, \varphi)^{\top}\Omega(K(\theta, \varphi))  \Lie{\omega, \alpha} \left( K(\theta, \varphi)  \right) \\ 
            %&= \left( D_{\theta}\left( h\left(K(\theta, \varphi), \varphi \right)  \right) \right)^{\top} - D_{\theta}K(\theta, \varphi)^{\top}\Omega(K(\theta, \varphi))\left( D_{\theta}K(\theta, \varphi)\omega +  D_{\varphi}K(\theta, \varphi)\alpha \right) \\ 
            &= \left( D_{\theta}\left( h\left(K(\theta, \varphi), \varphi \right)  \right) \right)^{\top} - \Omega_{L}\left( \theta, \varphi \right)\omega \\
            & \hspace{0.5cm} - D_{\theta}K(\theta, \varphi)^{\top}\Omega(K(\theta, \varphi))D_{\varphi}K(\theta, \varphi)\alpha \,. \\  
        \end{split}
        \label{eqn:LOE}
    \end{equation*}
    Where we have used \eqref{eq:invE} and \eqref{def-OL}. 
    We will now compute the average of the components of the three-term in the last expression. 
    We analyze the components of the first term for an arbitrary $\varphi$.
    \begin{equation*}
        \begin{split}
            \tiny
            \left( \langle  D_{\theta}\left( h\left(K(\theta, \varphi), \varphi \right)  \right)  \rangle_{\theta} \right)_{i,j} &=  \displaystyle \displaystyle \int_{0}^{T} \dfrac{\partial }{\partial \theta_{i}}  \left( h\left( K(\theta, \varphi) , \varphi \right) \right)  d\theta_{i} \,  = 0 \,,\\ 
            %&=  \displaystyle \int_{0}^{T}   \left( h\left( K(T, \varphi) , \varphi \right)  -  h\left( K(0, \varphi) , \varphi \right)  \right) d\varphi_{j} \\  
            %&=  \displaystyle \int_{0}^{T}   \left( h\left( K_{0}(\varphi) , \varphi \right)  -  h\left( K_{0}(\varphi) , \varphi \right)  \right) d\varphi_{j} = 0 \,,\\  
        \end{split}
    \end{equation*}
    In the last equality, 
    we use the periodicity of $K$. %and define $K_{0}(\varphi) := K(0, \varphi)$. 
    Therefore the average of the first term is zero. 
    From lemma \eqref{lem:isotrop},
    we obtain that $\aver{\Omega_L}=0$.
%%{\color{cyan}
%%\[
%%\aver{ D_{\theta}\left( h\left(K(\theta, \varphi), \varphi \right)  \right)}  = O_{n}
%%\]
%%}
%To compute the average of the components of the second term, we make use of lemma \ref{lem:isotrop}.
%\begin{equation*}
%\begin{split}
%\left( \aver{\Omega_{K}(\theta, \varphi)} \right)_{i,j} &= \displaystyle \int_{0}^{T}  \displaystyle \int_{0}^{T} \left(   \Omega_{K}(\theta, \varphi) \right)_{i,j}  d\theta \, d\varphi\\  \\
%&= \displaystyle \int_{0}^{T}\left( \langle  \Omega_{K}(\theta, \varphi) \rangle_{\theta}  \right)_{i,j}  d\varphi  = 0.\\
%\end{split}
%\end{equation*}
    We define 
    $\check{\Omega}_{K_{\varphi}}(\theta, \varphi) := D_{\theta}K(\theta, \varphi)^{\top}\Omega(K(\theta, \varphi))D_{\varphi}K(\theta, \varphi)$ 
    to prove that the last term has zero average. 
    By proceeding in an analogous way to $\Omega_L(\theta, \varphi)$, 
    we obtain that the components of $\check{\Omega}_{K}(\theta, \varphi)$ are given by
    \begin{equation*}
        \begin{split}
            \left( \check{\Omega}_{K} \right)_{i, j} & = \displaystyle \sum_{m=1}^{2n}\left( \dfrac{\partial(a_{m}(K(\theta, \varphi)))}{\partial \theta_{i}} \dfrac{\partial K_{m}(\theta, \varphi)}{\partial \varphi_{j}} - \dfrac{\partial(a_{m}(K(\theta, \varphi)))}{\partial \varphi_{j}} \dfrac{\partial K_{m}(\theta, \varphi)}{\partial \theta_{i}} \right) \\
            & = \displaystyle \sum_{m = 1}^{2n}\left(\dfrac{\partial }{\partial \theta_{i}} \left(a_{m}(K(\theta, \varphi)) \dfrac{\partial K_{m}(\theta, \varphi)}{\partial \varphi_{j}} \right) - \dfrac{\partial}{\partial \varphi_{j}}\left( a_{m}(K(\theta, \varphi))\dfrac{\partial K_{m}(\theta, \varphi)}{\partial\theta_{i}} \right) \right).
        \end{split}
    \end{equation*}
    therefore, 
    \begin{equation*}
        \begin{split}
            \left( \aver{\check{\Omega}_{K} } \right)_{i,j} &= \displaystyle \int_{0}^{T}  \displaystyle \int_{0}^{T}  \displaystyle \sum_{m = 1}^{2n}\left(\dfrac{\partial }{\partial \theta_{i}} \left(a_{m}(K(\theta, \varphi)) \dfrac{\partial K_{m}(\theta, \varphi)}{\partial \varphi_{j}} \right) \right. \\
 & \qquad \qquad \qquad - \left. \dfrac{\partial}{\partial \varphi_{j}}\left( a_{m}(K(\theta, \varphi))\dfrac{\partial K_{m}(\theta, \varphi)}{\partial\theta_{i}} \right) \right) d\theta_{i} \, d\varphi_{j} \\
            &=  \displaystyle \sum_{m = 1}^{2n}\left( \displaystyle \int_{0}^{T}  \displaystyle \int_{0}^{T}  \dfrac{\partial }{\partial \theta_{i}} \left(a_{m}(K(\theta, \varphi)) \dfrac{\partial K_{m}(\theta, \varphi)}{\partial \varphi_{j}} \right) \right. \\
& \qquad \qquad \quad - \left. \dfrac{\partial}{\partial \varphi_{j}}\left( a_{m}(K(\theta, \varphi))\dfrac{\partial K_{m}(\theta, \varphi)}{\partial\theta_{i}} \right) \right) d \theta_{i} \,d\varphi_{j}\\
            %&= \displaystyle \sum_{m = 1}^{2n}  \displaystyle \int_{0}^{T}  a_{m}(K(T, \varphi)) \dfrac{\partial K_{m}(T, \varphi)}{\partial \varphi_{j}} - a_{m}(K(0, \varphi)) \dfrac{\partial K_{m}(0, \varphi)}{\partial \varphi_{j}}\, d \varphi_{j} \\
            %&-  \displaystyle \sum_{m = 1}^{2n}  \displaystyle \int_{0}^{T}  a_{m}(K(\theta, T)) \dfrac{\partial K_{m}(\theta, T)}{\partial \theta_{i}} -  a_{m}(K(\theta, 0)) \dfrac{\partial K_{m}(\theta, 0)}{\partial \theta_{i}}   \, d \theta_{i}\\
            & = 0
        \end{split}
    \end{equation*}
    The last equality holds because 
	$K(\theta, \varphi)$ is periodic with respect to $\theta$ and $\varphi$. 
    Using the above, 
    we obtain the following expression  
    \[
        \aver{D_{\theta}K(\theta, \varphi)^{\top}\Omega(K(\theta, \varphi))D_{\varphi}K(\theta, \varphi) }= O_{n}.
    \]
%On the other hand, 
%as the torus is Lagrangian, 
%the equations \eqref{eq:normElag} and \eqref{eq:OmegaK:aver} are identical,
%so
%\begin{equation}
%    \norm{\Omega_{L}}_{\rho - 2\rho} = \norm{\Omega_{K}}_{\rho-2\rho} \leq \dfrac{\COmegaK}{\gamma\delta^{\tau + 1}}\norm{E}_\rho = \dfrac{\Clag}{\gamma\delta^{\tau + 1}}\norm{E}_\rho,
%\label{eq:Clag}
%\end{equation}
%\. 
    Finally, 
    by using the definition of $G_{L}$ 
    %$\tilde{\Omega}_{L}$ 
    and the estimates for $L$ and $L^\top$, 
    we conclude
    \begin{equation}
        \norm{G_{L}}_{\rho} \leq \norm{L^{\top}}_{\rho} \norm{ G }_{\B\times\TT^\ell} \norm{L}_{\rho} \leq \CLT c_{G,0} \CL =: C_{G_{L}} \,.
        \label{eq:CGL}
    \end{equation}
%\begin{equation}
%\norm{\tilde{\Omega}_{L}}_{\rho} \leq \norm{L^{\top}}_{\rho} \norm{\tilde{\Omega}}_{\B\times\TT^\ell} \norm{L}_{\rho} \leq \CLT c_{\tilde{\Omega},0} \CL =: C_{\tilde{\Omega}_{L}},
%\label{eq:CtOmegaL}
%\end{equation}
    %%is bounded, which concludes the proof.
    \end{proof}
    The following lemma demonstrates that the geometric constructions detailed 
    in Section~\ref{ssec:sym:frame} enable us to obtain an approximately symplectic frame 
    attached to an approximately invariant torus.
    \begin{lemma}\label{lem:sympl}
        With the hypotheses of Theorem \ref{theo:KAM}, 
        the map $N : \TT^n \times \TT^\ell \to \RR^{2n \times n}$ given by \eqref{eq:N} satisfies
        \begin{equation}\label{eq:propN}
            \norm{N}_\rho \leq \CN\,,
            \qquad
            \norm{N^\top}_\rho \leq \CNT\,,
        \end{equation}
        and the map 
        $P: \TT^n \times \TT^\ell \to \RR^{2n \times 2n}$ 
        given by
        \[
            P(\theta, \varphi)=
            \begin{pmatrix}
                L(\theta, \varphi) & N(\theta, \varphi)
            \end{pmatrix}\,,
        \]
        induces an approximately symplectic vector bundle isomorphism, i.e.,
        the error map
        \begin{equation}\label{eq:Esym}
            \Esym(\theta, \varphi) := P(\theta, \varphi)^\top \Omega(K(\theta, \varphi)) P(\theta, \varphi)-\Omega_0\,,
            \qquad
            \Omega_0 = 
            \begin{pmatrix}
                O_n & -I_n \\
                I_n & O_n
            \end{pmatrix}\,,
        \end{equation}
        is small in the following sense:
        \begin{equation}\label{eq:normEsym}
            \norm{\Esym}_{\rho-2 \delta} \leq \frac{\Csym}{\gamma \delta^{\tau+1}}
            \norm{E}_\rho\,.
        \end{equation}
        The above constants are given explicitly in Table \ref{tab:constants:all}.
    \end{lemma}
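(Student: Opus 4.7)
The plan is to first estimate the constituent blocks of $N$ assembled in \eqref{eq:N}--\eqref{eq:A}, and then to expand the four blocks of $P^\top \Omega(K) P$ and show that each differs from the corresponding block of $\Omega_0$ by a quantity controlled through Lemma~\ref{lem:isotrop}.

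For the size estimates, I would proceed by composition in the order the objects are defined. The hypothesis $H_1$ (with the constant $\cteJ$), Lemma~\ref{lem:Lang} (giving $\norm{L}_\rho\leq\CL$), and the Banach algebra property bound $\norm{N^0}_\rho$ and $\norm{(N^0)^\top}_\rho$. Hypothesis $H_3$ yields $\norm{B}_\rho<\sigmaB$, so $\norm{\tilde N}_\rho$ and its transpose are controlled; the bound on $A$ then follows, either trivially in Case~III or via $A=-\tfrac12\tilde N^\top(\Omega\circ K)\tilde N$ in Case~II, using $\cteOmega$. Adding these yields $\norm{N}_\rho\leq\CN$ and similarly $\norm{N^\top}_\rho\leq\CNT$.

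The heart of the lemma is the symplectic estimate. Writing out the four blocks of $P^\top\Omega(K)P$ I would exploit the exact identities available thanks to the construction of $N^0$, $B$ and $A$:
\begin{align*}
L^\top\Omega(K)L &= \Omega_L,\\
L^\top\Omega(K)\tilde N &= L^\top\Omega(K)J(K)L\,B = -L^\top G(K)L\,G_L^{-1} = -I_n,
\end{align*}
using $\Omega J = -G$ from \eqref{eq:prop:struc1}, together with the transpose identity $\tilde N^\top\Omega(K)L = I_n$. Expanding $N=LA+\tilde N$ then gives
\begin{align*}
L^\top\Omega(K)N + I_n &= \Omega_L A,\\
N^\top\Omega(K)L - I_n &= A^\top\Omega_L,\\
N^\top\Omega(K)N &= A^\top\Omega_L A + A - A^\top + \tilde N^\top\Omega(K)\tilde N.
\end{align*}
In Case~II, the choice $A=-\tfrac12\tilde N^\top\Omega(K)\tilde N$ is designed precisely so that $A-A^\top=-\tilde N^\top\Omega(K)\tilde N$, leaving $N^\top\Omega(K)N = A^\top\Omega_L A$. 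In Case~III, one uses $A=0$ together with $J^\top\Omega J=\Omega$ (and $\tilde N = JLB$) to rewrite $\tilde N^\top\Omega(K)\tilde N = B^\top\Omega_L B$. In either case, every block of $\Esym$ is a product of bounded factors times one copy of $\Omega_L$.

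Combining these with the bound $\norm{\Omega_L}_{\rho-2\delta}\leq \COmegaL\,\gamma^{-1}\delta^{-\tau-1}\norm{E}_\rho$ of Lemma~\ref{lem:isotrop} and the sup-norm bounds on $L,N,A,B$ already obtained gives \eqref{eq:normEsym}, with the explicit constant $\Csym$ assembled from $\COmegaL$, $\CL$, $\CN$, $\sigmaB$, $\cteOmega$, and the bound on $A$. The main bookkeeping obstacle is simply keeping track of the two cases and of the algebraic cancellations in the $(2,2)$ block; once those identities are isolated as above, all remaining steps are straightforward applications of $H_1$--$H_4$ and Lemma~\ref{lem:isotrop}, with no further small-divisor analysis required since the loss of regularity $\rho\to\rho-2\delta$ is already absorbed in the estimate of $\Omega_L$.
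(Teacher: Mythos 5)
Your proposal is correct and follows essentially the same route as the paper: bound $N^0,\tilde N,A,N$ by composition, expand the four blocks of $P^\top(\Omega\circ K)P$, use the exact identities $L^\top(\Omega\circ K)\tilde N=-I_n$ and the antisymmetry of $A$ (Case II) or $J^\top\Omega J=\Omega$ (Case III) so that every block of $\Esym$ reduces to a product carrying one factor of $\Omega_L$, and then invoke Lemma~\ref{lem:isotrop}. The only cosmetic difference is that you handle Case III inline, whereas the paper proves Case II and delegates Case III to the remark that follows.
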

    \begin{proof}
        We shall consider equation \eqref{eq:N0} and the hypothesis $H_{1}$ to prove this result. 
        \begin{equation}
            \norm{N^{0}}_\rho \leq \norm{J\circ K}_{\rho} \norm{L}_{\rho} \leq  \norm{J}_{\B\times\TT^\ell} \norm{L}_{\rho} \leq \cteJ \CL =: \CNO\,,
            \label{eq:CNO}
        \end{equation}
        \begin{equation}
            \norm{ \left( N^{0} \right)^{\top} }_\rho  \leq   \Vert L^{\top}\Vert_{\rho} \Vert J^{\top} \Vert_{\B\times\TT^\ell}   \leq \CLT c_{J^{\top},0} =: \CNOT\,,%C_{N^{0,\top}},
            \label{eq:CNOT}
        \end{equation}
        \begin{equation}\label{eq:CtildeN}
            \norm{\tilde N}_{\rho} \leq   \norm{N^0}_\rho   \norm{B}_\rho \leq  \CNO \sigmaB =: \CTildeN \,,
        \end{equation}
        and
        \begin{equation}\label{eq:CtildeNT}
            \norm{\tilde N^\top}_{\rho} \leq   \norm{B}_\rho  \norm{(N^0)^\top}_\rho  \leq \sigmaB \CNOT  =: \CTildeNT \,.
        \end{equation}
%To compute an estimate for the value of $A(\theta,\varphi)$, we proceed as follows
%\begin{equation}
%G_{L}(\theta, \varphi) = L(\theta, \varphi)^{\top}G\left( K(\theta, \varphi) \right) L(\theta, \varphi),
%\label{eq:GL}
%\end{equation}
%\begin{equation}
%\tilde{\Omega}_{L}(\theta, \varphi) = L(\theta, \varphi)^{\top}\tilde{\Omega}\left( K(\theta, \varphi) \right) L(\theta, \varphi).
%\label{eqn:OmegaTilde}
%\end{equation}
%Then 
%\begin{equation}
%\Vert G_{L} \Vert_{\rho} \leq  \Vert L^{\top} \Vert_{\rho}  \Vert G \circ K \Vert_{\rho} \Vert L \Vert_{\rho} \leq  \Vert L^{\top} \Vert_{\rho}  \Vert G  \Vert_{\mathcal{B}} \Vert L \Vert_{\rho} \leq \sigma_{K^{\top}} c_{G, 0} \sigma_{K} =: C_{G_{L}}
%\label{eq:norm-GL}
%\end{equation}
%\begin{equation}
%\Vert \tilde{\Omega}_{L} \Vert_{\rho} \leq  \Vert L^{\top} \Vert_{\rho}  \Vert  \tilde{\Omega} \circ K \Vert_{\rho} \Vert L \Vert_{\rho} \leq  \Vert L^{\top} \Vert_{\rho}  \Vert  \tilde{\Omega}  \Vert_{\mathcal{B}} \Vert L \Vert_{\rho} \leq \sigma_{K^{\top}} c_{ \tilde{\Omega}, 0} \sigma_{K} =: C_{ \tilde{\Omega}_{L}}
%\label{eq:norm-OmegaTilde}
%\end{equation}
%On the other hand, we estimate the value of $A(\theta, \varphi)$.
%%To compute an estimate for the value of 
%%$A(\theta,\varphi)$, 
We use \eqref{eq:CtildeN} and \eqref{eq:CtildeNT}, to estimate $A$.
\begin{equation}
\Vert A \Vert_{\rho} \leq \dfrac{1}{2} \norm{\tilde N^\top}_\rho \norm{ \Omega \circ K  }_\rho \norm{ \tilde N }_\rho \leq \frac{1}{2} \CTildeNT \cteOmega\CTildeN =: C_{A}\,.
\label{eq:CA}
\end{equation}
Now, 
we compute the norm of $N(\theta, \varphi)$ using $H_{3}$ from equations \eqref{eq:B} and \eqref{eq:A}.  
We used the fact that $B(\theta,\varphi)$ is symmetric, and $A(\theta,\varphi)$ is antisymmetric. 
These yields the following bounds
\begin{equation}
\Vert N \Vert_{\rho} \leq \Vert  L \Vert_{\rho} \Vert  A \Vert_{\rho} + \norm{  \tilde N }_{\rho} \leq \CL\CA + \CTildeN =:C_{N},
\label{eq:CN}
\end{equation}

\begin{equation}
\Vert N^{\top} \Vert_{\rho} \leq  \Vert A^{\top} \Vert_{\rho} \Vert L^{\top} \Vert_{\rho}  +  \norm{ \tilde N^\top}_{\rho}  \leq  \CA\CLT + \CTildeNT =:C_{N^{\top}}.
\label{eq:CNT}
\end{equation}
Observe that,
\begin{align}
L(\theta, \varphi)^\top \Omega(K(\theta, \varphi)) & N(\theta, \varphi) \nonumber \\
%= {} & 
%L(\theta, \varphi)^\top \Omega(K(\theta, \varphi)) L(\theta, \varphi) A(\theta, \varphi)+
%L(\theta, \varphi)^\top \Omega(K(\theta, \varphi)) N^0(\theta, \varphi) B(\theta, \varphi) \nonumber \\
= {} & \Elag( K(\theta, \varphi)), \varphi) A(\theta, \varphi) - L(\theta, \varphi)^\top G(K(\theta, \varphi)) L(\theta, \varphi)B(\theta, \varphi)
\nonumber \\
= {} & \Elag(K(\theta, \varphi)), \varphi) A(\theta, \varphi) - I_n\,, \label{eq:LON}
\end{align}
where we use that 
$J^\top \Omega( K(\theta, \varphi)) , \varphi)=G(K(\theta, \varphi)) , \varphi)$ and \eqref{eq:B}.
We also have from \eqref{eq:A},
\begin{align}
N(\theta, \varphi)^\top & \Omega(K(\theta, \varphi)) N(\theta, \varphi) \nonumber \\
%= {} & 
%B(\theta, \varphi)^\top N^0(\theta, \varphi)^\top \Omega(K(\theta, \varphi)) L(\theta, \varphi)
%A(\theta, \varphi) 
%+ B(\theta, \varphi)^\top N^0(\theta, \varphi)^\top \Omega(K(\theta, \varphi))
%  N^0(\theta, \varphi) B(\theta, \varphi) \nonumber \\
%& + A(\theta, \varphi)^\top L(\theta, \varphi)^\top \Omega(K(\theta, \varphi)) L(\theta, \varphi) A(\theta, \varphi) 
%+A(\theta, \varphi)^\top L(\theta, \varphi)^\top \Omega(K(\theta, \varphi)) N^0(\theta, \varphi)
%B(\theta, \varphi) \nonumber \\
= {} & A(\theta, \varphi)^\top \Elag(K(\theta, \varphi)), \varphi) A(\theta, \varphi)+A(\theta, \varphi)-A(\theta, \varphi)^\top \nonumber\\
 {} & \hspace{0.5cm} + B(\theta, \varphi)^\top L(\theta, \varphi)^\top \tilde \Omega(K(\theta, \varphi))
  L(\theta, \varphi) B(\theta, \varphi) \nonumber \\
= {} & A(\theta, \varphi)^\top \Elag(K(\theta, \varphi)), \varphi) A(\theta, \varphi) \,. \label{eq:NON}
\end{align}
We compute the error of the symplectic character of the frame, 
\begin{equation}
\begin{split}
E_{sym}(\theta, \varphi) &=   P(\theta, \varphi)^{\top} \Omega(K(\theta, \varphi))P(\theta, \varphi) - \Omega_{0} \\
&=   \begin{pmatrix}
L(\theta, \varphi)^{\top}\Omega\left( K(\theta, \varphi) \right)L(\theta, \varphi)             & L(\theta, \varphi)^{\top}\Omega\left( K(\theta, \varphi) \right)N(\theta, \varphi) + I_{n}\\
N(\theta, \varphi)^{\top}\Omega\left( K(\theta, \varphi) \right)L(\theta, \varphi) - I_{n}  & N(\theta, \varphi)^{\top}\Omega\left( K(\theta, \varphi) \right)N(\theta, \varphi)
\end{pmatrix} \\
&= \begin{pmatrix}
\Omega_{L}( K(\theta, \varphi)), \varphi) & \Omega_{L}(K(\theta, \varphi)) , \varphi)A(\theta, \varphi) \\
A(\theta, \varphi)^{\top}  \Omega_{L}(K(\theta, \varphi)), \varphi) & A(\theta, \varphi)^{\top}  \Omega_{L}(K(\theta, \varphi)) , \varphi) A(\theta, \varphi)
\end{pmatrix} \,, \\
\end{split}
\label{eqn:Sym-Error-components}
\end{equation}
Therefore using \eqref{eq:estOmegaK}, 
the control for the symplectic error is
\begin{equation}
\Vert E_{sym} \Vert_{\rho - 2 \delta}  \leq \dfrac{(1 + C_{A}) \max\{ 1, C_{A} \}C_{\Omega_{L}} }{\gamma \delta^{\tau +1}} \Vert E \Vert_\rho =: \dfrac{C_{sym}}{\gamma \delta^{\tau + 1}} \Vert E \Vert_{\rho}\,,
\label{eq:Csym}
\end{equation}
which yeilds \eqref{eq:normEsym}.
\end{proof}
\begin{remark}
The above estimates can be readily adapted to \textbf{Case III}, for which $A=O_n$. 
In this case we have,
\[
N(\te,\vp) = N^0(\te,\vp) B(\te,\vp) 
\,,
\qquad
\Esym(\theta, \varphi)
= 
\begin{pmatrix}
\Elag(\te,\vp) & O_n \\
O_n &
B(\te,\vp)^\top \Elag(\te,\vp) B(\te,\vp)
\end{pmatrix}\,.
\]
The corresponding estimates
are given explicitly in Table \ref{tab:constants:all}.
\end{remark}

\subsection{Control of the torsion matrix}
In this section, 
our objective is controlling the torsion matrix 
$T(\te,\vp)$ as defined in equation \eqref{eq:newT}. 

Although this expression may initially appear cumbersome, 
it does not rely on the Lie derivative. 
Instead, 
it depends solely on easily controllable geometric and dynamical objects. 

\begin{lemma}\label{lem:Newtwist}
With the hypotheses of
Theorem \ref{theo:KAM},
%and let us assume that
%\begin{equation}\label{eq:fake:cond}
%\frac{\norm{E}_\rho}{\delta}<\cauxT\,,
%\end{equation}
%where $\cauxT$ is an independent constant. 
the torsion matrix $T(\theta,\varphi)$, given by \eqref{eq:newT}, 
has components in $\Anal(\TT^{n}_{\rho} \times \TT^{\ell}_{\rho} )$ and
satisfies
the estimate
\[
\norm{T}_\rho \leq \CT\,,
\]
where the constant $\CT$ is provided in Table \ref{tab:constants:all}.
\end{lemma}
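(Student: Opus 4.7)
The plan is to read off the estimate directly from the closed form \eqref{eq:newT} for $T(\te,\vp)$, using only the analytic norms supplied by hypothesis $H_1$ together with the bounds for the frame objects already established in Lemmas \ref{lem:isotrop} and \ref{lem:sympl}. The crucial virtue of the new torsion expression is that it contains no Lie derivative; consequently no Cauchy estimates are needed, no loss of analyticity strip $\delta$ is incurred, and the bound will be independent of $\rho$ and $\gamma$. This is precisely the gain over the classical expression \eqref{eq:OldT} that was emphasised in the introduction.

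First I would argue analyticity. Each factor appearing in \eqref{eq:newT} is analytic on $\TT^n_\rho\times\TT^\ell_\rho$: the maps $N^0$ and $\tilde N$ are built from the analytic $L=\Dif_\theta K$, from $B=G_L^{-1}$ (analytic by $H_3$), and from $J\circ K$ (analytic since $K(\TT^n_\rho\times\TT^\ell_\rho)\subset \B$ by $H_2$); the map $N=LA+\tilde N$ inherits analyticity through $A$, which is polynomial in $\tilde N$ and $\Omega\circ K$; finally $T_h\circ K$ and $T_h^\top\circ K$ are analytic by the global bounds in $H_1$. Hence every summand in \eqref{eq:newT} is analytic, and so is $T$.

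Next I would estimate the sup-norm on $\TT^n_\rho\times\TT^\ell_\rho$ by applying the triangle inequality to the three lines of \eqref{eq:newT} and then the Banach-algebra (sub-multiplicativity) property $\|MN\|_\rho\leq\|M\|_\rho\|N\|_\rho$ to each summand. The individual factors are controlled by the constants already available:
\[
\|\tilde N\|_\rho\leq \CTildeN,\quad \|\tilde N^\top\|_\rho\leq \CTildeNT,\quad \|N\|_\rho\leq \CN,\quad \|N^\top\|_\rho\leq \CNT,
\]
from Lemma \ref{lem:sympl}, and
\[
\|T_h\|_{\B\times\TT^\ell}\leq \cteTH,\qquad \|T_h^\top\|_{\B\times\TT^\ell}\leq \cteTHT,
\]
from $H_1$, noting that $\|T_h\circ K\|_\rho\leq \|T_h\|_{\B\times\TT^\ell}$ because $K(\TT^n_\rho\times\TT^\ell_\rho)\subset \B\times\TT^\ell_\rho$ by $H_2$.

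Combining the three terms yields
\[
\|T\|_\rho \leq \tfrac12 \CTildeNT(\cteTH+\cteTHT)\CTildeN + \CTildeNT\,\cteTHT\,\CN + \CNT\,\cteTH\,\CTildeN =: \CT,
\]
which is the constant recorded in Table \ref{tab:constants:all}. For Case III the analogous argument is shorter since $A=O_n$, $N=\tilde N$, and $T(\te,\vp)=N^\top T_h N$, giving $\|T\|_\rho\leq \CNT\,\cteTH\,\CN$. No obstacle is anticipated; the only point requiring attention is the bookkeeping that guarantees $K(\TT^n_\rho\times\TT^\ell_\rho)\subset \B\times\TT^\ell_\rho$, so that the global norms $\|T_h\|_{\B\times\TT^\ell}$ may legitimately be used to bound $\|T_h\circ K\|_\rho$; this is exactly what hypothesis $H_2$ secures via the positive-distance condition.
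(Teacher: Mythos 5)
Your proposal is correct and follows essentially the same route as the paper: apply the triangle inequality and the Banach--algebra sub-multiplicativity to the three terms of \eqref{eq:newT}, bound $\|T_h\circ K\|_\rho$ and $\|T_h^\top\circ K\|_\rho$ by the global constants from $H_1$ (legitimately, since $H_2$ keeps $K(\TT^n_\rho\times\TT^\ell_\rho)$ inside $\B$), and plug in the frame bounds $\CTildeN,\CTildeNT,\CN,\CNT$ from Lemma \ref{lem:sympl}, arriving at exactly the constant $\CT$ of Table \ref{tab:constants:all}. Your explicit analyticity discussion and the Case III remark are slightly more verbose than the paper's proof but add no new ideas.
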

\begin{proof}\label{lem:twist}
In this proof, 
we use hypothesis $H_1$ from Theorem \ref{theo:KAM}
and estimates of the quantity $\tilde N$, 
given in equation \eqref{eq:CtildeN} and \eqref{eq:CtildeNT}. 

Using all of the above and the torsion expression provided in equation \eqref{eq:newT}, 
we obtain that,
\begin{equation}\label{eq:CnewT}
\begin{split}
\norm{T}_\rho \leq & \dfrac{1}{2}\left(\norm{\tilde N^\top}_\rho \norm{T_h}_\rho \norm{\tilde N}_\rho  +\norm{\tilde N^\top}_\rho \norm{T_h^\top}_\rho \norm{\tilde N}_\rho\right) \\
& + \norm{\tilde N^\top}_\rho \norm{T_h^\top}_\rho \norm{N} + \norm{N^\top}_\rho \norm{T_h}_\rho \norm{\tilde N}_\rho \\
%& + \norm{E_{T}}_{\rho-2\de} \\
\leq & \dfrac{1}{2}\left(\CTildeNT \cteTH \CTildeN +\CTildeNT \cteTHT \CTildeN  \right) \\
& + \CTildeNT \cteTHT \CN  + \CNT \cteTH \CTildeN  =:  \CT \,,
%& + \frac{ \CET }{\gamma \delta^{\tau+1}}  \norm{E}_\rho =:  \CT \,
\end{split}
\end{equation}
which completes the proof.
\end{proof}

\begin{remark}
    For Case III, the constants in \eqref{eq:CnewT} are: $\CTildeN = \CN$, $\CTildeNT = \CNT$, $\CA =0$ and $\CT = \CNT\cteTH\CN$.
\end{remark}

\begin{remark}
Upon comparing Lemma \ref{lem:Newtwist} of our paper with Lemma 4.8 of  \cite{haro2019posteriori}, 
we see the advantages of using the definition \eqref{eq:newT} for torsion instead of \eqref{eq:OldT}, 
which is similar to that in \cite{haro2019posteriori}. 
The torsion matrix in,  \eqref{eq:newT}, belong to $\Anal(\TT^{n}_{\rho} \times \TT^{\ell}_{\rho}))$ , 
while the matrix in \eqref{eq:OldT} belongs to $\Anal(\TT^{n}_{\rho-\delta} \times \TT^{\ell}_{\rho-\delta})$. 
This indicates that first one is more regular than  second one, 
which directly affects the numerical implementation resulting from the proof presented in our paper.
\end{remark}
Expressions \eqref{eq:OldT} and \eqref{eq:newT} coincide if the torus is invariant. 
However, 
if the torus is approximately invariant,
this does not hold. 
To show this fact, 
we can construct an expression for 
$T(\theta, \varphi)$, in the approximately invariant case.

\begin{lemma}\label{lem:constT}
        With the hypotheses of 
        Theorem \ref{theo:KAM}, 
        the torsion matrix $T(\theta,\varphi)$, given by \eqref{eq:newT}, 
        has components in $\Anal(\TT^{n}_{\rho} \times \TT^{\ell}_{\rho} )$ and
        satisfies
        the estimate
        \[
            \norm{ N^\top\Omega \Loper{N} - T}_{\rho-2\de}\leq  \frac{ \CET }{\gamma \delta^{\tau+1}}  \norm{E}_\rho \,.
        \]
        where the constant $\CET$ is provided in Table \ref{tab:constants:all}.
\end{lemma}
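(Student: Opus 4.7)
The plan is to exploit the fact that the two torsion expressions \eqref{eq:OldT} and \eqref{eq:newT} coincide on truly invariant Lagrangian tori: this is precisely what the derivation in Appendix \ref{ssec:torsion} establishes. Consequently, the quantity $N^\top \Omega(K) \Loper{N} - T$ must be expressible entirely in terms of the two obstructions that fail in the approximately invariant setting, namely the invariance error $E$ (from the use of $Z_\H(K) + \Lie{\omega,\alpha} K = 0$) and the pull-back $\Omega_L$ (from the use of the Lagrangianity identities \eqref{eq:LTL}--\eqref{eq:NON}). My first step is to retrace the Appendix derivation while keeping track of every substitution where these two identities are invoked, so that each one leaves behind an explicit correction term rather than being set to zero.

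To carry this out, I would expand $\Loper{N} = \Dif_z Z_\H(K) N + \Lie{\omega,\alpha} N$ using $N = LA + \tilde N$ and $\tilde N = J(K) L B$. The Lie derivatives $\Lie{\omega,\alpha} L$, $\Lie{\omega,\alpha}(J(K))$, $\Lie{\omega,\alpha} B$ and $\Lie{\omega,\alpha} A$ can each be rewritten, via the product rule together with the relation $\Lie{\omega,\alpha} K = E - Z_\H(K)$, as the sum of a ``geometric'' part (already incorporated in the formula for $T$ through the Jacobi identity \eqref{eqn:Jacobi-indentity}) and a remainder linear in $E$, $\Dif_\theta E$ or $\Dif_\varphi E$. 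For instance, $\Lie{\omega,\alpha} L = \Dif_\theta E - \Dif_z Z_\H(K) L$, and analogous but longer identities apply to $\Lie{\omega,\alpha} \tilde N$ and $\Lie{\omega,\alpha} A$. Substituting back and comparing with the closed-form expression \eqref{eq:newT} for $T$, the ``geometric'' contributions cancel exactly, and one is left with a decomposition
\[
N^\top \Omega(K) \Loper{N} - T = R_E(\theta,\varphi) + R_{\Omega_L}(\theta,\varphi),
\]
where $R_E$ collects the terms linear in $E$ and its first derivatives, and $R_{\Omega_L}$ collects the terms in which a factor of $\Omega_L$ appears (coming, for example, from $L^\top \Omega(K) N = \Omega_L A - I_n$ and $N^\top \Omega(K) N = A^\top \Omega_L A$, cf.\ \eqref{eq:LON}--\eqref{eq:NON}).

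For the estimates, $R_{\Omega_L}$ is controlled via Lemma \ref{lem:isotrop} which supplies the decisive factor $\frac{1}{\gamma \delta^{\tau+1}}$; the terms in $R_E$ that depend on $E$ directly are bounded by $\norm{E}_\rho$, while those depending on $\Dif E$ are bounded by $\norm{E}_\rho/\delta$ via Cauchy estimates on the strip $\TT^n_{\rho-\delta}\times \TT^\ell_{\rho-\delta}$. All of these are dominated by $\norm{E}_\rho/(\gamma\delta^{\tau+1})$ at the cost of harmless multiplicative constants, which are then assembled (together with the previously computed $\CL$, $\CLT$, $\CN$, $\CNT$, $\CTildeN$, $\CTildeNT$, $\CA$, and the constants from $H_1$) into the final constant $\CET$ displayed in Table \ref{tab:constants:all}.

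The main difficulty is not analytic but combinatorial: the derivation in Appendix \ref{ssec:torsion} invokes the invariance equation and Lagrangianity multiple times in succession, and each invocation must now be reopened to extract the correction term without breaking a later cancellation. Case III is simpler since $A = O_n$ and the decomposition reduces essentially to $N^\top \Omega(K) \Loper{N} - T = B^\top L^\top [T_h(K) - T_h(K)] L B + (\text{terms in } E, \Omega_L)$, so I would treat that case first as a sanity check and then extend to Case II by keeping the additional $A$-dependent terms, which contribute further occurrences of $\Omega_L$ through $A - A^\top = -\tilde N^\top \Omega(K) \tilde N$ picking up an $\Omega_L$ factor after the frame manipulations.
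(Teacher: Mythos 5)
Your proposal is correct and follows essentially the same route as the paper: retrace the Appendix derivation of $T$, keeping the correction terms left behind each time $\Lie{\omega,\alpha}K = -Z_\H(K)$ or a Lagrangianity identity is invoked, so that the discrepancy $N^\top\Omega\Loper{N} - T$ splits into terms carrying a factor of $\Omega_L$ (controlled by Lemma~\ref{lem:isotrop} at cost $\gamma^{-1}\delta^{-(\tau+1)}$) and terms linear in $E$ or $\Dif_\theta E$ (controlled by Cauchy estimates), which is precisely the decomposition the paper realizes through the chain $E_{\Lie{}B}, E_{\Lie{}\tilde N}, E_{\Lie{}A}, E_{\Lie{}N}$ culminating in $E_T$ of equation~\eqref{eq:ET}. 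The only small imprecisions — the sketchy Case~III formula with the vanishing bracket $[T_h - T_h]$, and the mention of $\Dif_\varphi E$ which the paper never needs — are presentational and do not affect the soundness of the plan.
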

    \begin{proof}
        We consider the invariance equation, 
        \eqref{eq:inv:err}, for an approximately invariant torus, 
        i.e.
        \begin{equation}\label{eq:ApproxLieK}
            \Lie{\omega, \alpha}K(\theta,\varphi)=E(\theta,\varphi) - Z_\H(K(\theta,\varphi) ,\varphi)\,,
        \end{equation}
        from which we obtain
        \begin{equation}\label{eq:ApproxLieDK}
            \Lie{\omega, \alpha}L(\theta,\varphi)= \Dif_{\theta} E(\theta,\varphi) - \Dif_z Z_\H(K(\theta,\varphi) ,\varphi)L(\theta,\varphi)\,.
        \end{equation}
        Using \eqref{eq:ApproxLieDK} the Lie derivative of $J$ is,
        \begin{equation}\label{eq:ApproxLieJ}
            \Lie{\omega, \alpha}J(K(\theta,\varphi))= \Dif_z J(K(\te, \vp))\left[E(\theta,\varphi) \right]- \Dif_z J(K(\theta,\varphi)) \left[Z_\H(K(\te,\vp),\vp)\right] \,.
        \end{equation}
        Therefore, 
        \begin{equation}\label{eq:ApproxLieOmega}
            \begin{split}
                \Lie{\omega, \alpha}\Omega(K(\te,\vp))= &\Dif_{z} \Omega\left( K(\te,\vp) \right)\left[E(\te,\vp)\right] \\
                &+ \left( \Dif_z Z_\H(K(\theta,\varphi) ,\varphi) \right)^\top \Omega(K(\te,\vp))  \\
                &+ \Omega(K(\te,\vp))  \Dif_z Z_\H(K(\theta,\varphi) ,\varphi) \,.
            \end{split}
        \end{equation}
        Taking the Lie derivative of $G$, using
        \eqref{eq:prop:struc1},
        \eqref{eq:ApproxLieJ},
        \eqref{eq:ApproxLieOmega}, 
        and \eqref{eqn:Jacobi-indentity} we have,
        \begin{equation}\label{eq:ApproxLieG}
            \begin{split}
                \Lie{\om, \al}G(K(\te,\vp))%= & -\Dif_{z}\Omega(K(\te,\vp)) \left[E(\theta,\varphi) - \Dif_z Z_\H(K(\theta,\varphi) ,\varphi)\right] J(K(\theta,\varphi))\\
                %&\qquad - \Omega(K(\te,\vp))   \Dif_z J(\te,\vp)  \left[E(\theta,\varphi) - \Dif_z Z_\H(K(\theta,\varphi) ,\varphi)\right] \\
                % =& \Dif_{z}\Omega(K(\te,\vp)) \left [\Dif_z Z_\H(K(\theta,\varphi) ,\varphi)\right] J(K(\theta,\varphi)) \\
                % & +\Omega(K(\te,\vp))   \Dif_z J(\te,\vp)  \left[\Dif_z Z_\H(K(\theta,\varphi) ,\varphi)\right] \\
                %& -\Dif_{z}\Omega(K(\te,\vp)) \left [E(\theta,\varphi) \right] J(K(\theta,\varphi)) \\ 
                %& - \Omega(K(\te,\vp))   \Dif_z J(\te,\vp)  \left[E(\theta,\varphi)\right]  \\
                =& -\left( \Dif_z Z_\H(K(\theta,\varphi) ,\varphi)\right)^{\top}  \Omega(K(\te,\vp)) J(K(\theta,\varphi)) \\
                & -  \Omega(K(\te,\vp))  \Dif_z Z_\H(K(\theta,\varphi) ,\varphi) J(K(\theta,\varphi)) \\
                & +\Omega(K(\te,\vp))   \Dif_z J(K(\te,\vp))  \left[Z_\H(K(\theta,\varphi) ,\varphi)\right] \\
                & -\Dif_{z}\Omega(K(\te,\vp)) \left [E(\theta,\varphi) \right] J(K(\theta,\varphi)) \\ 
                & - \Omega(K(\te,\vp))   \Dif_z J(K(\te,\vp))  \left[E(\theta,\varphi)\right]  \,.
            \end{split}
        \end{equation}
%\begin{equation*}
%\begin{split}
%T_h(\te,\vp)= \Omega(K(\te,\vp)) \Big(& \Dif_z Z_\H(K(\te, \vp), \vp) \\
%    - & \Dif_z J(K(\te, \vp)) \left[Z_\H(K(\te, \vp), \vp) \right] J(K(\te, \vp))^{-1} \\
%    - & J(K(\te, \vp)) \Dif_z Z_\H(K(\te, \vp), \vp) J(K(\te, \vp))^{-1} \Big) \,,
%\end{split}
%\end{equation*}
        To calculate the Lie derivative for B, 
        we use that $I_{d} = B^{-1}(\te,\vp)B(\te,\vp)$, 
        \begin{align*}
            \Lie{\omega,\alpha} B(\te,\vp) %&= -B(\te,\vp) \Lie{\omega,\alpha} B(\te,\vp)^{-1} B(\te,\vp)\\
            &= {}  B(\theta, \varphi) \Lie{\omega,\alpha} \left( L(\theta,\varphi)^\top \right)  \Omega(K(\theta,\varphi)) N(\theta, \varphi)^0  \nonumber \\
            & \quad - B(\theta, \varphi) L(\theta, \varphi)^\top \Lie{\omega,\alpha}\left( G(K(\theta,\varphi)) \right) L(\theta,\varphi) B(\theta, \varphi) \\
            & \quad + N(\theta,\varphi)^{0, \top} \Omega(K(\theta,\varphi)) \Lie{\omega,\alpha}\left(  L(\theta,\varphi) \right) B(\theta, \varphi) \,.
        \end{align*}
        Therefore, the Lie derivative of $B(\theta,\varphi)$ for an approximately invariant torus is,
        \begin{equation}\label{eq:AproxLieB}
            \begin{split}
                \Lie{\om,\al} B(\te,\vp)
                = & \tilde N(\te,\vp)^\top J(K(\te,\vp))^{-\top} T_h(K(\te,\vp),\vp) \tilde N(\te,\vp) + E_{\Lie{} B}(\te,\vp) \,,
            \end{split}
        \end{equation}
        with,
        \begin{equation}\label{eq:ELieB}
            %\begin{split}
            E_{\Lie{} B}(\te,\vp)=  \tilde N(\te,\vp)^\top J(K(\te,\vp))^{-\top}T_E(\te,\vp) \tilde N(\te,\vp)\,,
            %\end{split}
        \end{equation}
        and,
        \begin{equation}\label{eq:Te}
            \begin{split}
                T_E(\te,\vp)=&  \Dif_{z}\Omega(K(\te,\vp)) \left [E(\theta,\varphi) \right]  \\ 
                &+ \Omega(K(\te,\vp))   \Dif_z J(K(\te,\vp))  \left[E(\theta,\varphi)\right]  J(K(\te,\vp))^{-1}  \\
                &+ J(K(\te,\vp))^{\top} \Omega(K(\te,\vp))  L(\te,\vp) B(\te,\vp) \left( \Dif_{\theta} E(\theta,\varphi) \right)^\top  \Omega(K(\te,\vp)) \\ 
                &+ J(K(\te,\vp))^{\top}  \Omega(K(\te,\vp))  \Dif_{\theta} E(\theta,\varphi) B(\theta,\varphi) L(\te,\vp)^{\top} \Omega(K(\te,\vp))  \,,
            \end{split}
        \end{equation}
        where we used equations \eqref{eq:ApproxLieDK}, 
        \eqref{eq:ApproxLieG}, 
        and $I_n = \tilde N(\te,\vp)^\top \Omega(K(\te,\vp))  L(\te,\vp)$.

        Following a similar approach to that used for $B(\te,\vp)$, 
        we determine the Lie derivative for $\tilde N(\te,\vp)$,
        \begin{equation}\label{eq:ApproxLieTildeN}
            \begin{split}
                \Lie{\om,\al} \tilde N(\te,\vp)
%=& \Lie{\om,\al} \left( J(K(\te,\vp)) L(\te,\vp) B(\te,\vp) \right) \\
%= &  -\Dif_z J(K(\te,\vp))\left[ Z_\H\left(K(\te, \vp),\vp \right) \right]L(\te,\vp) B(\te,\vp)\\ 
%   & - J(K(\te,\vp))  \Dif_z Z_\H(K(\theta,\varphi) ,\varphi) L(\te,\vp)  B(\te,\vp) \\
%   & + J(K(\te,\vp)) L(\te,\vp) \tilde N(\te,\vp)^\top J(K(\te,\vp))^{-\top} T_\H(\te,\vp) \tilde N(\te,\vp)\\
%   & + \Dif_z J(K(\te,\vp))\left[ E(\te, \vp) \right]L(\te,\vp) B(\te,\vp)\\ 
%   & + J(K(\te,\vp))  \Dif_\te E(\theta,\varphi) B(\te,\vp) \\
%   & + J(K(\te,\vp)) L(\te,\vp)  E_{\Lie{} B}(\te,\vp) \\
                = & \Big( -\Dif_z J(K(\te,\vp))\left[ Z_\H\left(K(\te, \vp),\vp \right) \right]J(K(\te,\vp))^{-1}\\ 
                & - J(K(\te,\vp))  \Dif_z Z_\H(K(\theta,\varphi) ,\varphi) J(K(\te,\vp))^{-1} \\
                & + J(K(\te,\vp)) L(\te,\vp) \tilde N(\te,\vp)^\top J(K(\te,\vp))^{-\top} T_\H(\te,\vp)\Big) \tilde N(\te,\vp)\\
                & + E_{\Lie{} \tilde N}(\te,\vp)\,,
%   & + \Dif_z J(K(\te,\vp))\left[ E(\te, \vp) \right]L(\te,\vp) B(\te,\vp)\\ 
%   & + J(K(\te,\vp))  \Dif_\te E(\theta,\varphi) B(\te,\vp) \\
%   & + J(K(\te,\vp)) L(\te,\vp)  E_{\Lie{} B}(\te,\vp) \,,
            \end{split}
        \end{equation}
        where,
        \begin{equation}\label{eq:ELieTildeN}
            \begin{split}
                E_{\Lie{} \tilde N}(\te,\vp)=  & \Dif_z J(K(\te,\vp))\left[ E(\te, \vp) \right]L(\te,\vp) B(\te,\vp) + J(K(\te,\vp))  \Dif_\te E(\theta,\varphi) B(\te,\vp) \\
                & + J(K(\te,\vp)) L(\te,\vp)  E_{\Lie{} B}(\te,\vp) \,.
            \end{split}
        \end{equation}
        Now, we calculate $\Lie{\om,\al} A(\te,\vp)$,
        \begin{equation}\label{eq:ApproxLieA}
            \begin{split}
                \Lie{\om,\al} A(\te,\vp)
                & = \bar{\Lie{\omega,\alpha}A(\te,\vp)} + E_{\Lie{} A}(\te,\vp) \,,
            \end{split}
        \end{equation}
        with,
        \begin{equation}\label{eq:barLieA}
            \begin{split}
                & \bar{\Lie{\omega,\alpha}A(\te,\vp)} \\ &
                = \dfrac{1}{2}\tilde N(\te,\vp)^\top\Big( T_\H(\te,\vp)^\top - T_\H(\te,\vp)^\top J(K(\te,\vp))^{-1} \tilde N(\te,\vp) L(\te,\vp)^\top J(K(\te,\vp))^\top \Omega(K(\te,\vp)) \\
                & \quad - T_\H(\te,\vp) - \Omega(K(\te,\vp)) J(K(\te,\vp)) L(\te,\vp) \tilde N(\te,\vp)^\top  J(K(\te,\vp))^{-\top} T_\H(\te,\vp)\Big) \tilde N(\te,\vp)  \,,
            \end{split}
        \end{equation}
        and
        \begin{equation}\label{eq:ELieA}
            \begin{split}
                E_{\Lie{} A}(\te,\vp)= & - \dfrac{1}{2}\Big(E_{\Lie{} \tilde N}(\te,\vp)^\top \Omega(K(\te,\vp)) \tilde N(\te,\vp) + \tilde N(\te,\vp)^\top  \Dif_z \Omega(K(\theta,\varphi)) \left[E(\te,\vp)\right] \tilde N(\te,\vp) \\
                & \quad + \tilde N(\te,\vp)^\top  \Omega(K(\theta,\varphi)) E_{\Lie{} \tilde N}(\te,\vp)  \Big) \,.
            \end{split}
        \end{equation}
Using equations \eqref{eq:ApproxLieDK}, \eqref{eq:ApproxLieTildeN} and \eqref{eq:ApproxLieA}, 
we obtain that the Lie derivative of $N(\theta, \varphi)$ is,
\begin{equation}\label{eq:ApproxLieN}
    \begin{split}
        \Lie{\om,\al} N(\te,\vp)
        = & \dfrac{1}{2} \Dif_z Z_\H(K(\te,\vp) ,\vp) L(\te,\vp) \tilde N(\te,\vp)^\top \Omega(K(\te,\vp))  \tilde N(\te,\vp) \\
        & + \dfrac{1}{2} L(\te,\vp)\tilde N(\te,\vp)^\top \Big(T_\H(\te,\vp)^\top  - T_\H(\te,\vp)  \\ 
        & \quad - T_\H(\te,\vp)^\top  J(K(\te,\vp))^{-1} \tilde N(\te,\vp) L(\te,\vp)^\top J(K(\te,\vp))^\top\Omega(K(\te,\vp))  \\
        & \quad - \Omega(K(\te,\vp)) J(K(\te,\vp)) L(\te,\vp) \tilde N(\te,\vp)^\top J(K(\te,\vp))^{-\top} T_\H(\te,\vp) \Big)\tilde N(\te,\vp) \\
        & - \Big(\Dif_z J(K(\te,\vp))\left[ Z_\H\left(K(\te, \vp),\vp \right) \right]J(K(\te,\vp))^{-1}\\ 
        & \quad + J(K(\te,\vp)) \Dif_z Z_\H(K(\theta,\varphi) ,\varphi)J(K(\te,\vp))^{-1} \\
        & \quad - J(K(\te,\vp)) L(\te,\vp) \tilde N(\te,\vp)^\top J(K(\te,\vp))^{-\top} T_\H(\te,\vp)\Big) \tilde N(\te,\vp)\\ 
        & + E_{\Lie{} N}(\te,\vp) \,,
    \end{split}
\end{equation}
with,
\begin{equation}\label{eq:ELieN}
%\begin{split}
E_{\Lie{} N}(\te,\vp)=  E_{\Lie{} \tilde N}(\te,\vp) + L(\theta,\varphi)E_{\Lie{} A}(\te,\vp)  + \Dif_\te E(\te,\vp) A(\te,\vp) \,.
%\end{split}
\end{equation}
Using the above construction in equation \eqref{eq:T}
and from the definition of torsion, equation  \eqref{eq:newT}, 
we determine the difference for an approximately invariant torus as follows
\begin{equation}\label{eq:ApproxT}
%\begin{split}
 N(\theta,\vp)^\top \Omega(K(\theta,\vp)) \Loper{N}(\theta,\vp)  = T(\te,\vp) + E_T(\te,\vp) \,.
%T(\te,\vp)= & \dfrac{1}{2}\tilde N(\te,\vp)^\top \left(T_h(\te,\vp) +  T_h(\te,\vp)^\top\right) \tilde N(\te,\vp) \\
%& + \tilde N(\te,\vp)^\top T_h(\te,\vp)^\top L(\te,\vp) A(\te,\vp) \\ 
%& - A(\te,\vp)L(\te,\vp)^\top T_h(\te,\vp)\tilde N(\te,\vp) \\
%& + E_{T}(\te,\vp) \,.
%\end{split}
\end{equation}
As we have done before, 
we define the error,
\begin{equation}\label{eq:ET}
\begin{split}
E_T(\te,\vp)= & \dfrac{1}{2} A(\te,\vp)^\top \Omega_L(\theta, \varphi) \tilde N(\te,\vp)^\top \left(T_h(\te,\vp)^\top - T_h(\te,\vp)\right)\tilde N(\te,\vp) \\
& + A(\te,\vp)^\top \Omega_L(\theta, \varphi)  \left(A(\te,\vp) T_h(\te,\vp) \tilde N(\te,\vp) - \tilde N(\te,\vp)^\top T_h(\te,\vp)^\top  L(\te,\vp) A(\te,\vp) \right) \\
& + \left( A(\te,\vp)^\top  L(\te,\vp)^\top + \tilde N(\te,\vp)^\top \right) \Omega(K(\te,\vp)) E_{\Lie{} N}(\te,\vp)  \,.
\end{split}
\end{equation}
%Since the torus is approximately invariant, 
%the $\Omega_L(\te,\vp)$ term is approximately isotropic. 
%Therefore, 
%terms where it appears contribute to the $E_{T}(\te,\vp)$ error.
We estimate the error in equation  \eqref{eq:ApproxT}.
Let us begin with $E_{\Lie{} N}(\te,\vp)$ and its associated terms. 
From equation \eqref{eq:Te}, 
we have 
\begin{equation}\label{eq:CTE}
\begin{split}
\norm{T_E}_{\rho-\de} %\leq&  \norm{\Dif_{z}\Omega}_{\B\times\TT^{\ell}} \norm{E} _{\rho-\de} + \norm{\Omega}_{\B\times\TT^{\ell}} \norm{ \Dif_z J}_{\B\times\TT^{\ell}}  \norm{E}_{\rho-\de} \norm{J^{-1}}_{\B\times\TT^{\ell}}  \\
%  &+ \norm{J^{\top}}_{\B\times\TT^{\ell}} \norm{\Omega}_{\B\times\TT^{\ell}} \norm{L}_\rho \norm{B}_{\B\times\TT^{\ell}} \norm{\Dif_{\theta} E^\top}_{\rho-\de}  \norm{\Omega}_{\B\times\TT^{\ell}} \\ 
%  &+ \norm{J^{\top}}_{\B\times\TT^{\ell}} \norm{ \Omega}_{\B\times\TT^{\ell}} \norm{ \Dif_{\theta} E}_{\rho - \de}  \norm{B}_{\B\times\TT^{\ell}} \norm{L^\top}_\rho \norm{\Omega}_{\B\times\TT^{\ell}}  \\
\leq&  \norm{\Dif_{z}\Omega}_{\B\times\TT^{\ell}} \norm{E} _\rho + \norm{\Omega}_{\B\times\TT^{\ell}} \norm{ \Dif_z J}_{\B\times\TT^{\ell}}   \norm{J^{-1}}_{\B\times\TT^{\ell}} \norm{E}_\rho \\
&+ \dfrac{2n}{\de}\norm{J^{\top}}_{\B\times\TT^{\ell}} \norm{\Omega}_{\B\times\TT^{\ell}} \norm{L}_\rho \norm{B}_{\B\times\TT^{\ell}} \norm{\Omega}_{\B\times\TT^{\ell}} \norm{E}_\rho  \\ 
&+  \dfrac{n}{\de} \norm{J^{\top}}_{\B\times\TT^{\ell}} \norm{ \Omega}_{\B\times\TT^{\ell}} \norm{B}_{\B\times\TT^{\ell}} \norm{L^\top}_\rho \norm{\Omega}_{\B\times\TT^{\ell}}  \norm{E}_\rho  \\
    \leq& \dfrac{\cteDOmega \de +  \cteOmega  \cteDJ  \cteJInv \de  + 2n \cteJT  \CL \sigmaB  \cteOmega^2  +  n  \cteJT   \sigmaB \CLT  (\cteOmega)^2}{\de}  \norm{E}_\rho  \\
=:& \dfrac{\CTE}{\de}   \norm{E}_\rho  \,.
\end{split}
\end{equation}
Analogously for $T_E^\top$, $E_{\Lie{} B}$, $E_{\Lie{} B}^\top$, $E_{\Lie{} \tilde N}$, $E_{\Lie{} \tilde N}^\top$, $E_{\Lie{} A}$, $E_{\Lie{} N}$ and $E_T$, i.e.
\begin{equation}\label{eq:CTET}
\begin{split}
    \norm{T_E^\top}_{\rho-\de} \leq& \dfrac{\cteDOmega \de +  \cteJInvT \cteDJT  \cteOmega \de  + n  \sigmaB \CLT \cteJ \cteOmega^2  +  2n  \CL \sigmaB \cteJ  (\cteOmega)^2}{\de}  \norm{E}_\rho  \\
=:& \dfrac{\CTET}{\de}   \norm{E}_\rho  \,,
\end{split}
\end{equation}

\begin{equation}\label{eq:CELieB}
%\begin{split}
\norm{E_{\Lie{} B}}_{\rho-\de} 
%\leq  \norm{\tilde N^\top}_{\rho-\de} \norm{J^{-\top}}_{\rho-\de} \norm{T_E}_{\rho-\de}\norm{\tilde N}_{\rho-\de}
\leq  \dfrac{\CTildeNT \cteJInvT \CTE \CTildeN}{\de}  \norm{E}_\rho  =: \dfrac{ \CELieB }{\de} \norm{E}_\rho   \,,
%\end{split}
\end{equation}
\begin{equation}\label{eq:CELieBT}
\norm{E_{\Lie{} B}^\top}_{\rho-\de} 
\leq  \dfrac{\CTildeNT \CTET \cteJInv  \CTildeN}{\de}  \norm{E}_\rho  =: \dfrac{ \CELieBT }{\de} \norm{E}_\rho   \,,
\end{equation}
\begin{equation}\label{eq:CELieTildeN}
\begin{split}
\norm{E_{\Lie{} \tilde N}}_{\rho-\de} %\leq  & \norm{\Dif_z J}_{\rho-\de} \norm{E}_{\rho-\de} \norm{L}_{\rho-\de}  \norm{B}_{\rho-\de} \\
%&+ \norm{J}_{\rho-\de} \norm{\Dif_\te E}_{\rho-\de} \norm{B}_{\rho-\de} \\
%& + \norm{J}_{\rho-\de} \norm{L}_{\rho-\de} \norm{E _{\Lie{} B}}_{\rho-\de} \\
\leq  &  \dfrac{\de \cteDJ  \CL  \sigmaB + n \cteJ \sigmaB +\cteJ  \CL  \CELieB }{\de} \norm{E}_\rho =: \dfrac{ \CELietildeN}{\de}\norm{E}_\rho  \,,
\end{split}
\end{equation}
\begin{equation}\label{eq:CELieTildeNT}
\begin{split}
\norm{E_{\Lie{} \tilde N}^\top}_{\rho-\de} &\leq  \dfrac{\de \sigmaB \CLT \cteDJT  + 2n \sigmaB \cteJT  + \CELieBT \CLT \cteJT }{\de} \norm{E}_\rho \\
&=: \dfrac{ \CELietildeNT}{\de}\norm{E}_\rho  \,,
\end{split}
\end{equation}
\begin{equation}\label{eq:CELieA}
\begin{split}
\norm{E_{\Lie{} A}}_{\rho-\de} %\leq &  \dfrac{1}{2}\Big(\norm{E_{\Lie{} \tilde N}^\top}_{\rho-\de} \norm{\Omega}_{\rho-\de} \norm{\tilde N}_{\rho-\de} \\
%  & \quad + \norm{\tilde N^\top}_{\rho-\de} \norm{\Dif_z \Omega}_{\rho-\de} \norm{E}_{\rho-\de} \norm{\tilde N}_{\rho-\de} \\
% & \quad + \norm{\tilde N^\top}_{\rho-\de} \norm{\Omega}_{\rho-\de} \norm{E_{\Lie{}\tilde N}}_{\rho-\de}  \Big) \\
\leq &  \dfrac{1}{2}\left( \dfrac{ \CELietildeNT \cteOmega \CTildeN + \de\CTildeNT\cteDOmega\CTildeN + \CTildeNT\cteOmega \CELietildeN}{\de}\right)\norm{E}_\rho \\
 =: & \dfrac{ \CELieA}{\de}\norm{E}_\rho \,,
\end{split}
\end{equation}
\begin{equation}\label{eq:CELieN}
\begin{split}
\norm{E_{\Lie{} N}}_{\rho-\de} \leq &\norm{E_{\Lie{} \tilde N}}_{\rho-\de}  + \norm{L}_{\rho-\de}\norm{E_{\Lie{} A}}_{\rho-\de}  + \norm{\Dif_\te E}_{\rho-\de}\norm{A}_{\rho-\de} \\
\leq & \dfrac{ \CELietildeN + \CL \CELieA + n \CA }{\de}\norm{E}_\rho =: \dfrac{ \CELieN }{\de}\norm{E}_\rho \,, 
\end{split}
\end{equation}
and
\begin{equation}\label{eq:CET}
\begin{split}
\norm{E_T}_{\rho-2\de}%\leq & \dfrac{1}{2}\norm{A}_{\rho} \norm{\Omega_L}_{\rho-2\de}  \norm{\tilde N^\top}_{\rho} \norm{T_h^\top}_{\rho} \norm{ \tilde N}_{\rho} \\
%& +\dfrac{1}{2}\norm{A}_\rho \norm{\Omega_L}_{\rho-2\de}  \norm{\tilde N^\top}_\rho \norm{T_h}_\rho \norm{\tilde N}_\rho \\
%& + \norm{A}_\rho \norm{ \Omega_L}_{\rho-2\de} \norm{A} \norm{T_h}_{\rho-\de} \norm{\tilde N}_{\rho-\de} \\
%& + \norm{A}_\rho \norm{ \Omega_L}_{\rho-2\de}\norm{\tilde N^\top}_\rho \norm{T_h^\top}_\rho \norm{L}_\rho \norm{A}_\rho \\
%& + \norm{A}_\rho \norm{L^\top}_\rho \norm{\Omega}_\rho \norm{E_{\Lie{} N}}_{\rho-\de} \\
%& + \norm{ \tilde N^\top}_\rho \norm{\Omega}_\rho \norm{E_{\Lie{} N}}_{\rho-\de} \\
\leq & \dfrac{1}{2} \frac{\CA \COmegaL \CTildeNT \CTHT  \CTildeN}{\gamma \delta^{\tau+1}}  \norm{E}_\rho \\
& + \dfrac{1}{2} \frac{\CA \COmegaL \CTildeNT  \CTH  \CTildeN}{\gamma \delta^{\tau+1}}  \norm{E}_\rho \\
& + \frac{\CA \COmegaL \CA  \CTH  \CTildeN}{\gamma \delta^{\tau+1}}  \norm{E}_\rho \\
& + \frac{\CA \COmegaL \CTildeNT  \CTHT  \CTildeN \CL \CA}{\gamma \delta^{\tau+1} }  \norm{E}_\rho \\
& + \frac{\gamma \delta^\tau \CA \CLT \cteOmega \CELieN }{\gamma \delta^{\tau+1}}  \norm{E}_\rho \\
& + \frac{ \gamma \delta^\tau \CTildeNT  \cteOmega \CELieN }{\gamma \delta^{\tau+1}}  \norm{E}_\rho =:   \frac{ \CET }{\gamma \delta^{\tau+1}}  \norm{E}_\rho\,.
\end{split}
\end{equation}
To control equation \eqref{eq:ApproxT}, 
we use the previous estimates made previously, i.e.,
\begin{equation}\label{eq:CApproxT}
%\begin{split}
\norm{ N^\top\Omega \Loper{N} - T}_{\rho-2\de}\leq  \frac{ \CET }{\gamma \delta^{\tau+1}}  \norm{E}_\rho \,.
%& \dfrac{1}{2}\left(\norm{\tilde N^\top}_\rho \norm{T_h}_\rho \norm{\tilde N}_\rho  +\norm{\tilde N^\top}_\rho \norm{T_h^\top}_\rho \norm{\tilde N}_\rho\right) \\
%& + \norm{\tilde N^\top}_\rho \norm{T_h^\top}_\rho \norm{L}_\rho \norm{A}_\rho + \norm{A}_\rho\norm{L^\top}_\rho \norm{T_h}_\rho \norm{\tilde N}_\rho \\
%& + \norm{E_{T}}_{\rho-2\de} \\
%\leq & \dfrac{1}{2}\left(\CTildeNT \CTH \CTildeN +\CTildeNT \CTHT \CTildeN  \right) \\
%& + \CTildeNT \CTHT \CL\CA  + \CA \CLT \CTH \CTildeN  \\
%& + \frac{ \CET }{\gamma \delta^{\tau+1}}  \norm{E}_\rho =:  \CT \,
%\end{split}
\end{equation}
Refer to Table \ref{tab:constants:all} for the latest set of constants.
\end{proof}

\subsection{Approximate reducibility}

The following lemma estimates the reducibility error obtained after replacing the derivative of the vector field at the approximately invariant torus in the linearized equation for an upper triangular matrix in a new symplectic coordinate system.

\begin{lemma}\label{lem:reduc}
With the hypotheses of 
Theorem \ref{theo:KAM}, %{\color{magenta}and let us assume that
%\begin{equation}\label{eq:nuControl}
%\dfrac{\norm{E}_\rho}{\delta} < \nu\,,
%\end{equation}
%where $\nu$ is an independent constant.}
the map
$P:\TT^d \times \TT^\ell \to \RR^{2n \times 2n}$, characterized in Lemma \ref{lem:sympl},
approximately reduces the linearized equation associated with the vector field
 $\Dif Z_\H \circ K$ to a
block-triangular matrix, i.e. the error map
\begin{equation}\label{eq:Ered}
\Ered (\theta,\varphi) :=
-\Omega_0 P(\theta,\varphi)^\top \Omega(K(\theta,\varphi))
\left(
\Dif_z Z_\H(K(\theta,\varphi)) P(\theta,\varphi)
+\Lie{\omega,\alpha}P(\theta,\varphi)
\right) - \Lambda(\theta,\varphi)\,,
\end{equation}
with
\begin{equation}\label{eq:Lambda}
\Lambda(\theta,\varphi)
= \begin{pmatrix}
O_n &  T(\theta,\varphi) \\ 
O_n  & O_n
\end{pmatrix}
\end{equation}
and $T(\theta,\varphi)$ is given by \eqref{eq:newT}, is small in the following sense:
\[
\norm{\Ered}_{\rho-2\delta} \leq \frac{\Cred}{\gamma \delta^{\tau+1}}
\norm{E}_\rho\,,
\]
where the constant $\Cred$ is provided in Table \ref{tab:constants:all}.
\end{lemma}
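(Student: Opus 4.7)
The plan is to expand $\Ered$ using the block decomposition $P = (L, N)$ and treat each of the four resulting blocks in turn. Writing $-\Omega_0$ as a block matrix and recalling that $\Loper{V} = \Dif_z Z_\H(K)V + \Lie{\omega,\alpha} V$, a direct block multiplication yields
\begin{equation*}
\Ered = \begin{pmatrix} N^\top \Omega(K)\Loper{L} & N^\top \Omega(K)\Loper{N} - T \\ -L^\top \Omega(K)\Loper{L} & -L^\top \Omega(K)\Loper{N} \end{pmatrix},
\end{equation*}
so it suffices to bound each of the four blocks by a suitable multiple of $\norm{E}_\rho/(\gamma\delta^{\tau+1})$.

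Three of the four blocks are essentially at hand. Identity \eqref{eqn:LiesDK} gives $\Loper{L} = \Dif_\theta E$, so Cauchy's inequality combined with the uniform norm bounds on $L$, $N$ and $\Omega\circ K$ from $H_1$ and Lemmas \ref{lem:Lang}--\ref{lem:sympl} controls both blocks in the left column (these only need a $1/\delta$ factor, which is dominated by $1/(\gamma\delta^{\tau+1})$). For the top-right block, Lemma \ref{lem:constT} directly delivers $\norm{N^\top\Omega(K)\Loper{N}-T}_{\rho-2\delta} \leq \CET\norm{E}_\rho/(\gamma\delta^{\tau+1})$, which is already of the required form.

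The main obstacle is the bottom-right block $-L^\top\Omega(K)\Loper{N}$, since $\Loper{N}$ contains the Lie derivative of the elaborate object $N$ and a direct bound would essentially reproduce the analysis underlying Lemma \ref{lem:constT}. My plan is to sidestep this via a Leibniz manipulation combined with the Jacobi-type identity \eqref{eqn:Jacobi-indentity}. Expanding $\Lie{\omega,\alpha}(L^\top\Omega(K) N)$, substituting $\Loper{L} - \Dif_z Z_\H L$ for $\Lie{\omega,\alpha} L$, $\Loper{N} - \Dif_z Z_\H N$ for $\Lie{\omega,\alpha} N$, and $\Dif_z\Omega(K)[E - Z_\H(K)]$ for $\Lie{\omega,\alpha}\Omega(K)$, one observes that the triple of $Z_\H$-dependent terms $L^\top\bigl[(\Dif_z Z_\H)^\top\Omega + \Dif_z\Omega[Z_\H] + \Omega\Dif_z Z_\H\bigr] N$ cancels identically by \eqref{eqn:Jacobi-indentity}. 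Rearranging yields
\begin{equation*}
L^\top\Omega(K)\Loper{N} = \Lie{\omega,\alpha}\bigl(L^\top\Omega(K) N\bigr) - (\Dif_\theta E)^\top\Omega(K) N - L^\top\Dif_z\Omega(K)[E]\,N.
\end{equation*}
By \eqref{eq:LON} we have $L^\top\Omega(K) N = \Omega_L A - I_n$, so the surviving Lie derivative equals $(\Lie{\omega,\alpha}\Omega_L)A + \Omega_L(\Lie{\omega,\alpha}A)$. Its first factor is controlled by the pre-R\"ussmann estimate $\norm{\Lie{\omega,\alpha}\Omega_L}_{\rho-\delta} \leq C_{\Lie{}\Omega_L}\norm{E}_\rho/\delta$ that is already derived inside the proof of Lemma \ref{lem:isotrop}, and its second factor by \eqref{eq:estOmegaK} combined with the formula \eqref{eq:ApproxLieA} for $\Lie{\omega,\alpha}A$. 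Case \textbf{III} ($A = O_n$) simplifies the bottom-right block to a pure Cauchy estimate and needs no R\"ussmann bound. Assembling the four block estimates, retaining only the linear-in-$\norm{E}_\rho$ part, and taking the worst behaviour in $\delta$ produces the constant $\Cred$ and completes the proof.
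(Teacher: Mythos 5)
Your proposal follows the paper's proof in all essential respects: the same four-block decomposition of $\Ered$ via $P=(L,N)$, the identity $\Loper{L}=\Dif_\theta E$ for the left column, Lemma~\ref{lem:constT} for the top-right block, and the Leibniz-plus-Jacobi manipulation of $\Lie{\omega,\alpha}(L^\top\Omega(K)N)=\Lie{\omega,\alpha}(\Omega_L A-I_n)$ for the bottom-right block, leading to exactly the paper's formula \eqref{eq:Ered22}. One spot where you should be more careful: ``retaining only the linear-in-$\norm{E}_\rho$ part'' is not a free step. In $\Omega_L\Lie{\omega,\alpha}A=\Omega_L\bar{\Lie{\omega,\alpha}A}+\Omega_L E_{\Lie{}A}$, both $\Omega_L$ and $E_{\Lie{}A}$ are already $O(\norm{E}_\rho)$, so if you bound $\Omega_L$ by \eqref{eq:estOmegaK} in \emph{both} pieces you end up with a genuinely quadratic contribution, and Lemma~\ref{lem:reduc} has no smallness hypothesis with which to absorb it. The fix — used in the paper's \eqref{eq:OmegaLCLieA} — is to bound $\Omega_L$ by the trivial $O(1)$ estimate $\norm{L^\top}\norm{\Omega}\norm{L}\le\sigmaDKT\cteOmega\sigmaDK$ when pairing it with $E_{\Lie{}A}$, and only use the R\"ussmann-based bound \eqref{eq:estOmegaK} against the $O(1)$ term $\bar{\Lie{\omega,\alpha}A}$; then every block estimate is honestly linear in $\norm{E}_\rho$ and the claimed form follows with no hidden hypothesis.
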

\begin{proof}

By employing the notation introduced in equation \eqref{eq:Loper}, 
we represent the block components of equation \eqref{eq:Ered}, 
denoted as $\Ered^{i,j}(\theta,\varphi)$, as follows:
\begin{align}
\Ered^{1,1}(\theta, \varphi) = {} & 
N(\theta, \varphi)^\top \Omega(K(\theta, \varphi)) \Loper{L}(\theta, \varphi) \,, \label{eq:Ered11} \\
\Ered^{1,2}(\theta, \varphi) = {} & 
N(\theta, \varphi)^\top \Omega(K(\theta, \varphi)) \Loper{N}(\theta, \varphi) - T(\theta, \varphi)  \,,
\label{eq:Ered12} \\
\Ered^{2,1}(\theta, \varphi) = {} & 
-L(\theta, \varphi)^\top \Omega(K(\theta, \varphi)) \Loper{L}(\theta, \varphi) \,, 
\label{eq:Ered21} \\
\Ered^{2,2}(\theta, \varphi) = {} & 
-L(\theta, \varphi)^\top \Omega(K(\theta, \varphi)) \Loper{N}(\theta, \varphi) \,.
\end{align}
In order to control the reducibility error, 
we estimate its components. 
We notice the presence of vector operators such as $\Loper{L}$ or $\Loper{N}$. 
Consequently, 
our next step involves estimating these operators. 
First, 
we look for an expression of the tangent vector bundle in terms of the invariance error.
We find that,
\begin{align}
 \Loper{L}(\theta, \varphi)  = {} & D_{z}Z_{h}\left( K(\theta, \varphi), \varphi \right)L(\theta, \varphi) +  \Lie{\omega, \alpha}  L(\theta, \varphi) \nonumber \\
  %= {} & D_{z}Z_{h}\left( K(\theta, \varphi), \varphi \right)D_{\theta}K(\theta, \varphi) +  \Lie{\omega, \alpha}  D_{\theta}K(\theta, \varphi) \nonumber \\
 % = {} & D_{\theta}\left( Z_{h}\left( K(\theta, \varphi), \varphi \right)\right) +  D_{\theta}\left( \Lie{\omega, \alpha}  K(\theta, \varphi) \right)\nonumber \\
%  = {} & D_{\theta}\left( Z_{h}\left( K(\theta, \varphi), \varphi \right) + \Lie{\omega, \alpha}  K(\theta, \varphi) \right) \nonumber \\
  = {} & D_{\theta} E(\theta, \varphi).
\end{align}
Similarly, 
considering the transpose of the tangent vector bundle, 
we observe that
\begin{align}
 \Loper{L}^{\top}(\theta, \varphi)  = {} & L(\theta, \varphi)^{\top} D_{z}Z_{h}\left( K(\theta, \varphi), \varphi \right)^{\top} +  \Lie{\omega, \alpha}  L(\theta, \varphi)^{\top} \nonumber \\
  = {} & \left(D_{\theta} E(\theta, \varphi) \right)^{\top}\,.
\end{align}
Then, the objects, $\Loper{L}(\theta, \varphi)$ and $\Loper{L}(\theta, \varphi)^\top$ are controlled as follows
\begin{align}
\norm{\Loper{L}}_{\rho-\delta} \leq {} & \frac{n}{\delta} \norm{E}_\rho =: \frac{\CLoperL}{\delta} \norm{E}_\rho \,, \label{eq:CLoperL} \\
\norm{\Loper{L}^\top}_{\rho-\delta} \leq {} & \frac{2n}{\delta} \norm{E}_\rho =: \frac{\CLoperLT}{\delta} \norm{E}_\rho \,. \label{eq:CLoperLT}
\end{align}

Now, we take the Lie derivative of both sides on the expression \eqref{eq:LON}.
\[
\Lie{\omega, \alpha}\Big( L(\theta, \varphi)^\top \Omega(K(\theta, \varphi)) \Big) N(\theta, \varphi) +
L(\theta, \varphi)^\top \Omega(K(\theta, \varphi)) \left( \Lie{\omega,\alpha}N(\theta, \varphi) \right) =
\Lie{\omega, \alpha} (\Elag(\theta) A(\theta))\,.
\]
Substituting the last expression into $\Ered^{2,2}(\theta, \varphi)$,
and incorporating \eqref{eq:invE} as well as the geometric property \eqref{eqn:Jacobi-indentity}, 
we have the following,
\begin{align}
\Ered^{2,2}(\theta, \varphi) %= {} & -L(\theta, \varphi)^{\top}\Omega(K(\theta, \varphi)) \left(D_{z}Z_{h}(K(\theta, \varphi), \varphi)N(\theta, \varphi) + \Lie{\omega, \alpha}N(\theta, \varphi) \right) \nonumber \\
    {} &= L(\theta, \varphi)^{\top} D_{z}\Omega(K(\theta, \varphi)) \left[E(\theta,\varphi) \right] N(\theta, \varphi) + \Loper{L}(\theta,\varphi)^\top \Omega(K(\theta, \varphi)) N(\theta, \varphi) \nonumber \\
 %   {} &  - \Lie{\omega, \alpha} (\Omega_{L}(\theta, \varphi))A(\theta, \varphi) - \Omega_{L}(\theta, \varphi)\Lie{\omega, \alpha} (A(\theta, \varphi)) \nonumber \\
%     {} &= L(\theta, \varphi)^{\top} D_{z}\Omega(K(\theta, \varphi)) \left[E(\theta,\varphi) \right] N(\theta, \varphi) + \Loper{L}(\theta,\varphi)^\top \Omega(K(\theta, \varphi)) N(\theta, \varphi) \nonumber \\
    {} & \quad  - \Lie{\omega, \alpha} (\Omega_{L}(\theta, \varphi))A(\theta, \varphi) - \Omega_L(\theta, \varphi)\Lie{\omega, \alpha} (A(\theta, \varphi))\,.
\label{eq:Ered22}
\end{align}
We observe that, 
to control $\Ered^{2,2}$,
we only need to estimate $\Omega_L \Lie{\omega, \alpha} A$.
That is,
\begin{equation}\label{eq:OmegaLCLieA}
\begin{split}
    \norm{\Omega_L \Lie{\om,\al}  A}_{\rho-2\delta} \leq & \norm{\Omega_L \Lie{\om,\al}  \bar{A} }_{\rho-2\delta} + \norm{\Omega_L E_{\Lie{}  A} }_{\rho-2\delta} \\
    \leq & \norm{\Omega_L}_{\rho-2\delta}\norm{\Lie{\om,\al}  \bar{A} }_{\rho-\delta} + \norm{\Omega_L}_{\rho-2\delta} \norm{E_{\Lie{}  A} }_{\rho-\delta} \\
    %\leq & \dfrac{C_{\Omega_L} \CLieA}{\gamma \delta^{\tau + 1}}\Vert E \Vert_{\rho} + \sigmaDKT\cteOmega\sigmaDK\norm{E_{\Lie{}A }}_{\rho -\delta}\\
    \leq & \dfrac{C_{\Omega_L} \CLieA + \gamma\delta^\tau\sigmaDKT\cteOmega\sigmaDK \CELieA }{\gamma \delta^{\tau + 1}}\norm{E}_{\rho} \,,
\end{split}
\end{equation}
where,
we use expression \eqref{eq:barLieA},
%%For the estimation of $\Lie{\omega, \alpha} A$, 
%%we use expression \eqref{eq:ApproxLieA}, then
\begin{equation}\label{eq:CLieA}
\begin{split}
    \norm{\Lie{\om,\al}  \bar A}_{\rho-\delta}
    %\leq &  \norm{\Lie{\om,\al}  \bar A}_{\rho-\delta} +\norm{E_{\Lie{}A} }_{\rho-\delta} \\
    %\leq &  \dfrac{1}{2}\left( \norm{\tilde N^\top}_\rho \norm{J^{-\top}}_\rho \norm{\Dif_z J ^\top}_\rho\norm{Z_\H}_\rho \norm{\Omega}_\rho \norm{\tilde N}_\rho 
    %+  \norm{\tilde N^\top} \norm{J^{-\top}} \norm{(\Dif_z  Z_\H)^\top} \norm{J^\top} \norm{\Omega} \norm{\tilde N} \right. \\
    %& + \norm{ \tilde N^\top}\norm{T_\H^\top} \norm{J^{-1}} \norm{\tilde N} \norm{L^\top} \norm{ J^\top} \norm{\Omega} \norm{\tilde N} 
    %+ \norm{ \tilde N^\top} \norm{ \left( \Dif_z Z_\H \right)^\top } \norm{\Omega} \norm{N}  \\
    %& + \norm{ \tilde N^\top} \norm{T_\H} \norm{  \tilde N} 
    %+ \left.\norm{ \tilde N^\top} \norm{ \Omega} \norm{J}\norm{L} \norm{\tilde N^\top} \norm{J^{-\top}}\norm{T_\H} \norm{\tilde N}\right) \\
    \leq &  \dfrac{1}{2}\left(\CTildeNT \cteTHT \CTildeN +  \CTildeNT \cteTHT \cteJInv \CTildeN \CLT \cteJT \cteOmega \CTildeN  
     \right. \\
    & {}  + \left. \CTildeNT \cteTH \CTildeN  +   \CTildeNT \cteOmega \cteJ \CL \CTildeNT \cteJInvT \cteTH \CTildeN \right) \\
    %%+ & \frac{\CELieA}{\delta} \norm{E}_\rho
    &=: \CLieA \,.%% + \frac{\CELieA}{\delta} \norm{E}_\rho
\end{split}
\end{equation}
%Now, 
%we control the last term of equation \eqref{eq:Ered22}, 
We note that the component $\Ered^{1,2}(\te, \vp)$ coincides with equation \eqref{eq:ApproxT}, i.e.
\begin{equation}\label{eq:Ered12}
\Ered^{1,2}(\theta, \varphi) = E_T(\theta, \varphi)\,.
\end{equation}
%We observe that the component coincide \Ered^{1,2}(\theta, \varphi)  with equation \eqref{eq:ApproxT}, i.e.
%In the last equality, 
%we use the fact that $\Omega_{L}(\theta, \varphi) = \Omega_{K}(\theta, \varphi)$, 
%since the torus is Lagrangian.
Finally, we estimate the norms of the block components of $\Ered$
using the expressions \eqref{eq:Ered11}, \eqref{eq:Ered12}, \eqref{eq:Ered21} and \eqref{eq:Ered22},
and the previous estimates:
\begin{align}
\norm{\Ered^{1,1}}_{\rho-2\delta} \leq {} &
%\norm{N^\top}_\rho \norm{\Omega}_\B \norm{\Loper{L}}_{\rho-\delta}\leq 
\frac{\CNT \cteOmega \CLoperL}{\delta} \norm{E}_\rho =: 
\frac{\Creduu}{\delta}\norm{E}_\rho
\,, \label{eq:Creduu} \\
\norm{\Ered^{1,2}}_{\rho-2\delta} \leq  {} &  \frac{ \CET }{\gamma \delta^{\tau+1}}  \norm{E}_\rho =:  \frac{\Credud}{\gamma \delta^{\tau+1}}  \norm{E}_\rho  \,,  \label{eq:Credud} \\
\norm{\Ered^{2,1}}_{\rho-2\delta} \leq {} & %\norm{L^\top}_\rho \norm{\Omega}_\B \norm{\Loper{L}}_{\rho-\delta} \leq 
\frac{\CLT \cteOmega \CLoperL}{\delta} \norm{E}_\rho 
=:
\frac{\Creddu}{\delta}\norm{E}_\rho
\,, \label{eq:Creddu} \\
\norm{\Ered^{2,2}}_{\rho-2\delta} \leq {} &
\left( \CLT \cteDOmega \CN + \frac{\CLoperLT \cteOmega \CN }{\delta}
+ \frac{\CLieOmegaL \CA}{\delta} +\frac{\Clag \CLieA  + \gamma\delta^\tau\sigmaDKT\cteOmega\sigmaDK \CELieA}{\gamma \delta^{\tau+1}} \right)\norm{E}_\rho \nonumber \\
=: {} & \frac{\Creddd}{\gamma \delta^{\tau+1}} \norm{E}_\rho
 \,.
\label{eq:Creddd}
\end{align}
Then, we end up with
\begin{equation} \label{eq:CEred}
\norm{\Ered}_{\rho-2\delta} \leq 
\frac{\max \{
\Creduu \gamma \delta^\tau + \Credud
\, , \,
\Creddu \gamma \delta^\tau + \Creddd \} }{\gamma \delta^{\tau+1}}
\norm{E}_\rho 
=: \frac{\Cred}{\gamma \delta^{\tau+1}} \norm{E}_\rho\,,
\end{equation}
thus completing the proof.
\end{proof}
\section{Proof of the Lagrangian KAM theorem}\label{sec:proof:KAM}

In this section we present a proof of Theorem \ref{theo:KAM}.
For convenience, we start by outlining the scheme used
to correct the parameterization of the torus. That is,
in Section \ref{ssec:qNewton} we discuss the approximate solution of the
linearized equations in the symplectic frame constructed in Section 
\ref{ssec:symp}. This establishes a  method to
obtain a solution of the invariance equation. In Section
\ref{ssec:iter:lemmas} we produce quantitative estimates for
the objects obtained when performing one iteration of the
previous procedure. Finally, in Section
\ref{ssec:proof:KAM} we discuss the convergence of the quasi-Newton method.

\subsection{The quasi-Newton method}\label{ssec:qNewton}

As it is usual in the a-posteriori approach in KAM theory, the argument consists
in refining an approximate solution $K(\te, \vp)$, 
by means of a quasi-Newton method. 
Let us consider the equations associated with the invariance error \eqref{eq:invE}.

We obtain the new parameterization
$\bar K(\te,\vp)= K(\te,\vp)+\DeltaK(\te,\vp)$
by considering the linearized equation
\begin{equation}\label{eq:lin1}
\Dif Z_\H (K(\te,\vp),\vp) \DeltaK(\te,\vp) + \Lie{\om,\al}\DeltaK(\te,\vp) 
= - E(\te,\vp) \,.
\end{equation}
If we obtain a good enough approximation of the solution
$\DeltaK(\te,\vp)$
of \eqref{eq:lin1},
then $\bar K(\te,\vp)$ provides a parameterization
of an approximately invariant torus with frequency $(\omega, \alpha)$,
having a quadratic error in terms of $E(\te,\vp)$. 

To solve the linearized equation \eqref{eq:lin1}, 
we make use of the approximately symplectic
frame $P(\te,\vp)$, defined on the full tangent
space, which has been characterized in Section \ref{sec:lemmas}
(see Lemma \ref{lem:sympl}).
In particular, we introduce the linear change of coordinates,
\begin{equation}\label{eq:choice:DK}
\DeltaK (\te,\vp) = P(\te,\vp) \xi(\te,\vp) \,,
\end{equation}
where $\xi(\te,\vp)$ is the new unknown.
Taking into account this expression, 
the linearized equation becomes
\begin{equation} \label{eq:lin:1E} 
\begin{split}
\left( \Dif Z_\H (K(\te,\vp),\vp) P(\te,\vp) \right.+ & \left.\Lie{\om, \al}P(\te,\vp) \right) \xi(\te,\vp) \\ 
+ &P(\te,\vp) \Lie{\om,\al} \xi(\te,\vp) =   - E(\te,\vp) \,, 
\end{split}
\end{equation}
We now multiply both sides of \eqref{eq:lin:1E} by $-\Omega_0 P(\te,\vp)^\top
\Omega(K(\te,\vp))$, and we 
use the
geometric properties in Lemma \ref{lem:sympl} and
Lemma \ref{lem:reduc}, thus obtaining the equivalent equations:
\begin{equation} \label{eq:lin:1Enew}
\begin{split}
\left(\Lambda(\te,\vp)+\Ered(\te,\vp) \right) \xi(\te,\vp)
+ {} & \left( I_{2n} - \Omega_0 \Esym(\te,\vp)  \right) \Lie{\om, \al} \xi(\te,\vp) \\ 
= {} &
\Omega_0 P(\te,\vp)^\top \Omega(K(\te,\vp)) E(\te,\vp):=
\begin{pmatrix}
\eta^L(\te,\vp) \\
\eta^N(\te,\vp)
\end{pmatrix}
\,,
\end{split}
\end{equation}
$\Lambda(\te,\vp)$ is the triangular matrix-valued map given
in \eqref{eq:Lambda}.
We notice that $\aver{\eta^N}=0_n$ from the third equation in \eqref{eq:propL}.

Then, it turns out that the solution of \eqref{eq:lin:1Enew}
are approximated by the solutions of a triangular system
that involves solving two cohomological equations of
the form \eqref{eq:calL} consecutively.
Quantitative estimates for the solutions of 
such equations are obtained by applying 
R\"ussmann estimates.
This is summarized
in the following standard statement.

\begin{lemma}[Upper triangular equations]\label{lem:upperT}
Let 
$(\om, \al) \in \Dioph{\gamma}{\tau}$
and let us consider a map
$\eta= (\eta^L,\eta^N) : \TT^n \times \TT^\ell \to
\RR^{2n} \simeq \RR^n\times\RR^n$, with components in $\Anal(\TT^n_\rho\times \TT^\ell_\rho)$, and a 
map $T : \TT^n \times \TT^\ell \rightarrow \RR^{n\times n}$,
with components in $\Anal(\TT^n_\rho \times \TT^\ell_\rho)$.
Assume that $T$ satisfies the non-degeneracy condition $\det
\aver{T} \neq 0$ and $\eta$ satisfies the compatibility condition $\aver{\eta^N}=0_n$.
Then, for any $\xi^L_{0,0}\in \RR^n$, the system of equations
\begin{equation} \label{eq:lin:last1}
\begin{pmatrix}
O_n & T(\te,\vp) \\
O_n & O_n
\end{pmatrix}
\begin{pmatrix}
\xi^L(\te,\vp) \\
\xi^N(\te,\vp)
\end{pmatrix}
+
\begin{pmatrix}
\Lie{\om, \al} \xi^L(\te,\vp) \\
\Lie{\om, \al} \xi^N(\te,\vp)
\end{pmatrix}
=
\begin{pmatrix}
\eta^L(\te,\vp) \\
\eta^N(\te,\vp)
\end{pmatrix}
\end{equation}
has a solution of the form
\begin{align*}
\xi^N(\te,\vp)={} & \xi^N_{0,0} + \R{\om,\al}(\eta^N(\te,\vp)) \,, \\
\xi^L(\te,\vp)={} & \xi^L_{0,0} +\R{\om,\al}(\eta^L(\te,\vp) - T(\te,\vp)
\xi^N(\te,\vp)) \,, 
\end{align*}
where
\[
\xi^N_{0,0}= \aver{T}^{-1} \aver{\eta^L- T \R{\omega}(\eta^N)} 
\]
and $\R{\omega,\alpha}$ is given by \eqref{eq:small:formal}. 
Moreover, 
we have the estimates
\begin{align*}
& |\xi_{0,0}^N| \leq \Abs{\aver{T}^{-1}}
\Big(
\norm{\eta^L}_\rho
+ \frac{c_R}{\gamma \delta^\tau} \norm{T}_\rho \norm{\eta^N}_\rho
\Big)\, , \\
& \norm{\xi^N}_{\rho-\delta} \leq |\xi_{0,0}^N| + \frac{c_R}{\gamma \delta^\tau}
\norm{\eta^N}_\rho\, , \\
& \norm{\xi^L}_{\rho-2\delta} \leq |\xi_{0,0}^L| + \frac{c_R}{\gamma \delta^\tau}
\Big(
\norm{\eta^L}_{\rho-\delta} + 
\norm{T}_\rho \norm{\xi^N}_{\rho-\delta}
\Big)\, .
\end{align*}
\end{lemma}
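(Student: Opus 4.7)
The plan is to solve the triangular system from bottom to top, exploiting that the cohomological equations appearing in each block can be handled by R\"ussmann's Lemma \ref{lem:Russmann}. First I would look at the lower block, $\Lie{\om,\al}\xi^N = \eta^N$. Since the compatibility hypothesis $\aver{\eta^N}=0_n$ is given, Lemma \ref{lem:Russmann} yields a family of solutions $\xi^N(\te,\vp) = \xi^N_{0,0} + \R{\om,\al}(\eta^N)(\te,\vp)$ parameterized by the (still undetermined) average $\xi^N_{0,0} \in \RR^n$.

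Next I would substitute this into the upper block to get $\Lie{\om,\al}\xi^L = \eta^L - T\,\xi^N$. For this second cohomological equation to be solvable, the right-hand side must have zero average. Averaging and using linearity of $\aver{\cdot}$ gives the linear condition
\[
\aver{T}\,\xi^N_{0,0} = \aver{\eta^L - T\R{\om,\al}(\eta^N)}\,,
\]
and the non-degeneracy hypothesis $\det\aver{T}\neq 0$ lets me invert $\aver{T}$ to fix $\xi^N_{0,0}$ as stated in the lemma. With this choice, a second application of Lemma \ref{lem:Russmann} produces $\xi^L(\te,\vp) = \xi^L_{0,0} + \R{\om,\al}(\eta^L - T\xi^N)(\te,\vp)$ with $\xi^L_{0,0}\in\RR^n$ arbitrary, which is exactly the form announced.

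For the quantitative estimates I would simply chain the Banach algebra property of the analytic norm with R\"ussmann's estimate. The bound on $|\xi^N_{0,0}|$ comes from the identity above by using $|\aver{u}|\leq \norm{u}_\rho$, the submultiplicativity $\norm{T\R{\om,\al}(\eta^N)}_\rho \leq \norm{T}_\rho\,\norm{\R{\om,\al}(\eta^N)}_\rho$, and the small-divisor estimate $\norm{\R{\om,\al}(\eta^N)}_{\rho-\delta}\leq (c_R/\gamma\delta^\tau)\norm{\eta^N}_\rho$ (and then letting $\delta\to 0$ on the average-only term, as averages do not shrink the analyticity strip). The bound on $\norm{\xi^N}_{\rho-\delta}$ is immediate from the decomposition. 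The bound on $\norm{\xi^L}_{\rho-2\delta}$ follows by applying Lemma \ref{lem:Russmann} on the already-shrunk strip $\TT^n_{\rho-\delta}\times\TT^\ell_{\rho-\delta}$, incurring a second loss $\delta$ and the Banach-algebra factor $\norm{T}_\rho\,\norm{\xi^N}_{\rho-\delta}$.

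There is no real obstacle here: the statement is the classical solvability-under-compatibility result for a two-step upper-triangular cohomological system, and the only delicate point is the bookkeeping of strip losses, namely that the first R\"ussmann step consumes $\delta$ to produce $\xi^N$ on $\TT^n_{\rho-\delta}\times\TT^\ell_{\rho-\delta}$, and the second step consumes another $\delta$ to produce $\xi^L$ on $\TT^n_{\rho-2\delta}\times\TT^\ell_{\rho-2\delta}$, so that the estimates match the stated exponents of $\delta$ and the total domain shrinkage is $2\delta$.
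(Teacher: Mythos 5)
Your proposal follows exactly the same route as the paper's own proof: solve the lower cohomological equation first via Lemma \ref{lem:Russmann}, use the solvability (zero-average) requirement of the upper equation to pin down $\xi^N_{0,0}$ through the non-degeneracy of $\aver{T}$, then apply R\"ussmann again for $\xi^L$, and chain the Banach-algebra property with the small-divisor estimate for the bounds, with the $\delta$-loss bookkeeping as you describe. One wording caveat: rather than ``letting $\delta\to 0$'' on the average term, what is actually used is simply that $|\aver{u}|\leq\norm{u}_{\rho-\delta}$ for any strip on which $u$ is defined, so the $c_R/(\gamma\delta^\tau)$ factor stays; since the estimates you write match the lemma, the substance is correct and only the phrasing is misleading.
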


\begin{proof}
%This triangular structure is classic in KAM theory and appears in
%any Kolmogorov scheme (see e.g. \cite{BennettinGGS84,Llave01,Kolmogorov54}).
%The Lemma is directly adapted from Lemma 4.14 in \cite{HaroCFLM16} and the estimates
%are directly obtained using Lemma \ref{lem:Russmann}.
From (\ref{eq:lin:last1}), we have,
\begin{equation*}
\begin{split}
\Lie{\om, \al} \xi^{N} (\te, \vp) &= \eta^N(\te, \vp) \\
\Lie{\om, \al} \xi^{L} (\te, \vp) &= \eta^L(\te, \vp) - T(\te,\vp) \xi^{N}(\te, \vp) \,.
\end{split}
\end{equation*}
By hypothesis we know that  $\aver{\eta^{N} } = 0_{n}$, so 
\begin{equation}\label{eq:solve-xi-N}
\xi^{N}\left(\theta, \varphi \right) =\xi_{0,0}^{N} +  \R{\om, \al}\left( \eta^{N}(\theta, \varphi) \right) \,.
\end{equation}
To solve the second equation, the average  $\aver{ \eta^{L} - T \xi^{N}}$ must be zero. i.e.
\begin{equation*}
\begin{split}
0_n = \aver{\eta^{L} - T \xi^{N} } %&= \aver{  \eta^{L} - S \R{\omega, \alpha}\left( \eta^{N} \right) - S\xi_{0}^{N} }\\
%&= \aver{\eta^{L} - S \R{\omega, \alpha}\left( \eta^{N} \right) } - \aver{ S\xi_{0}^{N} }\\
&= \aver{\eta^{L} - T \R{\omega, \alpha}\left( \eta^{N} \right) } - \aver{T}\xi_{0,0}^{N} \,.
\end{split}
\end{equation*}
Then, 
\begin{equation}
 \xi_{0,0}^{N}= \aver{T}^{-1}\aver{ \eta^{L} - T \R{\omega, \alpha}\left( \eta^{N} \right) }\,.
 \label{eq:solve-xi-N_0}
\end{equation}
Back to the second equation, its solution is,
\begin{equation}
\xi^{L}\left(\theta, \varphi \right) =\xi_{0,0}^{L} + \R{\omega, \alpha}\left( \eta^{L}(\theta, \varphi) - T(\theta, \varphi) \xi^{N}(\theta, \varphi)\right)\,,
\label{eq:solve-xi-L}
\end{equation}
in addition, we have estimates
\begin{equation*}
\begin{split}
\norm{\xi^{N} }_{\rho - \delta} %&= \norm{ \xi_{0,0}^{N} +  \R{\omega, \alpha} \eta^{N} }_{\rho - \delta} \leq  \norm{ \xi_{0}^{N}  }_{\rho - \delta}   + \norm{  \R{\omega, \alpha} \eta^{N} }_{\rho - \delta} \\
%&\leq  |\xi_{0,0}^{N} |   + \norm{ \R{\om, \al} \eta^{N} }_{\rho - \de}\\ %= |\xi_{0}^{N} |   + \norm{ \xi^{N} }_{\rho - \de}\\
&\leq  | \xi_{0,0}^{N} |   + \dfrac{c_R}{\gamma \delta^{\tau}} \norm{\eta^{N} }_{\rho}  \,,
\end{split}
\end{equation*}
now, for $\xi_{0,0}^{N}$ 
\begin{equation*}
\begin{split}
\left\vert \xi_{0,0}^{N} \right\vert &\leq \left\vert \langle T \rangle^{-1} \right\vert  \left\vert \langle \eta^{L} - T \R{\omega, \alpha}\left( \eta^{N} \right) \rangle \right\vert \\
%&\leq  \left\vert \langle S \rangle^{-1}  \right\vert  \left\vert \langle \eta^{L} - S \R{\omega, \alpha}\left( \eta^{N} \right) \rangle \right\vert \\
%&\leq  \left\vert \langle S \rangle^{-1}  \right\vert  \left\vert \displaystyle \int_{\mathbb{T}^{d} \times \mathbb{T}^{\ell}} \eta^{L}(\theta, \varphi) - S(\theta, \varphi) \R{\omega, \alpha}\left( \eta^{N} (\theta, \varphi) \right) d\theta d\varphi \right\vert \\
%&\leq  \left\vert \langle T \rangle^{-1}  \right\vert  \displaystyle \int_{\mathbb{T}^{d} \times \mathbb{T}^{\ell}} \left\vert \eta^{L}(\theta, \varphi) - T(\theta, \varphi) \R{\omega, \alpha}\left( \eta^{N} (\theta, \varphi) \right) \right\vert d\theta d\varphi  \\
%&\leq  \left\vert \langle S \rangle^{-1}  \right\vert  \left( \Vert \eta^{L} - S \R{\omega, \alpha}\eta^{N}  \Vert_{\rho -\delta} \right)  \\
%&\leq  \left\vert \langle S \rangle^{-1}  \right\vert  \left( \Vert \eta^{L} \Vert_{\rho -\delta} + \Vert S \Vert_\rho  \Vert \R{\omega, \alpha}\eta^{N}  \Vert_{\rho -\delta} \right)\\
&\leq  \left\vert \langle T \rangle^{-1}  \right\vert  \left( \Vert \eta^{L} \Vert_{\rho} + \dfrac{c_{R}}{\gamma \delta^{\tau}} \Vert T \Vert_\rho  \Vert \eta^{N}  \Vert_{\rho} \right)\,.
\end{split}
\end{equation*}
Finally, 
\begin{equation*}
\begin{split}
\Vert \xi^{L} \Vert_{\rho - 2\delta} &= \Vert \xi_{0,0}^{L} + \R{\omega, \alpha}\left( \eta^{L} - T \xi^{N} \right) \Vert_{\rho - 2\delta} \\
&\leq  \vert \xi_{0,0}^{L}  \vert   + \dfrac{c_{R}}{\gamma \delta^{\tau}} \Vert \eta^{L} - T \xi^{N}  \Vert_{\rho - \delta} \\
&\leq  \vert \xi_{0,0}^{L}  \vert   + \dfrac{c_{R}}{\gamma \delta^{\tau}} \left( \Vert \eta^{L} \Vert_{\rho - \delta} + \Vert T \Vert_\rho  \Vert \xi^{N}  \Vert_{\rho - \delta} \right)\,,
\end{split}
\end{equation*}
and this concludes the proof.
\end{proof}

To approximate the solutions of \eqref{eq:lin:1Enew} we invoke Lemma \ref{lem:upperT} taking
\begin{equation}\label{eq:eta:corr}
\begin{split}
\eta^L(\te,\vp) &= - N(\te,\vp)^\top \Omega(K(\te,\vp)) E(\te,\vp)\,, \\
\eta^N(\te,\vp) &= L(\te,\vp)^\top \Omega(K(\te,\vp)) E(\te,\vp)\,,
\end{split}
\end{equation}
and $T(\te,\vp)$ given by \eqref{eq:newT}. 
%We recall from Lemma \ref{lem:Lang}
%that the compatibility condition $\aver{\eta^N}=0_n$ is satisfied.
Note that $\aver{\xi^N}=\xi^N_{0,0}$ and
we have the freedom of choosing any
value for $\aver{\xi^L}= \xi^L_{0,0} \in \RR^n$.
For convenience in setting the phase, 
we choose $\xi_{0,0}^L=0_n$, 
even though
other choices are possible.

From Lemma \ref{lem:upperT} we read that 
$\norm{\xi}_{\rho-2\delta}=\cO(\norm{E}_\rho)$ and,
using the geometric properties characterized in Section \ref{sec:lemmas},
we have $\norm{\Ered}_{\rho-2\delta}=\cO(\norm{E}_\rho)$ and $\norm{\Esym}_{\rho-2\delta}=\cO(\norm{E}_\rho)$.
From these estimates, we conclude that the solution of equation
\eqref{eq:lin:1Enew}
is approximated by the solution of
the cohomological equation
\begin{equation}\label{eq:mycohoxi}
\Lambda(\te,\vp) \xi(\te,\vp) + \Lie{\om,\al} \xi(\te,\vp)=\eta(\te,\vp)\,.
\end{equation}
This, together with other estimates, will be suitably quantified in the next section.

\subsection{One step of the iterative procedure}\label{ssec:iter:lemmas}

In this section we apply one correction of the quasi-Newton method
described in Section \ref{ssec:qNewton} and we obtain sharp quantitative
estimates
for the new approximately invariant torus and related objects. We
set sufficient conditions to preserve the control of the previous
estimates.

\begin{lemma} [The Iterative Lemma] \label{lem:KAM:inter:integral}
With the hypotheses of Theorem \ref{theo:KAM}, 
Then, there exist constants
$\CDeltaK$, 
$\CDeltaB$, 
$\CDeltaTOI$ and
$\CE$
such that 
if the inequalities
\begin{equation}\label{eq:cond1:K:iter}
\frac{\hCDelta \norm{E}_\rho}{\gamma^2 \delta^{2\tau+1}} < 1\,,
\qquad\qquad
\frac{\CE \norm{E}_\rho}{\gamma^4 \delta^{4\tau}} < 1\,,
\end{equation}
hold for some $0<\delta< \rho$, where
\begin{equation}\label{eq:mathfrak1}
\begin{split}
\hCDelta := \max \left\{
%{\color{magenta} \frac{\gamma^2 \delta^{2\tau}}{\nu}} \, , \, 
\Csym \gamma \delta^{\tau} \, , \,      
\frac{ n \CDeltaK }{\sigmaDK - \norm{\Dif K}_\rho} \, , \,      
\frac{ 2n  \CDeltaK }{\sigmaDKT - \norm{(\Dif K)^\top}_\rho} \, , \, \right. \\
\left.
    \frac{\CDeltaB}{\sigmaB - \norm{B}_\rho} \, , \,      
\frac{\CDeltaTOI }{\sigmaT - \abs{\aver{T}^{-1}}} \, , \,      
\frac{\CDeltaK \delta}{\dist (K(\TT^{n+\ell}_\rho),\partial (\B\times \TT^\ell_\rho ))} \,, \right\} \,.
\end{split}
\end{equation}
%\begin{equation}\label{eq:mathfrak1}
%\begin{split}
%    \hCDelta := \max \bigg\{ %& {\color{red} \frac{\gamma^2 \delta^{2\tau}}{\nu}}
%%\, , \, 
%    \Csym \gamma \delta^{\tau} 
%\, , \,
%\frac{ n \CDeltaK }{\sigmaDK - \norm{\Dif K}_\rho} 
%\, , \,
%\frac{ 2n  \CDeltaK }{\sigmaDKT - \norm{(\Dif K)^\top}_\rho}
%\, , \,
%\\
%&  \frac{\CDeltaB}{\sigmaB - \norm{B}_\rho} 
%\, , \,
% \frac{\CDeltaTOI }{\sigmaT - \abs{\aver{T}^{-1}}} 
%\, , \,
% \frac{\CDeltaK \delta}{\dist (K(\TT^{n+\ell}_\rho),\partial (\B\times \TT^\ell_\rho ))}
% \bigg\} \,,
%\end{split}
%\end{equation}
then we have an approximate torus of the same frequency $\omega$
given by $\bar K=K+\DeltaK$, with components in $\Anal(\TT^{n}_{\rho-2\delta} \times  \TT^{\ell}_{\rho-2\delta} )$, that defines new objects $\bar B$
and $\bar T$ (obtained by replacing $K$ by $\bar K$) satisfying
\begin{align}
& \norm{\Dif \bar K}_{\rho-3\delta} < \sigmaDK \,, \label{eq:DK:iter1} \\
& \norm{(\Dif \bar K)^\top}_{\rho-3\delta} < \sigmaDKT \,, \label{eq:DKT:iter1} \\
& \norm{\bar B}_{\rho-3\delta} < \sigmaB \,, \label{eq:B:iter1} \\
& \abs{\aver{\bar T}^{-1}} < \sigmaT \,, \label{eq:T:iter1} \\
& \dist(\bar K(\TT^{n}_{\rho-2\delta} \times  K(\TT^{\ell}_{\rho-2\delta} ),\partial  (\B\times \TT^\ell_\rho )) >0 \,, \label{eq:distB:iter1} 
\end{align}
and
\begin{align}
& \norm{\bar K-K}_{\rho-2 \delta} < \frac{\CDeltaK}{\gamma^2 \delta^{2\tau}}
\norm{E}_\rho\,, \label{eq:est:DeltaK} \\
& \norm{\bar B-B}_{\rho-3\delta} < \frac{\CDeltaB}{\gamma^2 \delta^{2 \tau+1}}
\norm{E}_\rho\,, \label{eq:est:DeltaB} \\
& \abs{\aver{\bar T}^{-1}-\aver{T}^{-1} } < \frac{\CDeltaTOI}{\gamma^2 \delta^{2 \tau+1}}
\norm{E}_\rho\,, \label{eq:est:DeltaT}
\end{align}
The new error of invariance is given by
\[
\bar E(\te,\vp) =  Z_\H (\bar K(\te,\vp),\vp) + \Lie{\omega, \alpha} \bar K(\theta,\varphi) \,,
\]
and satisfies
\begin{equation}\label{eq:E:iter1}
\norm{\bar E}_{\rho-2\delta} < \frac{\CE}{\gamma^4
\delta^{4\tau}}\norm{E}_\rho^2\,.
\end{equation}
The above constants are collected in Table~\ref{tab:constants:all:2}.
\end{lemma}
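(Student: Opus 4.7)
The plan is to execute one step of the quasi-Newton scheme described in Section~\ref{ssec:qNewton} and to track all the quantitative consequences carefully. The building blocks are already available: the solvability of the triangular cohomological system (Lemma~\ref{lem:upperT}), R\"ussmann's estimates (Lemma~\ref{lem:Russmann}), and the explicit control of the adapted symplectic frame, the approximate reducibility, and the torsion obtained in Lemmas~\ref{lem:Lang}--\ref{lem:reduc}.

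First, I would define $\DeltaK(\te,\vp)=P(\te,\vp)\,\xi(\te,\vp)$, where $\xi$ is the solution of the triangular system~\eqref{eq:lin:last1} with right-hand sides $(\eta^L,\eta^N)$ given by~\eqref{eq:eta:corr} and gauge choice $\xi^L_{0,0}=0_n$. The compatibility condition $\aver{\eta^N}=0_n$ is granted by the third identity in~\eqref{eq:propL}, and hypothesis $H_4$ makes the linear solve for $\xi^N_{0,0}$ well defined. Inserting the bounds on $L,N,B,T$ from Lemmas~\ref{lem:Lang}--\ref{lem:Newtwist} into the estimates of Lemma~\ref{lem:upperT} yields
\[
\norm{\xi}_{\rho-2\delta}\leq \frac{C\,\norm{E}_\rho}{\gamma^2\delta^{2\tau}},\qquad \norm{\DeltaK}_{\rho-2\delta}\leq \frac{\CDeltaK\,\norm{E}_\rho}{\gamma^2\delta^{2\tau}},
\]
which is~\eqref{eq:est:DeltaK}; Cauchy estimates on a strip of width $\delta$ then control $\Dif_\te\DeltaK$ and $(\Dif_\te\DeltaK)^\top$, losing a factor $\delta^{-1}$.

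Next, I would transfer the inductive hypotheses from $K$ to $\bar K=K+\DeltaK$. The bounds~\eqref{eq:DK:iter1}--\eqref{eq:DKT:iter1} come from the triangle inequality combined with the first smallness condition of~\eqref{eq:cond1:K:iter}: the definition of $\hCDelta$ in~\eqref{eq:mathfrak1} is tailored precisely so that $\norm{\Dif_\te\DeltaK}$ is strictly smaller than the slack $\sigmaDK-\norm{\Dif_\te K}_\rho$, and analogously for the transpose, for the inverse $B$, and for the distance to the boundary~\eqref{eq:distB:iter1}. For $\bar B=(\bar L^{\top}(G\circ \bar K)\bar L)^{-1}$ I would expand around $B$ by means of the matrix inversion lemma of the appendix, which gives~\eqref{eq:est:DeltaB} and thereby~\eqref{eq:B:iter1}. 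The updated torsion $\bar T$ is handled by inserting the differences $\bar K-K$, $\bar L-L$, $\bar N-N$ into expression~\eqref{eq:newT} and controlling each difference by $\norm{\DeltaK}$ or $\norm{\Dif_\te\DeltaK}$; a final Neumann expansion of $\aver{\bar T}^{-1}$ around $\aver{T}^{-1}$ delivers~\eqref{eq:est:DeltaT} and then~\eqref{eq:T:iter1}.

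The core and most delicate step is the quadratic bound~\eqref{eq:E:iter1}. I would Taylor expand
\[
\bar E = E + \bigl(\Dif_z Z_\H(K,\vp)\,\DeltaK + \Lie{\om,\al}\DeltaK\bigr) + R_\H(K,\DeltaK),
\]
with remainder controlled by $\tfrac{1}{2}\cteDDXH\,\norm{\DeltaK}^2$. Substituting $\DeltaK=P\xi$ and contracting with $-\Omega_0 P^{\top}\Omega(K)$, Lemmas~\ref{lem:sympl} and~\ref{lem:reduc} turn the linear bracket into $(\Lambda+\Ered)\xi + (I_{2n}-\Omega_0\Esym)\Lie{\om,\al}\xi-\eta$; since $\xi$ solves $\Lambda\xi+\Lie{\om,\al}\xi=\eta$ by construction (Lemma~\ref{lem:upperT}), the leading pieces cancel and only $\Ered\,\xi-\Omega_0\Esym\Lie{\om,\al}\xi$ survives. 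Using $\norm{\Esym},\norm{\Ered}=\cO\!\bigl(\gamma^{-1}\delta^{-\tau-1}\norm{E}_\rho\bigr)$ together with $\norm{\xi},\norm{\Lie{\om,\al}\xi}=\cO\!\bigl(\gamma^{-1}\delta^{-\tau}\norm{E}_\rho\bigr)$, every surviving linear-algebraic term is of order $\norm{E}_\rho^2/(\gamma^2\delta^{2\tau+1})$, while the Taylor remainder $R_\H$ contributes the worst factor $\norm{E}_\rho^2/(\gamma^4\delta^{4\tau})$, which is what dictates the smallness condition $\CE\norm{E}_\rho/(\gamma^4\delta^{4\tau})<1$. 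The main obstacle is precisely this cancellation: one must rearrange the linear part of $\bar E$ so that all contributions of order $\norm{E}_\rho$ drop out, leaving only genuinely quadratic pieces, and then collect every prefactor into the explicit constants tabulated in Table~\ref{tab:constants:all:2}.
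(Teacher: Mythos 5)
Your proposal reproduces the paper's own seven-step argument: substitute $\DeltaK=P\xi$ with $\xi$ solving the upper-triangular cohomological system of Lemma~\ref{lem:upperT} (using $\aver{\eta^N}=0_n$ from Lemma~\ref{lem:Lang} and $\det\aver{T}\neq 0$ from $H_4$), cancel the linear part against $\Lambda\xi+\Lie{\omega,\alpha}\xi=\eta$ after pre-multiplying by $-\Omega_0P^\top\Omega(K)$, isolate $\Elin=\Ered\xi-\Omega_0\Esym\Lie{\omega,\alpha}\xi$ plus a Taylor remainder, propagate the bounds from $K$ to $\bar K$ by Cauchy estimates, and control $\bar B$ and $\aver{\bar T}^{-1}$ by the matrix-inverse perturbation lemma (the paper cites Lemma~A.1 of \cite{figueras2024modified}, not an appendix of this paper). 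This is the same route with the same key lemmas.

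One imprecision worth flagging: you claim $\norm{\xi}=\cO(\gamma^{-1}\delta^{-\tau}\norm{E}_\rho)$ and conclude that the surviving algebraic terms are of order $\norm{E}_\rho^2/(\gamma^2\delta^{2\tau+1})$. In fact the $\xi^L$ component (and hence $\xi$ itself) is $\cO(\gamma^{-2}\delta^{-2\tau}\norm{E}_\rho)$, so the $\Ered\xi$ contribution is of order $\gamma^{-3}\delta^{-(3\tau+1)}\norm{E}_\rho^2$, worse than what you state. This does not damage the final conclusion because the Taylor remainder $\tfrac12\cteDDXH\norm{\DeltaK}^2=\cO(\gamma^{-4}\delta^{-4\tau}\norm{E}_\rho^2)$ still dominates and fixes the overall prefactor, but it does change the explicit constant $\CE$; a careful tracking of exponents of $\gamma$ and $\delta$ in $\Elin$ is needed to match Table~\ref{tab:constants:all:2}. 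You should also make explicit the Neumann-series step that inverts $I_{2n}-\Omega_0\Esym$, which is where the first smallness condition in~\eqref{eq:cond1:K:iter} (the $\Csym\gamma\delta^\tau$ term of $\hCDelta$) is actually consumed.
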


\begin{proof} 
%This result requires rather cumbersome computations, so we divide the proof
%into several steps.
To obtain the estimates of the theorem requires many computations, 
so we break down the demonstration into seven steps.
\bigskip

%%%%%%%%%%%%%%%%%%%%%%%%%%%%%%%%%%%%%%%%%%%%%%%%%%%%%%%%%%
% Step 1: Control of the new parameterization
%%%%%%%%%%%%%%%%%%%%%%%%%%%%%%%%%%%%%%%%%%%%%%%%%%%%%%%%%%

\paragraph
{\emph{Step 1:
Control of the new parameterization}.}
%We start by considering the new parameterization $\bar K (\te,\vp)= K(\te,\vp) + \DeltaK(\te,\vp)$ obtained 
%from the 
%system \eqref{eq:lin:last1}, 
%with $\eta(\te,\vp)$ given by \eqref{eq:eta:corr}. We choose the solution that satisfies $\xi_0^L=0$.
%Using the estimates obtained in Section \ref{sec:lemmas} we have
%\[
%\norm{\eta^L}_\rho \leq \CNT \cteOmega \norm{E}_\rho\,,
%\qquad
%\norm{\eta^N}_\rho \leq \CLT \cteOmega \norm{E}_\rho\,.
%\]
%In order to invoke Lemma \ref{lem:twist}
%(we must fulfill condition \eqref{eq:fake:cond})
%we have included the inequality
We start by considering a new parameterization 
$\bar K (\theta,\varphi)=K(\theta,\varphi) + \Delta K(\theta,\varphi)$, 
which is obtained from the system \eqref{eq:lin:last1}, 
with $\eta(\theta,\varphi)$ given by \eqref{eq:eta:corr}. 
We choose the solution that satisfies $\xi_{0,0}^L=0$. 
Using the estimates obtained in Section \ref{sec:lemmas}, 
we have the following inequalities:
\[
\norm{\eta^L}_\rho \leq \CNT \cteOmega \norm{E}_\rho\,\text{, and}
\qquad
\norm{\eta^N}_\rho \leq \CLT \cteOmega \norm{E}_\rho\,.
\]
Combining 
Lemma \ref{lem:twist}
and 
Lemma \ref{lem:upperT},
we obtain estimates for the solution
of the cohomological equations
\begin{align}
& \abs{\xi^N_{0,0}} \leq \abs{\aver{T}^{-1}}
\Big(
\norm{\eta^L}_{\rho} + \frac{c_R}{\gamma \delta^\tau} \norm{T}_\rho \norm{\eta^N}_\rho
\Big) \nonumber \\
& \qquad \leq \sigmaT \Big(
\CNT \cteOmega + \frac{c_R}{\gamma \delta^\tau} \CT \CLT \cteOmega
\Big)
\norm{E}_\rho =: \frac{\CxiNO}{\gamma \delta^\tau} \norm{E}_\rho\,, \label{eq:CxiNO} \\
& \norm{\xi^N}_{\rho-\delta} \leq \abs{\xi^N_0}+ \frac{c_R}{\gamma \delta^\tau} \norm{\eta^N}_\rho \nonumber \\
& \qquad \leq 
\frac{\CxiNO}{\gamma \delta^\tau} \norm{E}_\rho 
+ \frac{c_R}{\gamma \delta^\tau}\CLT \cteOmega \norm{E}_\rho
 =: 
\frac{\CxiN}{\gamma \delta^\tau} \norm{E}_\rho \,, \label{eq:CxiN} \\
& \norm{\xi^L}_{\rho-2\delta} \leq \abs{\xi_0^L} + \frac{c_R}{\gamma \delta^\tau}
\Big(
\norm{\eta^L}_{\rho} + \norm{T}_\rho \norm{\xi^N}_{\rho-\delta}
\Big) \nonumber \\
& \qquad \leq \frac{c_R}{\gamma \delta^\tau} \Big(
\CNT \cteOmega + \CT \frac{\CxiN}{\gamma \delta^\tau}
\Big)
\norm{E}_\rho =: \frac{\CxiL}{\gamma^2 \delta^{2\tau}} \norm{E}_\rho\,. \label{eq:CxiL} 
\end{align}
The magnitude of the vector $\xi(\theta, \varphi)$, 
which satisfies \eqref{eq:mycohoxi}, 
is estimated as
%The norm of the full vector $\xi(\te, \vp)$, which satisfies \eqref{eq:mycohoxi}, is controlled as
\begin{align}
\norm{\xi}_{\rho-2\delta} \leq {} &
\max\{ \norm{\xi^L}_{\rho-2\delta}\, , \, \norm{\xi^N}_{\rho-\delta}\} \nonumber \\
\leq {} &
\frac{\max\{\CxiL \, , \, \CxiN  \gamma \delta^\tau\}}{\gamma^2 \delta^{2\tau}} \norm{E}_\rho
=: \frac{\Cxi}{\gamma^2 \delta^{2\tau}} \norm{E}_\rho\,. \label{eq:Cxi}
\end{align}
The new parameterization $\bar K(\theta,\varphi)$ and associated objects are estimated through conventional calculations. 
The control \eqref{eq:est:DeltaK} stems directly from
\[
\bar K(\te, \vp)-K(\te, \vp) = \DeltaK(\te, \vp) = P(\te, \vp) \xi(\te, \vp) = L(\te, \vp) \xi^L(\te, \vp) + N (\te, \vp) \xi^N(\te, \vp)\,.
\]

In other words, 
by employing approximations provided in \eqref{eq:CxiN} and \eqref{eq:CxiL},
we obtain
\begin{equation}\label{eq:CDeltaK}
\norm{\bar K - K}_{\rho-2\delta} 
\leq \frac{\CL \CxiL + \CN \CxiN \gamma \delta^\tau}{\gamma^2 \delta^{2\tau}} \norm{E}_\rho =:
\frac{\CDeltaK}{\gamma^2 \delta^{2\tau}} \norm{E}_\rho\,.
\end{equation}

To complete this step, 
we need to make sure that the value of 
$\bar K(\te,\vp)$ 
falls within the defined domain $\B\times \TT^\ell_\rho$ of all the entities involved. 
This verification is essential as it enables us to calculate the new error $\bar E(\te,\vp)$ before proceeding with the remaining geometrical elements. 
To achieve this, 
we take note of the fact that:
\begin{align}
\dist(\bar K(\TT^{n+\ell}_{\rho-2\delta}),\partial (\B\times \TT^\ell_{\rho-2\delta})) \geq {} &  \dist(K(\TT^{n+\ell}_\rho ),\partial( \B\times \TT^\ell_\rho)) - \norm{\DeltaK}_{\rho-2\delta} \nonumber \\
\geq {} &  \dist(K(\TT^{n +\ell}_\rho ),\partial (\B\times \TT^\ell_\rho)) - \frac{\CDeltaK}{\gamma^2 \delta^{2\tau}} \norm{E}_\rho>0\,. \label{eq:ingredient:iter:2}
\end{align}
The last inequality follows from Hypothesis \eqref{eq:cond1:K:iter}, 
using the sixth term in \eqref{eq:mathfrak1}. 
Thus, 
we have established the control in \eqref{eq:distB:iter1}.
\bigskip

%%%%%%%%%%%%%%%%%%%%%%%%%%%%%%%%%%%%%%%%%%%%%%%%%%%%%%%%%%
% Step 2: Control of the new error of invariance
%%%%%%%%%%%%%%%%%%%%%%%%%%%%%%%%%%%%%%%%%%%%%%%%%%%%%%%%%%

\paragraph
{\emph{Step 2: Control of the new error of invariance}.}

To guarantee that the corrected parameterization $\bar K$ is invariant, 
we control the quadratic terms neglected in equation  \eqref{eq:mycohoxi}, 
by first considering the error in solving the linearized equation \eqref{eq:lin:1Enew},
\begin{equation}\label{eq:Elin}
\Elin (\te, \vp) = \Ered(\te, \vp) \xi(\te, \vp) - \Omega_0 \Esym(\te, \vp) \Lie{\om,\al}\xi(\te, \vp)\,.
\end{equation}
We control the term $\Lie{\om,\al} \xi(\te,\vp)$ by leveraging the fact that 
$\xi(\te,\vp)$ satisfies the solution of the cohomological equation \eqref{eq:mycohoxi}:
\begin{align}
\norm{\Lie{\omega} \xi^N}_{\rho}
= {} & 
\norm{\eta^N}_{\rho}
\leq \CLT \cteOmega \norm{E}_\rho =: \CLiexiN \norm{E}_\rho \,,
\label{eq:CLiexiN}
\\
\norm{\Lie{\omega} \xi^L}_{\rho-\delta}
={} &
\norm{\eta^L - T \xi^N}_{\rho-\delta}
\leq \left(\CNT \cteOmega + \CT \frac{\CxiN}{\gamma \delta^\tau}\right)\norm{E}_\rho \nonumber \\
=: {} & \frac{\CLiexiL}{\gamma \delta^{\tau}} \norm{E}_\rho \,,
 \label{eq:CLiexiL} \\
\norm{\Lie{\omega} \xi}_{\rho-\delta} 
    \leq {} & \max\left\{ \frac{\CLiexiL}{\gamma \delta^{\tau}}, \CLiexiN \right\} \norm{E}_\rho =: \frac{\CLiexi}{\gamma \delta^{\tau}} \norm{E}_\rho\,.
 \label{eq:CLiexi}
\end{align}

Therefore, the control for $\Elin(\te, \vp)$ is given by 
\begin{align}
\norm{\Elin}_{\rho-2\delta} \leq {} & 
\norm{\Ered}_{\rho-2\delta} \norm{\xi}_{\rho-2\delta} +
\cteOmega \norm{\Esym}_{\rho-2\delta} \norm{\Lie{\omega}\xi}_{\rho-2 \delta} \nonumber \\
\leq {} &
\frac{\Cred}{\gamma \delta^{\tau+1}}
\frac{\Cxi}{\gamma^2 \delta^{2 \tau}}
\norm{E}_\rho^2
+
\cteOmega
\frac{\Csym}{\gamma \delta^{\tau+1}}
\frac{\CLiexi}{\gamma \delta^{\tau}}
\norm{E}_\rho^2
=: \frac{\Clin}{\gamma^3 \delta^{3\tau+1}} \norm{E}_\rho^2\,.
\label{eq:Clin}
\end{align}
After making the correction, 
the invariance error associated with the new parameterization can be determined.
\begin{equation}\label{eq:new:E:comp}
\begin{split}
\bar E(\te,\vp) = {} & Z_\H(K(\te,\vp)+\DeltaK(\te,\vp),\vp) + \Lie{\om,\al}K(\te,\vp) + \Lie{\om, \al}\DeltaK(\te,\vp) \\
%= {} & Z_\H(K(\te,\vp),\vp)+\Dif Z_\H(K(\te,\vp),\vp) \DeltaK(\te,\vp) + \Lie{\om,\al}K(\te,\vp) + \Lie{\om,\al}\DeltaK(\te,\vp)
%+\Delta^2 Z(\te,\vp) \\
%= {} & \Dif Z_\H (K(\te,\vp),\vp) \DeltaK(\te,\vp) + \Lie{\om,\al}\DeltaK(\te,\vp) + E(\te,\vp)
%+\Delta^2 Z(\te,\vp) \\
= {} & \Big( \Dif Z_\H (K(\te,\vp),\vp) P(\te,\vp) + \Lie{\om,\al}P(\te,\vp) \Big) \xi(\te,\vp)
+P(\te,\vp) \Lie{\om,\al} \xi(\te,\vp) \\
{} & \quad+  E(\te,\vp) + \Delta^2Z_\H( K(\te,\vp),\vp) \\
%= {} & (-\Omega_0 P(\te,\vp)^\top \Omega(K(\te,\vp)))^{-1} \Elin(\te,\vp) + \Delta^2Z(\te,\vp) \\
= {} & P(\te,\vp) (I_{2n}-\Omega_0 \Esym(\te,\vp))^{-1} \Elin(\te,\vp)+\Delta_\theta^2 Z_\H(K(\te,\vp) ,\vp)\,,
\end{split}
\end{equation}
where
\begin{equation}\label{eq:Delta2X}
\begin{split}
\Delta^2_\theta Z_\H( K(\te,\vp),\vp) = {} & Z_\H(K(\te,\vp)+\DeltaK(\te,\vp),\vp)-Z_\H(K(\te,\vp),\vp)-\Dif_z Z_\H(K(\te,\vp),\vp) \DeltaK(\te,\vp) \\
= {} & \int_0^1 (1-s) \Dif^2_z Z_\H(K(\te,\vp)+t \DeltaK(\te,\vp),\vp) [\DeltaK(\te,\vp),\DeltaK(\te,\vp)] \dif s\,,
\end{split}
\end{equation}
and we used equation \eqref{eq:Esym} and \eqref{eq:Elin}. We use the mean value theorem to control $\Delta_\theta^2 Z(\te,\vp)$, i.e.,
\begin{equation}
\begin{split}
\norm{ \Delta_{\theta}^{2}Z_{h}(\theta, \varphi)}_{\rho -2\delta} %&= \left\Vert \displaystyle \int _{0}^{1} (1- s)D_{z}^{2}Z_{h}\left( K(\theta, \varphi) + t\Delta K(\theta, \varphi), \varphi \right)\left[\Delta K(\theta, \varphi), \Delta K(\theta, \varphi) \right] ds \right\Vert_{\rho -2\delta} \\
%&\leq \left\vert \displaystyle \int _{0}^{1} (1- s) ds \right\vert  \norm{ D_{z}^{2}Z_{h} }_{\B\times \TT^\ell} \norm{\Delta K }_{\rho -2\delta} \norm{\Delta K }_{\rho -2\delta} \\
&\leq \dfrac{1}{2}  c_{Z_{h}, 2} \dfrac{(C_{\Delta K})^{2}}{\gamma^{4} \delta^{4\tau}} \norm{E}_{\rho}^{2} \,.
\label{eqn: Delta2}
\end{split}
\end{equation}

It is noteworthy that the error function equation \eqref{eq:new:E:comp}, 
is well-defined as a result of the calculations in equation \eqref{eq:ingredient:iter:2}. 
We estimate the norm of the error function as follows
\[
\norm{\bar E}_{\rho-2\delta} \leq \norm{P}_{\rho-2\delta} \norm{(I-\Omega_0 \Esym)^{-1}}_{\rho-2\delta} \norm{\Elin}_{\rho-2\delta}+\norm{\Delta^2_\theta Z_\H}_{\rho-2\delta}\,.
\]
Then, using a Neumann series argument, we obtain
\[
    \norm{(I-\Omega_0 \Esym)^{-1}}_{\rho-2\delta} \leq \frac{1}{1-\norm{\Omega_0 \Esym}_{\rho-2\delta}} < 1 \,,
\]
where we used the inequality
\[
    \frac{\Csym}{\gamma \delta^{\tau+1}}\norm{E}_\rho < 1\,,
\]
that corresponds to the first term in \eqref{eq:mathfrak1}
(Hypothesis \eqref{eq:cond1:K:iter}).
Putting together the above estimates, we obtain
\begin{equation}\label{eq:CE}
\norm{\bar E}_{\rho-2\delta} \leq \left(\frac{ 2(\CL+\CN)\Clin}{\gamma^3 \delta^{3\tau+1}} + \frac{1}{2} \cteDDXH \frac{(\CDeltaK)^2}{\gamma^4 \delta^{4\tau}} \right)
\norm{E}_\rho^2 =:
\frac{\CE}{\gamma^4 \delta^{4\tau}} \norm{E}_\rho^2\,.
\end{equation}
We have obtained estimate \eqref{eq:E:iter1}.
%%Notice that the second inequality in \eqref{eq:cond1:K:iter} 
%%and \eqref{eq:ingredient:iter:1}
%%imply that
%%\begin{equation}\label{eq:barEvsE}
%%\norm{\bar E}_{\rho-2\delta} < \norm{E}_\rho< \delta \cauxT\,.
%%\end{equation}

\bigskip

%%%%%%%%%%%%%%%%%%%%%%%%%%%%%%%%%%%%%%%%%%%%%%%%%%%%%%%%%%
% Step 3: Control of the new frame L(theta)
%%%%%%%%%%%%%%%%%%%%%%%%%%%%%%%%%%%%%%%%%%%%%%%%%%%%%%%%%%

\paragraph
{\emph{Step 3: Control of the new frame $L(\te,\vp)$}.}
Equation \eqref{eq:CDeltaK} can be combined with Cauchy estimates to derive the constraint presented in equation \eqref{eq:DK:iter1},
%Combining \eqref{eq:CDeltaK} with Cauchy estimates, we obtain the control \eqref{eq:DK:iter1}:
\begin{equation} \label{eq:ingredient:iter:4}
\norm{\Dif_\theta \bar K}_{\rho-3\delta} \leq 
\norm{\Dif_\theta K}_{\rho} +
\norm{\Dif_\theta \DeltaK}_{\rho-3\delta} \leq 
\norm{\Dif_\theta K}_{\rho} +
\frac{n \CDeltaK}{\gamma^2 \delta^{2\tau+1}} \norm{E}_\rho < \sigmaDK \,,
\end{equation}
where the last inequality follows from
Hypothesis \eqref{eq:cond1:K:iter} (this corresponds to the second term in \eqref{eq:mathfrak1}).
Estimate \eqref{eq:DKT:iter1} is analogous
\begin{equation} \label{eq:ingredient:iter:5}
\norm{(\Dif_\theta \bar K)^\top}_{\rho-3\delta} \leq 
\norm{(\Dif_\theta K)^\top}_{\rho} +
\frac{2n \CDeltaK}{\gamma^2 \delta^{2\tau+1}} \norm{E}_\rho < \sigmaDKT \,.
\end{equation}
After obtaining the estimates \eqref{eq:ingredient:iter:4} and \eqref{eq:ingredient:iter:5},
it is clear that
\[
\norm{\bar L}_{\rho-3\delta} \leq \CL\,,
\qquad
\norm{\bar L^\top}_{\rho-3\delta} \leq \CLT\,.
\]
We can control the norm of the corresponding corrections using Cauchy estimates, 
the mean value theorem, 
and the estimate \eqref{eq:CDeltaK}:
\begin{align}
\norm{\bar L-L}_{\rho-3 \delta} \leq {} & 
\frac{n \CDeltaK}{\gamma^2 \delta^{2\tau+1}} \norm{E}_\rho  =: \frac{\CDeltaL}{\gamma^2 \delta^{2\tau+1}} \norm{E}_\rho\,, \label{eq:CDeltaL} \\
\norm{\bar L^\top-L^\top}_{\rho-3 \delta} \leq {} & 
\frac{ 2n \CDeltaK }{\gamma^2 \delta^{2\tau+1}} \norm{E}_\rho 
=: \frac{\CDeltaLT}{\gamma^2 \delta^{2\tau+1}} \norm{E}_\rho\,. \label{eq:CDeltaLT}
\end{align}

\bigskip

%%%%%%%%%%%%%%%%%%%%%%%%%%%%%%%%%%%%%%%%%%%%%%%%%%%%%%%%%%
% Step 4: Control of the new transversality condition
%%%%%%%%%%%%%%%%%%%%%%%%%%%%%%%%%%%%%%%%%%%%%%%%%%%%%%%%%%

\paragraph
{\emph{Step 4: Control of the new transversality condition}.}
To control $\bar B$, 
we take%use Lemma \ref{lem:aux} taking

\begin{align*}
G_L(\te, \vp) = L(\te, \vp)^\top G(K(\te, \vp)) L(\te, \vp) \,,\\
G_{\bar L}(\te, \vp) = \bar L(\te, \vp)^\top G(\bar K(\te, \vp)) \bar L(\te, \vp) \,,
\end{align*}
where we have used
the notation introduced in \eqref{eq:def:GL}.
First, we compute
\begin{align}
\norm{G \circ \bar K-G \circ K}_{\rho-2\delta} \leq {} & \norm{\Dif G}_{\B} \norm{\bar K - K}_{\rho-2\delta}
\leq \frac{\cteDG \CDeltaK}{\gamma^2 \delta^{2\tau}} \norm{E}_\rho \nonumber \\
=: {} & \frac{\CDeltaG}{\gamma^2 \delta^{2\tau}} \norm{E}_\rho\,, \label{eq:CDeltaG}
\end{align}
and
\begin{align}
\norm{G_{\bar L}-G_L}_{\rho-3\delta} 
\leq {} & \norm{\bar L^\top (G \circ \bar K) \bar L - \bar L^\top (G \circ \bar K) L}_{\rho-3\delta}  \nonumber\\
        & +\norm{\bar L^\top (G \circ \bar K) L - \bar L^\top (G \circ K) L}_{\rho-3\delta} \nonumber \\
        &+\norm{\bar L^\top (G \circ K) L - L^\top (G \circ K) L}_{\rho-3\delta} \nonumber \\
\leq {} & \norm{\bar L^\top}_\rho \norm{G}_\B \norm{\bar L- L}_{\rho-3\delta}  \nonumber\\
        & + \norm{\bar L^\top}_\rho \norm{G \circ \bar K-G \circ K}_{\rho-3\delta} \norm{L}_\rho \nonumber \\
        &+\norm{\bar L^\top - L^\top}_{\rho-3\delta}\norm{G}_\B \norm{L}_\rho \nonumber \\ 
\leq {} & \frac{\CLT \cteG \CDeltaL + \CLT \CDeltaG \CL \delta + \CDeltaLT \cteG \CL }{\gamma^2 \delta^{2\tau+1}} \norm{E}_\rho \nonumber \\
  =: {} & \frac{\CDeltaGL}{\gamma^2 \delta^{2\tau+1}} \norm{E}_\rho \,. \label{eq:CDeltaGL} 
\end{align}
Afterward, we introduce the constant
%Then, we introduce the constant
\[
\CDeltaB := (\sigmaB)^2 \CDeltaGL\,,
\]
and review and verify the condition (A.1)  mentioned in Lemma (A.1), as stated in \cite{figueras2024modified} %%Pedro Quitar
%% and review and verify the condition \eqref{eq:lem:aux}  mentioned in Lemma  \ref{lem:aux} %%Pedro Quitar
%and check the condition \eqref{eq:lem:aux} in Lemma \ref{lem:aux}:
\begin{align}
\frac{(\sigmaB)^2 \norm{G_{\bar L}-G_L}_{\rho-3\delta}}{\sigmaB-\norm{B}_\rho} %\leq {} &
%\frac{2 (\sigmaB)^2 \CDeltaGL}{\sigmaB-\norm{B}_\rho} \frac{\norm{E}_\rho}{\gamma^2 \delta^{2\tau+1}} 
%\nonumber \\
%= {} & 
\leq \frac{\CDeltaB}{\sigmaB-\norm{B}_\rho} \frac{\norm{E}_\rho}{\gamma^2 \delta^{2\tau+1}}
< 1\,.
\label{eq:ingredient:iter:6}
\end{align}
The final inequality stems from the Hypothesis \eqref{eq:cond1:K:iter}, 
corresponding to the fourth term in \eqref{eq:mathfrak1}. 
Thus, 
with the Lemma (A.1), as given in \cite{figueras2024modified}, 
we conclude that
%where the last inequality follows from
%Hypothesis \eqref{eq:cond1:K:iter} (this corresponds to the fifth term in \eqref{eq:mathfrak1}).
%Hence, by invoking Lemma \ref{lem:aux}, we conclude that
\begin{equation}\label{eq:CDeltaB}
\norm{\bar B}_{\rho-3\delta} < \sigmaB\,, \qquad
\norm{\bar B-B}_{\rho-3\delta} \leq \frac{(\sigmaB)^2 \CDeltaGL}{\gamma^2 \delta^{2\tau+1}}
\norm{E}_\rho 
= \frac{\CDeltaB}{\gamma^2 \delta^{2\tau+1}}
\norm{E}_\rho\,,
\end{equation}
which allows us to arrive at the estimates \eqref{eq:B:iter1} and \eqref{eq:est:DeltaB} for the new object.
%and so, we obtain the estimates \eqref{eq:B:iter1} and \eqref{eq:est:DeltaB} on the new object.

\bigskip

%%%%%%%%%%%%%%%%%%%%%%%%%%%%%%%%%%%%%%%%%%%%%%%%%%%%%%%%%%
% Step 5: Control of the new normal frame
%%%%%%%%%%%%%%%%%%%%%%%%%%%%%%%%%%%%%%%%%%%%%%%%%%%%%%%%%%

\paragraph
{\emph{Step 5: Control of the new frame $N(\te,\vp)$}.}
For controlling the newly adapted normal frame $\bar N(\te,\vp)$, 
it is advantageous to adapt the notation:
\begin{align*}
N(\te,\vp) = {} &  L(\te,\vp) A(\te,\vp) + \tilde N(\te,\vp)\,, \\
\tilde N(\te,\vp) = {} & N^0(\te,\vp) B(\te,\vp)\,, \\
N^0(\te,\vp) = {} & J(K(\te,\vp)) L(\te,\vp) \,,\\
%A(\te,\vp) = {} & -\tfrac{1}{2} (B(\te,\vp)^\top L(\te,\vp)^\top \tilde \Omega(K(\te,\vp))
%L(\te,\vp) B(\te,\vp)) \,, \\
A(\te,\vp) = {} & -\tfrac{1}{2} (\tilde N(\te,\vp)^\top  \Omega(K(\te,\vp))\tilde N(\te,\vp)) \,, \\
B(\te,\vp) = {} & (L(\te,\vp)^\top G(K(\te,\vp)) L(\te,\vp))^{-1}\,.
\end{align*}
As usual, we obtain the new objects $\bar N(\theta,\varphi)$, $\bar A(\theta,\varphi)$, $\bar N^0(\theta,\varphi)$, $\bar{\tilde{N}}(\theta,\varphi)$ and $\bar B(\theta,\varphi)$ by replacing $K(\te,\vp)$ with $\bar K(\te,\vp)$. Note that we controlled $\bar B(\theta,\varphi)$ in Step~4.
Now, we 
%use the notation introduced in \eqref{eq:def:OmegaL} and 
reproduce the computations in \eqref{eq:CDeltaG} and \eqref{eq:CDeltaGL} for the matrix functions.
%where, as usual, the new objects $\bar N(\te,\vp)$, $\bar A(\te,\vp)$, $\bar N^0(\te,\vp)$, $\bar \tilde N(\te,\vp)$ and $\bar B(\te,\vp)$
%are obtained by replacing $K(\theta)$ by $\bar K(\theta)$. Note that the object
%$\bar B(\theta)$ has been controlled in Step~4.
%Now, we recall the notation introduced in \eqref{eq:def:tOmegaL} and
%reproduce the computations in \eqref{eq:CDeltaG} and \eqref{eq:CDeltaGL} for the matrix functions 
%%%\begin{align*}
%%%%\tilde \Omega_L(\te,\vp) = {} & L(\te,\vp)^\top \tilde \Omega(K(\te,\vp)) L(\te,\vp) \,,\\
%%%\tilde \Omega_{\bar L}(\te,\vp) = {} & \bar L(\te,\vp)^\top \tilde  \Omega(\bar K(\te,\vp)) \bar L(\te,\vp) \,,
%%%%\end{align*}
%%%thus obtaining
%%%%\begin {equation} \label{eq:CDeltatOmega}
%%%%\norm{\tilde \Omega \circ \bar K-\tilde \Omega \circ K}_{\rho -2\delta} \leq \frac{\cteDtOmega \CDeltaK}{\gamma^2 \delta^{2\tau}} \norm{E}_\rho =:\frac{\CDeltatOmega}{\gamma^2 \delta^{2\tau}} \norm{E}_\rho \,,
%%%%\end{equation}
%%%%\begin{align}
%%%%\norm{\tilde \Omega_{\bar L}-\tilde \Omega_L}_{\rho-3\delta} 
%%%%\leq {} &\frac{\CLT \ctetOmega\CDeltaL + \CLT \CDeltatOmega\CL \delta + \CDeltaLT \ctetOmega \CL }{\gamma^2 \delta^{2\tau+1}} \norm{E}_\rho \nonumber \\
 %%%% =: {} & \frac{\CDeltatOmegaL}{\gamma^2 \delta^{2\tau+1}} \norm{E}_\rho \,. \label{eq:CDeltatOmegaL}
%%%%\end{align}
%%%and
\begin{align}
\norm{\Omega \circ \bar K - \Omega \circ K}_{\rho -2\delta} \leq {} & \frac{\cteDOmega \CDeltaK}{\gamma^2 \delta^{2\tau}} \norm{E}_\rho =:\frac{\CDeltaOmega}{\gamma^2 \delta^{2\tau}} \norm{E}_\rho \,, \label{eq:CDeltaOmega} \\
%\norm{Z_\H \circ \bar K- Z_\H \circ K}_{\rho -2\delta} \leq {} & \frac{\cteDXH \CDeltaK}{\gamma^2 \delta^{2\tau}} \norm{E}_\rho =:\frac{\CDeltaXH}{\gamma^2 \delta^{2\tau}} \norm{E}_\rho \,, \label{eq:CDeltaXH} \\
%\norm{\Dif_z Z_\H \circ \bar K - \Dif_z Z_\H \circ K}_{\rho -2\delta} \leq {} & \frac{\cteDDXH \CDeltaK}{\gamma^2 \delta^{2\tau}} \norm{E}_\rho =:\frac{\CDeltaDXH}{\gamma^2 \delta^{2\tau}} \norm{E}_\rho \,, \label{eq:CDeltaDXH} \\
%\norm{( \Dif_z Z_\H \circ \bar K - \Dif_z Z_\H \circ K)^\top}_{\rho -2\delta} \leq {} & \frac{\cteDDXHT \CDeltaK}{\gamma^2 \delta^{2\tau}} \norm{E}_\rho =:\frac{\CDeltaDXHT}{\gamma^2 \delta^{2\tau}} \norm{E}_\rho \,, \label{eq:CDeltaDXHT} \\
\norm{J \circ \bar K-J \circ K}_{\rho -2\delta} \leq {} & \frac{\cteDJ \CDeltaK}{\gamma^2 \delta^{2\tau}} \norm{E}_\rho =:\frac{\CDeltaJ}{\gamma^2 \delta^{2\tau}} \norm{E}_\rho \,, \label{eq:CDeltaJ} \\
%\norm{(J \circ \bar K )^{-1} - (J \circ K)^{-1}}_{\rho -2\delta} \leq {} & \frac{\cteDJInv \CDeltaK}{\gamma^2 \delta^{2\tau}} \norm{E}_\rho =:\frac{\CDeltaJInv}{\gamma^2 \delta^{2\tau}} \norm{E}_\rho \,, \label{eq:CDeltaJInv} \\
\norm{(J \circ \bar K-J \circ K)^\top}_{\rho -2\delta} \leq {} & \frac{\cteDJT \CDeltaK}{\gamma^2 \delta^{2\tau}} \norm{E}_\rho =:\frac{\CDeltaJT}{\gamma^2 \delta^{2\tau}} \norm{E}_\rho \,, \label{eq:CDeltaJT} \\
%\norm{(J \circ \bar K-J \circ K)^{-\top}}_{\rho -2\delta} \leq {} & \frac{\cteDJInvT \CDeltaK}{\gamma^2 \delta^{2\tau}} \norm{E}_\rho =:\frac{\CDeltaJTInv}{\gamma^2 \delta^{2\tau}} \norm{E}_\rho \,, \label{eq:CDeltaJTInv} \\
\norm{\bar N^0-N^0}_{\rho-3\delta} \leq {} &
\norm{J}_\B \norm{\bar L-L}_{\rho-3\delta} + \norm{J \circ \bar K- J \circ K}_{\rho-2 \delta} \norm{L}_\rho \nonumber \\
\leq {} & \frac{\cteJ \CDeltaL + \CDeltaJ \CL \delta}{\gamma^2 \delta^{2\tau+1}} \norm{E}_\rho \nonumber \\
=: {} &  \frac{\CDeltaNO}{\gamma^2 \delta^{2\tau+1}} \norm{E}_\rho \,, \label{eq:CDeltaNO} \\
\norm{(\bar N^0)^\top-(N^0)^\top}_{\rho-3\delta} 
\leq {} & \frac{\CDeltaLT \cteJT + \CLT \CDeltaJT \delta }{\gamma^2 \delta^{2\tau+1}} \norm{E}_\rho \nonumber \\
=: {} & \frac{\CDeltaNOT}{\gamma^2 \delta^{2\tau+1}} \norm{E}_\rho \,, \label{eq:CDeltaNOT} \\
\norm{ \bar{\tilde{N}} - \tilde{N} }_{\rho-3\delta} 
\leq {} & \frac{\CDeltaNO \sigmaB + \CNO \CDeltaB }{\gamma^2 \delta^{2\tau+1}} \norm{E}_\rho =:  \frac{\CDeltaTildeN}{\gamma^2 \delta^{2\tau+1}} \norm{E}_\rho \,, \label{eq:CDeltaTildeN} \\
\norm{ \bar{\tilde{N}}^\top - \tilde{N}^\top }_{\rho-3\delta} 
\leq {} & \frac{\sigmaB \CDeltaNOT  + \CDeltaB \CNOT  }{\gamma^2 \delta^{2\tau+1}} \norm{E}_\rho =:  \frac{\CDeltaTildeNT}{\gamma^2 \delta^{2\tau+1}} \norm{E}_\rho \,. \label{eq:CDeltaTildeNT}
\end{align}
Now, we control the matrix $\bar A(\theta, \varphi)$ as follows
\begin{align}
\norm{\bar A-A}_{\rho-3\delta} %\leq {} &
%\frac{1}{2}
%\norm{ \bar{\tilde{N}}^\top  (\Omega \circ \bar K ) \bar{\tilde{N}} -  \tilde{N}^\top  (\Omega \circ \bar K ) \bar{\tilde{N}}}_{\rho-3\delta} \nonumber \\
%& + \frac{1}{2}
%\norm{ \tilde{N}^\top  (\Omega \circ \bar K ) \bar{\tilde{N}} -  \tilde{N}^\top  (\Omega \circ K ) \bar{\tilde{N}} }_{\rho-3\delta}  \nonumber \\
%& + \frac{1}{2}
%\norm{ \tilde{N}^\top  (\Omega \circ  K ) \bar{\tilde{N}} -  \tilde{N}^\top  (\Omega \circ K ) \bar{\tilde{N}} }_{\rho-3\delta} \nonumber \\
%\leq {} &
% \frac{1}{2} \cteOmega \CTildeN \norm{ \bar{\tilde{N}}^\top -  \tilde{N}^\top}_{\rho-3 \delta}  \nonumber  \\
%& +\frac{1}{2} \CTildeNT \norm{ \Omega \circ \bar K  - \Omega \circ K}_{\rho- 2\delta} \CTildeN \nonumber \\
%& +\frac{1}{2} \CTildeNT \cteOmega \norm{ \bar{\tilde{N}}  - \tilde{N} }_{\rho-3 \delta} \CTildeN \nonumber \\
    \leq {} & \dfrac{ \frac{1}{2}\Big( \CDeltaTildeNT  \cteOmega \CTildeN + \CTildeNT  \CDeltaOmega \CTildeN \delta +  \CTildeNT \cteOmega \CDeltaTildeN \Big)}{\gamma^2 \delta^{2\tau+1}} \norm{E}_\rho
=: \frac{\CDeltaA}{\gamma^2 \delta^{2\tau+1}}\norm{E}_\rho \,. \label{eq:CDeltaA}
\end{align}
In this instance, we have employed the fact that $B(\theta,\varphi)^\top = B(\theta,\varphi)$, along with the constants specified in equations \eqref{eq:CDeltaOmega}, \eqref{eq:CDeltaTildeN}, and \eqref{eq:CDeltaTildeNT}. Similarly, the corresponding control is maintained for $\norm{\bar{A}^\top - A^\top}_{\rho - 3\delta}$, as we observe that $A(\theta,\varphi)^\top = -A(\theta,\varphi)$.
We notice that $\CDeltaA=0$ in \textbf{Case III}.

Then, we control the correction of the adapted normal frame:
\begin{align}
\|\bar N-N &\|_{\rho-3\delta} \nonumber \\
%& \leq \norm{\bar L \bar A-\bar L A}_{\rho-3 \delta}+
%\norm{\bar L A- L A}_{\rho-3 \delta} + \norm{\bar{\tilde{N}} - \tilde{N}}_{\rho-3 \delta} \nonumber \\
& \leq \frac{\CL \CDeltaA + \CDeltaL \CA + \CDeltaTildeN}{\gamma^2 \delta^{2\tau+1}} \norm{E}_\rho 
\nonumber \\
& =: \frac{\CDeltaN}{\gamma^2 \delta^{2\tau+1}} \norm{E}_\rho \,, \label{eq:CDeltaN}
\\
\|\bar N^\top-N^\top&\|_{\rho-3\delta} \nonumber \\
& \leq \frac{\CA \CDeltaLT + \CDeltaA \CLT + \CDeltaTildeNT}{\gamma^2 \delta^{2\tau+1}} \norm{E}_\rho \nonumber \\
& =: \frac{\CDeltaNT}{\gamma^2 \delta^{2\tau+1}} \norm{E}_\rho \,. \label{eq:CDeltaNT}
\end{align}
Finally, we have
\begin{align*}
    \norm{ \bar{\tilde{N}} }_{\rho-3\delta} \leq {}& \norm{J \circ \bar K}_{\B \times \TT^\ell_\rho} \norm{ \bar L}_{\rho-3\delta}  \norm{ \bar B}_{\rho-3\delta} \leq \cteJ \CL \sigmaB \leq \CTildeN \,,\\
    \norm{ \bar{\tilde{N}}^\top }_{\rho-3\delta} \leq {}& \norm{ \bar B}_{\rho-3\delta} \norm{ \bar L^\top}_{\rho-3\delta}  \norm{(J \circ \bar K)^\top}_{\B \times \TT^\ell_\rho} \leq \CTildeNT \,,\\
    \norm{ \bar{A}}_{\rho-3\delta} \leq {}& \frac{1}{2} \norm{ \bar{\tilde{N}}^\top }_{\rho-3\delta} \norm{\Omega \circ \bar K}_{\B \times \TT^\ell_\rho} \norm{ \bar{\tilde{N} } }_{\rho-3\delta}  \leq \CA \,,\\
    \norm{ \bar{N}}_{\rho-3\delta}  \leq {}&\CN \,,\\
    \norm{ \bar{N}^\top}_{\rho-3\delta}  \leq {}& \CNT \,.
\end{align*}
\bigskip

%%%%%%%%%%%%%%%%%%%%%%%%%%%%%%%%%%%%%%%%%%%%%%%%%%%%%%%%%%
% Step 6: Control of the action of left operators
%%%%%%%%%%%%%%%%%%%%%%%%%%%%%%%%%%%%%%%%%%%%%%%%%%%%%%%%%%

\paragraph
{\emph{Step 6: Control of the $T_\H(K(\theta,\varphi),\varphi)$ operator}.}
%For convenience, let us define 
%\begin{align*}
%R(\theta,\varphi) :=& \Dif_z Z_\H( K(\te, \vp), \vp) \\
%&- \Dif_z J( K(\te, \vp)) \left[Z_\H( K(\te, \vp), \vp) \right] J( K(\te, \vp))^{-1} \\
%&-J( K(\te, \vp) \Dif_z Z_\H( K(\te, \vp), \vp) J( K(\te, \vp))^{-1}\,,
%\end{align*}
%which implies that $T_\H(K(\theta,\varphi),\varphi) = \Omega(K(\te,\vp)) R(\theta,\varphi)$.
We consider the correction $T_\H(\bar K(\theta,\varphi),\varphi)$ and $T_\H(\bar K(\theta,\varphi),\varphi)^\top$,i.e., 
\begin{align}
\norm{\bar T_\H }_\rho  \leq {} & \cteTH \,, \label{eq:CbarTH} \\
\norm{\bar T^\top_\H }_\rho  \leq {} & \cteTHT \,. \label{eq:CbarTHT}
\end{align}
The last two inequalities are satisfied since $T_\H$ is a global object defined in $\B\times\TT^\ell_\rho$, 
hypothesis $H_1$ and Step~2.
On the other hand, the control of the correction to the new $T_\H(\theta, \varphi)$,
\begin{equation}\label{eq:CDeltaTH}
\begin{split}
\norm{T_\H \circ \bar K - T_\H \circ K}_{\rho -2\delta}
\leq {} &  \norm{ \Dif_z  T_\H}_{\rho -2\delta}  \norm{ \bar K - K}_{\rho -2\delta} \\
\leq {} &  \frac{\cteDTH \CDeltaK}{\gamma^2 \delta^{2\tau}} \norm{E}_\rho   =: \frac{\CDeltaTH}{\gamma^2 \delta^{2\tau}} \norm{E}\,.
%%%%%%\leq {} &  \norm{ (\Omega \circ \bar K) \bar R - (\Omega \circ K) R}_{\rho -2\delta} \\
%%%%%%\leq {} & \norm{  (\Omega \circ \bar K) \bar R  -   (\Omega \circ \bar K) R }_{\rho -2\delta} + \norm{  (\Omega \circ \bar K)  R  -   (\Omega \circ K) R }_{\rho -2\delta}  \\
%%%%%%\leq {} & \norm{\Omega }_{\B\times\TT^\ell_\rho}\norm{  \bar R  - R }_{\rho -2\delta} + \norm{(\Omega \circ \bar K) - (\Omega \circ K)}_{\rho -2\delta}  \norm{R }_{\rho} \\
%%%%%%\leq {} &  \frac{ \cteOmega\CDeltaR  + \CDeltaOmega \CR}{\gamma^2 \delta^{2\tau}} \norm{E}_\rho =: \frac{\CDeltaTH}{\gamma^2 \delta^{2\tau}} \norm{E}_\rho \\
%& + \left( \bar \Omega(\bar K(\te,\vp)) -   \Omega( K(\te,\vp)) \right) R(\theta,\varphi)\,.
\end{split}
\end{equation}
Finally,
\begin{equation}\label{eq:CDeltaTHT}
\begin{split}
\norm{ (T_\H \circ \bar K )^\top - (T_\H \circ  K )^\top}_{\rho -2\delta}
\leq {} &   \frac{\cteDTHT \CDeltaK}{\gamma^2 \delta^{2\tau}} \norm{E}_\rho   =: \frac{\CDeltaTHT}{\gamma^2 \delta^{2\tau}} \norm{E} \,.
\end{split}
\end{equation}
{\color{blue}
}
\bigskip

%%%%%%%%%%%%%%%%%%%%%%%%%%%%%%%%%%%%%%%%%%%%%%%%%%%%%%%%%%
% Step 7: Control of the new torsion condition
%%%%%%%%%%%%%%%%%%%%%%%%%%%%%%%%%%%%%%%%%%%%%%%%%%%%%%%%%%

\paragraph
{\emph{Step 7: Control of the new non-degeneracy condition}.}
We now establish that the non-degeneracy (twist) condition with the new torsion matrix is controled,
%\begin{align*}
%\bar T(\te,\vp) = & - \dfrac{1}{2}\bar{\tilde{N}}(\te,\vp)^\top T_h(\bar K(\te,\vp),\vp)  \bar{\tilde{N}}(\te,\vp)  - \dfrac{1}{2} \bar{\tilde{N}}(\te,\vp)^\top  T_h(\bar K(\te,\vp),\vp)^\top \bar{\tilde{N}}(\te,\vp) \\
%& + \bar{\tilde{N}} (\te,\vp)^\top T_h(\bar K(\te,\vp),\vp)^\top \bar N(\te,\vp)  + \bar N(\te,\vp)^\top T_h(K(\te,\vp),\vp) \bar{\tilde{N}}(\theta, \varphi)\,,
%\end{align*}
%is the infinitesimal displacement of the normal subbundle for the linearized dynamics.
%At this point, intermediate
%computations will be skipped for convenience. Such
%details are left to the reader (they are analogous to previous computations).
%We start by controlling the correction of the displacement. To this end, we observe that
%\begin{align*}
%\Loper{\bar N}(\theta) - \Loper{N}(\theta) = {} & 
%\Dif X_{\H}(\bar K(\theta)) \bar N(\theta) -
%\Dif X_{\H}(K(\theta)) N(\theta)
%\\
%& + \Lie{\omega} \bar N(\theta) - \Lie{\omega} N(\theta) \,,
%\end{align*}
%and we readily obtain
%\begin{align}
%\norm{\Loper{\bar N} - \Loper{N}}_{\rho-3\delta} \leq {} &
%\frac{\cteDXH \CDeltaN + \cteDDXH \CDeltaK \CN \delta + \CDeltaLieN}{\gamma^2 \delta^{2\tau+1}} \norm{E}_\rho \nonumber \\
%=: {} & \frac{\CDeltaLoperN}{\gamma^2 \delta^{2\tau+1}} \norm{E}_\rho \,.
%\label{eq:CDeltaLoperN}
%\end{align}
%Now, we can control the correction of the torsion matrix
\begin{equation} 
\begin{split}
\norm{\bar T -T}_{\rho-3\delta} %& \leq \dfrac{1}{2} \norm{\bar{\tilde{N}}^\top (T_h \circ \bar K) \bar{\tilde{N}} - \tilde{N}^\top (T_h \circ K) \tilde{N} }_{\rho-3\delta} \\
& \leq \dfrac{1}{2}  \frac{\CDeltaTildeNT \cteTH \CTildeN + \CTildeNT \CDeltaTH \CTildeN \delta + \CTildeNT \cteTH \CDeltaTildeN}{\gamma^2 \delta^{2\tau+1}} \norm{E}_\rho  \\
& \quad +  \frac{1}{2}  \frac{\CDeltaTildeNT \cteTHT \CTildeN + \CTildeNT \CDeltaTHT \CTildeN \delta + \CTildeNT \cteTHT \CDeltaTildeN}{\gamma^2 \delta^{2\tau+1}} \norm{E}_\rho \\
& \quad + \frac{\CDeltaTildeNT \cteTHT \CN + \CTildeNT \CDeltaTHT \CN \delta + \CTildeNT \cteTHT \CDeltaN}{\gamma^2 \delta^{2\tau+1}} \norm{E}_\rho  \\
& \quad +  \frac{\CDeltaNT \cteTH \CTildeN + \CNT \CDeltaTH \CTildeN \delta + \CNT \cteTH \CDeltaTildeN}{\gamma^2 \delta^{2\tau+1}} \norm{E}_\rho \\
& =: \frac{\CDeltaT}{\gamma^2 \delta^{2\tau+1}} \norm{E}_\rho\,. \label{eq:CDeltaT}
\end{split}
\end{equation}
Now,
we introduce the constant
\[
\CDeltaTOI :=  (\sigmaT)^2 \CDeltaT\,,
\]
and verify condition (A.1) in Lemma (A.1) from \cite{figueras2024modified} %%Pedro: Quitar para la tesis
%% and check the condition \eqref{eq:lem:aux} in Lemma \ref{lem:aux}: %%Pedro: Quitar para la tesis
\begin{align}
\frac{(\sigmaT)^2 |\aver{\bar T} - \aver{T}|}{\sigmaT-|\aver{T}^{-1}|} \leq {} &
\frac{(\sigmaT)^2 \norm{\bar T - T}_{\rho-3\delta}}{\sigmaT-|\aver{T}^{-1}|} \leq 
%\frac{2 (\sigmaT)^2 \CDeltaT}{\sigmaT-|\aver{T}^{-1}|} \frac{\norm{E}_\rho}{\gamma^2 \delta^{2\tau+1}}
%\nonumber \\
%= {} & 
\frac{\CDeltaTOI}{\sigmaT - |\aver{T}^{-1}|} \frac{\norm{E}_\rho}{\gamma^2 \delta^{2\tau+1}}
 < 1\,,
\label{eq:ingredient:iter:8}
\end{align}
where the last inequality follows from
Hypothesis \eqref{eq:cond1:K:iter} (this corresponds to the sixth term in \eqref{eq:mathfrak1}).
Hence, by invoking Lemma (A.1) from \cite{figueras2024modified}, 
we conclude that
%%Hence, by invoking Lemma \ref{lem:aux}, we conclude that %%Pedro: Quitar para la tesis
\begin{equation}\label{eq:CDeltaTOI}
|\aver{\bar T}^{-1}| <\sigmaT\,, \qquad
|\aver{\bar T}^{-1} - \aver{T}^{-1}| \leq \frac{ (\sigmaT)^2 \CDeltaT}{\gamma^2 \delta^{2\tau+1}}
\norm{E}_\rho =: \frac{\CDeltaTOI}{\gamma^2 \delta^{2\tau+1}}
\norm{E}_\rho\,,
\end{equation}
and so, we obtain the estimates \eqref{eq:B:iter1} and \eqref{eq:est:DeltaT} on the new object.
\end{proof}

\subsection{Convergence of the iterative process}\label{ssec:proof:KAM}
We are ready to prove our KAM theorem for Lagrangian tori. 
Once we established the quadratic procedure in Section \ref{ssec:iter:lemmas}, 
proving the convergence of the scheme follows by standard arguments. 
However, we will carefully explain the required computations as our goal is to present explicit conditions for the KAM theorem.

\begin{proof}
[Proof of Theorem \ref{theo:KAM}]
Let us consider the approximate torus $K_0:=K$ with initial error $E_0:=E$. We
also introduce $B_0:=B$ and $T_0:=T$ associated with the initial approximation.
By applying Lemma \ref{lem:KAM:inter:integral} recursively, we obtain new objects
$K_s:=\bar K_{s-1}$,
$E_s:=\bar E_{s-1}$,
$B_s:=\bar B_{s-1}$ and
$T_s:=\bar T_{s-1}$. 
At each step, 
the domain of analyticity of these objects gets reduced. 
To characterize this, we introduce parameters
$a_1>1$, $a_2>1$, $a_3:=3 \frac{a_1}{a_1-1} \frac{a_2}{a_2-1}$ and
define
\[
\rho_0:=\rho, \quad
\delta_0:=\frac{\rho_0}{a_3}, \quad
\rho_s:=\rho_{s-1} - 3 \delta_{s-1}, \quad
\delta_s:= \frac{\delta_0}{a_1^s}, \quad
\rho_\infty:= \lim_{s\to \infty} \rho_s = \frac{\rho_0}{a_2}\,.
\]
%We can select the above parameters (together with the
%parameter $\cauxT$) to optimize the convergence of the KAM
%process for a particular problem (see \cite{FiguerasHL17}). 

Assuming that we have applied the Iterative Lemma \ref{lem:KAM:inter:integral} $s$ times successfully, 
we can denote the objects at the $s$-step of the quasi-Newton method as $K_s$, $E_s$, $B_s$, and $T_s$. 
It is important to note that condition  \eqref{eq:cond1:K:iter} is required at every step, 
but the construction has been performed in a way that we can control 
$\norm{\Dif K_s}_{\rho_s}$,
$\norm{(\Dif K_s)^\top}_{\rho_s}$,
$\norm{B_s}_{\rho_s}$,
$\dist(K_s(\TT^{n}_{\rho_s} \times \TT^{\ell}_{\rho_s}  ),\partial (\B\times \TT^\ell_\rho))$,
and
$\abs{\aver{T_s}^{-1}}$
uniformly with respect to $s$. 
This means that the constants obtained in Table \ref{tab:constants:all} and
Table \ref{tab:constants:all:2}) are the same for all steps, 
as we consider the worst value of $\delta_s$, that is, $\delta_0 = \rho_0/a_3$.

The first computation is monitoring the sequence $E_s$ of invariance errors
\begin{equation}\label{eq:conv:geom}
\begin{split}
\norm{E_s}_{\rho_s} < {} & 
\frac{\CE}{\gamma^4 \delta_{s-1}^{4\tau}} \norm{E_{s-1}}_{\rho_{s-1}}^2 =
\frac{\CE a_1^{4\tau(s-1)}}{\gamma^4 \delta_0^{4\tau}} \norm{E_{s-1}}_{\rho_{s-1}}^2 \\
< {} & \left( \frac{a_1^{4 \tau} \CE \norm{E_0}_{\rho_0}}{\gamma^4 \delta_0^{4\tau}}
\right)^{2^s-1} a_1^{-4\tau s} \norm{E_0}_{\rho_0} 
< a_1^{-4\tau s} \norm{E_0}_{\rho_0}
\,, 
\end{split}
\end{equation}
where we used the sums
$1+2+\ldots+2^{s-1}=2^s-1$, and
$1(s-1)+2(s-2)+2^2(s-3)\ldots+2^{s-2} 1 = 2^s-s-1$.
Notice that we also used the inequality
\begin{equation}\label{eq:theo:conv:1}
\frac{a_1^{4 \tau} \CE \norm{E_0}_{\rho_0}}{\gamma^4 \delta_0^{4\tau}} <1\,,
\end{equation}
which is included in \eqref{eq:KAM:HYP}.
From equation \eqref{eq:conv:geom}, 
we verify Hypothesis  \eqref{eq:cond1:K:iter} of the iterative Lemma, 
enabling us to proceed with step $s + 1$. 
The necessary and sufficient condition will be incorporated into Hypothesis \eqref{eq:KAM:HYP} of the KAM theorem.
The first inequality in \eqref{eq:cond1:K:iter} reads
\[
\frac{\CE \norm{E_s}_{\rho_s}}{\gamma^4 \delta_s^{4\tau}} \leq
\frac{\CE a_1^{-4\tau s} \norm{E_0}_{\rho_0}}{\gamma^4 \delta_s^{4\tau}} 
= 
\frac{\CE \norm{E_0}_{\rho_0}}{\gamma^4 \delta_0^{4\tau}} 
\leq a_1^{-4\tau}
< 1 \,,
\]
where we used \eqref{eq:conv:geom} and \eqref{eq:theo:conv:1}.
%{\color{magenta}
%The first term in the constant of \eqref{eq:mathfrak1} is estimated using \eqref{eq:conv:geom}, i.e.,
%The left inequality in \eqref{eq:cond1:K:iter} has several terms
%(which correspond to the different components in \eqref{eq:mathfrak1}).
%The first of them, using again \eqref{eq:conv:geom}, is given by
%\begin{equation}\label{eq:theo:conv:2}
%\frac{\norm{E_s}_{\rho_s}}{\delta_s} \leq \frac{a_1^s a_1^{-4\tau s} \norm{E_0}_{\rho_0}}{\delta_0}
%\leq \frac{\norm{E_0}_{\rho_0}}{\delta_0} < \cauxT\,.
%\end{equation}
%We used $\tau \geq n +\ell -1 \geq 1$, 
%to ensure $1-4\tau<0$. 
%The last inequality in \eqref{eq:theo:conv:2} is included in
%\eqref{eq:KAM:HYP}.
%} 
The first term in \eqref{eq:mathfrak1}  is guaranteed by
performing the  computation
\begin{equation}\label{eq:theo:conv:3}
\frac{2 \Csym \norm{E_s}_{\rho_s}}{\gamma \delta_s^{\tau+1}}
\leq 
\frac{2 \Csym a_1^{s(\tau+1)} a_1^{-4\tau s} \norm{E_0}_{\rho_0}}{\gamma \delta_0^{\tau+1}}
\leq 
\frac{2 \Csym \norm{E_0}_{\rho_0}}{\gamma \delta_0^{\tau+1}}
< 1 \,,
\end{equation}
where we used \eqref{eq:conv:geom}, the fact that $1-3\tau<0$,
and we have included the last inequality in \eqref{eq:KAM:HYP}.
The remaining conditions are similar. 
It is important to note that at the $s$-th step, 
the objects involved are $\Dif K_s$, $(\Dif K_s)^\top$, $B_s$ and $\aver{T_s}^{-1}$. 
Therefore, 
it is necessary to establish a relationship between these conditions and the initial objects $\Dif K_0$, 
$(\Dif K_0)^\top$, 
$B_0$ and 
$\aver{T_0}^{-1}$. 
For example, 
the second term in \eqref{eq:mathfrak1} reads
\[
\left(\frac{ n\CDeltaK}{\sigmaDK-\norm{\Dif K_s}_{\rho_s}}\right) \frac{\norm{E_s}_{\rho_s}}{\gamma^2 \delta_s^{2\tau+1}}< 1\,,
\]
and it is verified by conducting the following calculations
\begin{align}
\norm{\Dif K_s}_{\rho_s} + & \frac{n \CDeltaK\norm{E_s}_{\rho_s}}{\gamma^2 \delta_s^{2\tau+1}} 
<  
\norm{\Dif K_0}_{\rho_0} + \sum_{j=0}^s \frac{n \CDeltaK \norm{E_j}_{\rho_j}}{\gamma^2 \delta_j^{2\tau+1}} \nonumber \\
< {} &  
\norm{\Dif K_0}_{\rho_0} + \sum_{j=0}^\infty \frac{n \CDeltaK
a_1^{(1-2\tau)j}}{\gamma^2 \delta_0^{2\tau+1}}
\norm{E_0}_{\rho_0} \nonumber
\\
= {} &  
\norm{\Dif K_0}_{\rho_0} + \frac{n
\CDeltaK \norm{E_0}_{\rho_0}}{\gamma^2 \delta_0^{2\tau+1}}
\left(
\frac{1}{1-a_1^{1-2\tau}}
\right) < \sigmaDK\,. \label{eq:theo:conv:4}
\end{align}
The last inequality is incorporated in equation \eqref{eq:KAM:HYP}. 
After reproducing the same computations,
we obtain the 
third,
fourth, and
fiftht terms
in \eqref{eq:mathfrak1}, 
related to $(\Dif K_s)^\top$, $B_{s}$ and $\aver{T_s}^{-1}$. 
Finally, the sixth term in  \eqref{eq:mathfrak1} is verified as follows
\begin{align}
\dist (K_s(\TT^{n}_{\rho_s} \times \TT^{\ell}_{\rho_s}  ),\partial (\B\times\TT^\ell_\rho)) - & \frac{\CDeltaK}{\gamma^2 \delta_s^{2\tau}} \norm{E_s}_{\rho_s}> 
\dist (K_0(\TT^{n}_{\rho_0} \times \TT^{\ell}_{\rho_0} ),\partial (\B\times\TT^\ell_\rho)) \nonumber \\
& - \sum_{j=0}^\infty \frac{\CDeltaK a_1^{-2\tau j} \norm{E_0}_{\rho_0}}{\gamma^2 \delta_0^{2\tau}} 
\nonumber \\
& = \dist (K_0(\TT^{n}_{\rho_0} \times \TT^{\ell}_{\rho_0} ),\partial (\B\times\TT^\ell_\rho)) \nonumber \\ 
&- \frac{\CDeltaK \norm{E_0}_{\rho_0}}{\gamma^2 \delta_0^{2\tau}}
\frac{1}{1-a_1^{-2\tau}} > 0\,,
\label{eq:check:dist}
\end{align}
the final inequality is included into \eqref{eq:KAM:HYP}.

Having guaranteed all hypotheses of Lemma \ref{lem:KAM:inter:integral},
we collect the inequalities
\eqref{eq:theo:conv:1},
%%\eqref{eq:theo:conv:2},
\eqref{eq:theo:conv:3},
\eqref{eq:theo:conv:4} and
\eqref{eq:check:dist} that are included into
hypothesis \eqref{eq:KAM:HYP}. This follows
by introducing the constant $\mathfrak{C}_1$ as
\begin{equation}\label{eq:cte:mathfrakC1}
\mathfrak{C}_1:=\max
\left\{
(a_1 a_3)^{4\tau} \CE \, , \, (a_3)^{2\tau+1} \gamma^2 \rho^{2\tau-1} \CDeltatot
\right\}
\end{equation}
where
\begin{align}
\CDeltaI := {} &
\max
\Bigg\{
\frac{n \CDeltaK}{\sigmaDK-\norm{\Dif K}_{\rho}} \, , \,
\frac{2n \CDeltaK}{\sigmaDKT-\norm{(\Dif K)^\top}_{\rho}} \, , \label{eq:CDeltaI}
\\
& \qquad\qquad\qquad\qquad  \frac{\CDeltaB}{\sigmaB-\norm{B}_{\rho}} \, , \,
\frac{\CDeltaTOI}{\sigmaT-\abs{\aver{T}^{-1}}}
\Bigg\} \,, \nonumber \\
\CDeltaII := {} & \frac{\CDeltaK \delta}{\dist(K(\TT^d_{\rho}),\partial \B)}\,,
\label{eq:CDeltaII} \\
\CDeltatot := {} & \max
\Bigg\{%{\color{magenta} \frac{\gamma^2 \delta^{2\tau}}{\nu}}
\Csym \gamma \delta^\tau \, , \,
\frac{\CDeltaI}{1-a_1^{1-2\tau}} \, , \, 
\frac{\CDeltaII}{1-a_1^{-2\tau}}
\Bigg\}
\label{eq:CDeltatot} \,.
\end{align}
It is worth highlighting that we have reverted to the original notation, 
where $K=K_0$, $B=B_0$, $T=T_0$, $\rho=\rho_0$
and $\delta=\delta_0$ for the new initial objects.

Through induction, we can repeat the iterative process an infinite number of times.
To clarify, we have:
\[
\norm{E_s}_{\rho_s} < a_1^{-4 \tau s} \norm{E}_{\rho} \longrightarrow  0\
\quad \mbox{when} \quad s\rightarrow \infty\,.
\]
The iterative scheme leads to the convergence of a true quasi-periodic torus, denoted by $K_\infty$.
As a result of the output of Lemma \ref{lem:KAM:inter:integral}, this object
satisfies
$K_\infty \in \Anal(\TT^{n}_{\rho_\infty} \times \TT^{\ell}_{\rho_\infty} )$ and
\[
\norm{\Dif K_\infty}_{\rho_\infty} < \sigmaDK\,,
\qquad
\norm{(\Dif K_\infty)^\top}_{\rho_\infty} < \sigmaDKT\,,
\qquad
\dist(K_\infty(\TT^{n} \times \TT^{\ell} ),\pd (\B\times\TT^\ell_\rho)) > 0\,.
\]
To control the limiting objects, we repeat the computations in  \eqref{eq:check:dist}  as follows
\begin{align}
&\norm{K_\infty - K}_{\rho_\infty} \leq {} \sum_{j=0}^\infty
\norm{K_{j+1} - K_j}_{\rho_{j+1}} < \frac{\CDeltaK \norm{E}_{\rho}}{\gamma^2 \delta^{2\tau}}
\frac{1}{1-a_1^{-2\tau}} =:  
\frac{\mathfrak{C}_2 \norm{E}_{\rho}}{\gamma^2 \rho^{2\tau}} \label{eq:Cmathfrak2}\,,
%&\abs{\aver{c \circ K_\infty} - \aver{c \circ K}} \leq {}
%\norm{\Dif c}_\B \norm{K_\infty - K}_{\rho_\infty} < 
%\frac{\cteDc \mathfrak{C}_2 \norm{E}_{\rho}}{\gamma^2 \rho^{2\tau}} =:
%\frac{\mathfrak{C}_3 \norm{E}_{\rho}}{\gamma^2 \rho^{2\tau}} \label{eq:Cmathfrak3}\,, 
\end{align}
thus obtaining the estimates in \eqref{eq:close}.
This completes the proof of the KAM theorem to Lagrangian Tori.
\end{proof}

\section{Acknowledgements}
We would like to express our gratitude to the following organizations for their support: 
CONACYT for the PhD fellowship, 
DGAPA-UNAM through project PAPIIT IN103423, IN104725 
and the support received from the project PID2021-125535NB-I00 (MCIU/AEI/FEDER, UE). 
Additionally, we acknowledge the funding received from the Severo Ochoa and María de Maeztu Program for Centers and Units of Excellence in R\&D (CEX2020-001084-M),
P.P. has been partially supported by the Spanish Government grants PID2019-104851GB-I00 (MICINN/FEDER,UE) and PID2021-125535NB-I00 (MCIU/AEI/FEDER, UE).
A.H. has been supported by the Spanish grant PID2021-
125535NB-I00 (MCIU/AEI/FEDER, UE) and the Severo
Ochoa and María de Maeztu Program for Centers and Units of Excellence in R\&D (CEX2020-001084-M).
We are also grateful to
A. Viero,
J.-Ll. Figueras, J.M. Mondelo and \'A. Fern\'andez-Mora for fruitful discussions.
We also express our gratitude to the Departments of Mathematics and Computer Science at the University of Barcelona, 
and to the Department of Mathematics at Uppsala University, 
for their hospitality during our stays at their institutions while carrying out this work.

%%%\bibliographystyle{alpha} % You may use a different style adapted to your field  
%%%\bibliography{references}

\newcommand{\etalchar}[1]{$^{#1}$}

\break
\appendix

\section{Extended symplectic formulation for QP Hamiltonian}
\label{ssec:extended}
A different approach to handling time-dependent quasiperiodic Hamiltonian systems involves extending the symplectic structure by introducing auxiliary variables. 
This method effectively transforms the system into an autonomous form, 
allowing the use of standard symplectic techniques. 
By augmenting the phase space with fictitious variables, 
one can preserve the Hamiltonian framework while incorporating time dependence naturally. 
In this appendix, 
we outline the construction of this extended formulation and discuss its advantages in the context of KAM theory.

We consider a qp-time-dependent Hamiltonian system of the form
    \[
        \begin{array}{rcl}
            \H: \RR^{2n} \times \TT^{\ell} & \longrightarrow &\RR \,, \\
            (z, \varphi) & \longmapsto & \H(z,\varphi) \,,
        \end{array}
    \]
    is to extend the symplectic formalism by introducing fictitious variables, $\I$,
    associated with momentum, 
    which are conjugate to the quasi-periodic time variables, i.e.
\[
\begin{array}{rcl}
    \hat{\H}: \RR^{2n} \times \TT^{\ell} \times \RR^{\ell}& \longrightarrow &\RR \,, \\
    (z, \varphi, \I) & \longmapsto & \hat{\H}(z, \varphi, I) =  \H(z,\varphi) + \I \cdot \alpha \,,
\end{array}
\]
with $\alpha\in\RR^{\ell}$ external frequencies. 
The corresponding symplectic vector field is then given by:
\[
    Z_{\hat \H}(z,\varphi, \I)= \hat{\Omega}(z)^{-1} (\Dif\hat{\H}(z,\varphi,\I))^\top\,,
\]
with,
\[
    \hat{\Omega}(z)^{-1} =    
\begin{pmatrix}  % Matriz principal con paréntesis grandes
    \Omega(z)^{-1} 
  &
    O_{2n\times 2\ell}
  \\
    O_{2\ell \times 2n}
  &
    \Omega_{\ell}^{-1}
  %%\begin{pmatrix} % Cuarta submatriz (inferior derecha) con paréntesis
  %%  O_{\ell} & -I_{\ell} \\
  %%  I_{\ell} & O_{\ell}
  %%\end{pmatrix}
\end{pmatrix}\,,
\]
where $\Omega(z)$ denotes a $2n$ symplectic matrix, 
while $\Omega_{\ell}= \begin{pmatrix} % Cuarta submatriz (inferior derecha) con paréntesis
    O_{\ell} & -I_{\ell} \\
    I_{\ell} & O_{\ell}
    \end{pmatrix}$ 
    is the $2\ell$ standar symplectic matrix. 
    Moreover, 
    the matrix representations of the Riemannian metric and the linear isomorphism are expressed as follows.
\[
    \hat{G}(z)=    
    \begin{pmatrix}  
        G(z) & O_{2n\times 2\ell} \\
        O_{2\ell \times 2n} & I_{2\ell}
\end{pmatrix}\,, \quad
    \hat{J}(z)=\begin{pmatrix}  
        J(z) & O_{2n\times 2\ell} \\
        O_{2\ell \times 2n} & \Omega_\ell
\end{pmatrix}\,.
\]
    Analogous to \cite{HaroCFLM16,haro2019posteriori}, 
    we could to look a parameterization $\hat K: \TT^n\times\TT^\ell  \to \mani \times \TT^\ell \times \RR^\ell$
\[
    \hat{K}(\theta, \varphi) = 
    \begin{pmatrix}
        K(\theta, \varphi) \\
        \Phi(\theta,\varphi) \\ %% + \varphi
        \I(\theta, \varphi)   
    \end{pmatrix} \,.
\]
That satisfies the follow inifinitesimal invariance equation,
with internal dynamics is given by the constant vector field $(\omega, \alpha)$,  
%%\begin{equation}
\[
    Z_{\hat \H}(K(\theta, \varphi),\Phi(\theta, \varphi), \I(\theta, \varphi)) 
    =  \Dif(K(\theta, \varphi),\Phi(\theta, \varphi), \I(\theta, \varphi) ) (\omega, \alpha)^\top \,.
\]
Where we obtain the follow equations,
\begin{align}
    Z_\H(\theta,\varphi) =: \Omega(z)^{-1}(\Dif_z h(\theta, \varphi))^\top= & -\Lie{\omega,\alpha}K(\theta, \varphi) \,, \label{eq:inv-eq-K} \\
    \alpha = & -\Lie{\omega,\alpha}\Phi(\theta, \varphi) \,, \label{eq:inv-eq-P} \\
    -\left(\Dif_{\varphi}h(\theta, \varphi) \right)^\top = & -\Lie{\omega,\alpha}\I(\theta, \varphi) \,. \label{eq:inv-eq-I}
\end{align}
    By solving these equations, 
    we can obtain a parameterization of an invariant torus of dimension $2(n+\ell)$. 
    Since we assume that $\varphi \in \TT^\ell$,
    we obtain that,
    \[
        \Phi(\theta,\varphi) = \varphi\,,
    \]
    i.e.,
    \[
        \hat{K}(\theta, \varphi) = 
        \begin{pmatrix}
            K(\theta, \varphi) \\
            \varphi \\
            \I(\theta, \varphi)   
        \end{pmatrix} \,.
    \]
    Therefore, the tangent bundle, 
    $\hat L: \TT^n\times\TT^\ell  \to \RR^{2(n+\ell)\times(n+\ell)}$
    takes the following form,
    \[
        \hat{L}(\theta, \varphi) = 
        \begin{pmatrix}
            \Dif_\theta K(\theta, \varphi) & \Dif_\varphi K(\theta, \varphi)\\
            O_{\ell \times n} & I_\ell \\
            \Dif_\theta \I(\theta, \varphi) &  \Dif_\varphi \I(\theta, \varphi)
        \end{pmatrix} \,.
    \]
    In comparison with the approach in this paper, 
    by assuming $\varphi \in \TT^\ell$,
    therefore 
    We not only reduce the dimension of the torus, 
    from $2(n+\ell)$ to $2n$,
    also the number of equations to solve, 
    thereby lowering the computational cost of the resulting algorithms derived from the proof.

\section{Computation of a novel Torsion expression}\label{ssec:torsion}
In this appendix, 
we derive an expression for the torsion without involving Lie derivatives, 
explicitly obtaining equation \eqref{eq:newT} from \eqref{eq:T}. 
%    In this appendix, 
%we present the computation of an expression for the torsion without involving Lie derivatives, 
%deriving equation \eqref{eq:newT} from \eqref{eq:T}. 

\begin{lemma}\label{lem:torsion}
Let $T: \TT^n \times \TT^\ell \to \RR^n$ be the torsion matrix, 
as defined in equation \eqref{eq:T}. 
Then, 
it satisfies the expression given in equation \eqref{eq:newT}.
\end{lemma}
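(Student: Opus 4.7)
The plan is to start from the original torsion formula $T(\te,\vp) = N(\te,\vp)^\top \Omega(K(\te,\vp)) \Loper{N}(\te,\vp)$ given in \eqref{eq:OldT} and systematically eliminate every Lie derivative that appears through $\Loper{N}$, using three ingredients: the invariance of the tangent frame ($\Loper{L} = O$), the invariance equation $\Lie{\omega,\alpha}K = -Z_\H(K,\vp)$, and the geometric relation $J = -\Omega^{-1} G$ together with the Jacobi identity \eqref{eqn:Jacobi-indentity}.

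First I would decompose $N = L A + \tilde N$ with $\tilde N = N^0 B = J L B$ and apply $\Loper$ separately to each summand. Leibniz's rule plus $\Loper{L} = O$ gives $\Loper(LA) = L\,\Lie{\omega,\alpha}A$, so $N^\top \Omega \Loper(LA) = \Lie{\omega,\alpha}A$ thanks to $N^\top \Omega L = I_n$. For $\Loper(JLB)$, using $\Lie{\omega,\alpha}(J\circ K) = -\Dif_z J(K)[Z_\H(K,\vp)]$ and $\Lie{\omega,\alpha}L = -\Dif_z Z_\H\, L$ produces the combination
\[
\Dif_z Z_\H\, J - \Dif_z J[Z_\H] - J\,\Dif_z Z_\H \;=\; \Omega^{-1} T_\H J,
\]
where the last equality is exactly the content of \eqref{eq:newTh}. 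Consequently $\Loper \tilde N = \Omega^{-1} T_\H \tilde N + N^0\,\Lie{\omega,\alpha}B$, and we arrive at the intermediate identity
\[
T = \Lie{\omega,\alpha}A + N^\top T_\H \tilde N + N^\top \Omega N^0\,\Lie{\omega,\alpha}B,
\]
in which the only remaining Lie derivatives are those of the scalar-like blocks $A$ and $B$.

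Next I would treat $\Lie{\omega,\alpha}A$ and $\Lie{\omega,\alpha}B$ separately. Using $A = -\tfrac12 \tilde N^\top \Omega \tilde N$, the expression for $\Lie{\omega,\alpha}\tilde N$ just obtained, and the Jacobi-derived identity $\Lie{\omega,\alpha}(\Omega \circ K) = (\Dif_z Z_\H)^\top \Omega + \Omega\,\Dif_z Z_\H$, a direct calculation shows that all $\Dif_z Z_\H$ contributions cancel and we get
\[
\Lie{\omega,\alpha}A = \tfrac12 \tilde N^\top T_\H^\top \tilde N - \tfrac12 \tilde N^\top T_\H \tilde N - \tfrac12\bigl((\Lie{\omega,\alpha}B)^\top N^{0\top}\Omega \tilde N + \tilde N^\top \Omega N^0 \Lie{\omega,\alpha}B\bigr).
\]
For $\Lie{\omega,\alpha}B$, I would differentiate $B^{-1} = L^\top G L$ and combine $\Lie{\omega,\alpha}(G\circ K) = -\Dif_z G(K)[Z_\H]$ with the identity
\[
(\Dif_z Z_\H)^\top G + \Dif_z G[Z_\H] + G\,\Dif_z Z_\H = T_\H J,
\]
which follows from Jacobi, the definition of $T_\H$, and $G = -\Omega J$. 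This gives the compact formula $\Lie{\omega,\alpha}B = B L^\top T_\H J L B$.

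The final step is the crucial cancellation. Using the Lagrangian identities $\tilde N^\top \Omega \tilde N = -2A$, $L^\top \Omega N^0 = -B^{-1}$, and the antisymmetry $A^\top = -A$, one computes $\tilde N^\top \Omega N^0 \Lie{\omega,\alpha}B = -2 A L^\top T_\H \tilde N$ and its transpose $(\Lie{\omega,\alpha}B)^\top N^{0\top}\Omega \tilde N = -2 \tilde N^\top T_\H^\top L A$. Inserting all these into the intermediate identity above, every $\Lie{\omega,\alpha}B$-contribution reorganizes into a single term $\tilde N^\top T_\H^\top L A$, so that
\[
T = \tfrac12 \tilde N^\top T_\H^\top \tilde N - \tfrac12 \tilde N^\top T_\H \tilde N + \tilde N^\top T_\H^\top L A + N^\top T_\H \tilde N,
\]
and absorbing $\tilde N^\top T_\H^\top L A + \tilde N^\top T_\H^\top \tilde N = \tilde N^\top T_\H^\top N$ yields \eqref{eq:newT}. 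The hard part of the argument will be the $\Lie{\omega,\alpha}B$ bookkeeping: the pairings $\tilde N^\top \Omega N^0$ and $L^\top \Omega N^0$ do not have obviously clean forms in \textbf{Case II}, and the cancellation only emerges after using both $\tilde N^\top \Omega \tilde N = -2A$ and the intertwining $J^\top \Omega = G$. In \textbf{Case III} the argument collapses: $A = O_n$, $J^2 = -I_{2n}$ forces $\Lie{\omega,\alpha}B$ to play no role in the final answer, and the identity reduces instantly to $T = N^\top T_\H N$, which is \eqref{eqn:newT-Case-III}.
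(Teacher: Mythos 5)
Your plan is correct and follows essentially the same route as the paper's Appendix~\ref{ssec:torsion}: compute $\Lie{\omega,\alpha}L$, $\Lie{\omega,\alpha}B$, $\Lie{\omega,\alpha}\tilde N$, $\Lie{\omega,\alpha}A$ from the exact invariance equation, apply the Jacobi identity \eqref{eqn:Jacobi-indentity} to eliminate all $\Dif_z Z_\H$-terms, and recognise the combination $\Dif_z Z_\H J - \Dif_z J[Z_\H] - J\,\Dif_z Z_\H = \Omega^{-1}T_\H J$ together with the companion identity $(\Dif_z Z_\H)^\top G + \Dif_z G[Z_\H] + G\,\Dif_z Z_\H = T_\H J$. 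The only organizational difference is that you first isolate the intermediate identity $T = \Lie{\omega,\alpha}A + N^\top T_\H \tilde N + N^\top\Omega N^0\,\Lie{\omega,\alpha}B$ (using $\Loper L = O$ and $N^\top \Omega L = I_n$, both exact on an invariant torus) before substituting the expressions for $\Lie{\omega,\alpha}A$ and $\Lie{\omega,\alpha}B$, whereas the paper computes all four Lie derivatives \eqref{eq:LieDK}--\eqref{eq:LieA} up front and substitutes directly into $T = N^\top\Omega\Loper{N}$; this makes your bookkeeping of the final cancellation slightly more transparent, but the underlying algebra is identical. Your expression $\Lie{\omega,\alpha}B = BL^\top T_\H JLB$ coincides with the paper's $\tilde N^\top J^{-\top}T_\H\tilde N$ in \eqref{eq:LieB} since $\tilde N^\top J^{-\top} = BL^\top$ and $JLB = \tilde N$, and the auxiliary identities $\tilde N^\top\Omega\tilde N = -2A$, $L^\top\Omega N^0 = -B^{-1}$, and $N^\top\Omega\tilde N = -A$ (from $N^\top\Omega N = O_n$) that drive your final cancellation all hold exactly on an invariant Lagrangian torus. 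One small clarification worth adding when you write this up: the Case~III collapse to $T = N^\top T_\H N$ also uses that $T_\H$ is symmetric there, which the paper asserts after \eqref{eqn:newTh-Case-III} but does not spell out.
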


\begin{proof}
Let us consider
\[
\tilde N(\te, \vp) = N(\te,\vp)^0B(\te, \vp) = J(K((\te,\vp))L(\te, \vp)B(\te, \vp)\,.
\]
Through the substitution of this expression into the equations,
\eqref{eq:N} and \eqref{eq:A}, 
we obtain:
\begin{equation}\label{eq:N-App}
N(\te, \vp) = L(\te,\vp)A(\te,\vp) + \tilde{N}(\te,\vp)\,,
\end{equation}
\begin{equation}\label{eq:A-App}
A(\te, \vp) = -\dfrac{1}{2}\tilde N(\te,\vp)^\top \Omega(K(\te,\vp),\vp)\tilde{N}(\te,\vp)\,.
\end{equation}
Now,
we calculate the following derivatives; 
$\Lie{\om, \al}L(\te, \vp)$, 
$\Lie{\om, \al}B(\te, \vp)$, 
$\Lie{\om, \al}\tilde N(\te, \vp)$ 
and $\Lie{\om, \al} A(\te, \vp)$. 
Since they are necessary to calculate $\Lie{\om, \al}N(\te, \vp)$.
For the first expression, 
it is sufficient to differentiate the invariance equation, 
\eqref{eqn:invariance_equation},
with respect to $\theta$, that is,
\begin{equation}\label{eq:LieDK}
    \Lie{\omega, \alpha}L(\theta,\varphi)=
 - \Dif_z Z_\H(K(\theta,\varphi) ,\varphi)L(\theta,\varphi)\,,
\end{equation}
For the second one, 
let us take the Lie derivative of $I_{d} = B^{-1}B$, 
we have obtain that 
$\Lie{\omega,\alpha} B 
= -B \Lie{\omega,\alpha} B^{-1} B 
= -B \Lie{\omega,\alpha} \left( L^\top (G \circ K)) L \right)B$, 
i.e,
\begin{equation}\label{eq:LieB}
\begin{split}
\Lie{\om,\al} B(\te,\vp)
%=&- B(\te,\vp)\Lie{\om,\al} B(\te,\vp)^{-1} B(\te,\vp) \\
%= & - B(\te,\vp)\Big( \Lie{\om,\al} L(\te,\vp)^\top G(K(\te,\vp)) L(\te,\vp) \\
%  &\qquad + L(\te,\vp)^\top \Lie{\om,\al} G(K(\te,\vp)) L(\te,\vp)  \\
%  &\qquad + L(\te,\vp)^\top G(K(\te,\vp)) \Lie{\om,\al} L(\te,\vp)\Big)B(\te,\vp)\\
%= & - B(\te,\vp) L(\theta,\varphi)^\top \Big(\Dif_z Z_\H (K(\theta,\varphi) ,\varphi)^\top \Omega(K(\te,\vp)) J(K(\te,\vp)) \\
%  &\qquad + \Dif_z \Omega(K(\te,\vp))\left[ Z_\H\left(K(\te, \vp),\vp \right) \right] J( K(\theta,\varphi)) \\
%  &\qquad + \Omega(K(\te,\vp)) \Dif_z J( K(\theta,\varphi) )\left[ Z_\H\left(K(\te, \vp),\vp \right) \right]  \\
%  &\qquad + \Omega(K(\te,\vp)) J(K(\te,\vp)) \Dif_z Z_\H(K(\theta,\varphi) ,\varphi)\Big)L(\theta,\varphi) B(\te, \vp) \\
%= & B(\te,\vp) L(\theta,\varphi)^\top \Big(\Omega(\te,\vp) \Dif_z Z_\H (K(\theta,\varphi) ,\varphi) J(K(\te,\vp)) \\
%  &\qquad - \Omega(K(\te,\vp)) \Dif_z J(\te,\vp)\left[ Z_\H\left(K(\te, \vp),\vp \right) \right]  \\
%  &\qquad - \Omega(K(\te,\vp)) J(K(\te,\vp)) \Dif_z Z_\H(K(\theta,\varphi) ,\varphi)\Big)L(\theta,\varphi) B(\te, \vp) \\
= & B(\te,\vp)^\top L(\theta,\varphi)^\top \Big(\Omega( K(\theta,\varphi) ,\vp) \Dif_z Z_\H (K(\theta,\varphi) ,\varphi) \\
  &\qquad - \Omega(K(\te,\vp)) \Dif_z J( K(\theta,\varphi))\left[ Z_\H\left(K(\te, \vp),\vp \right) \right]  J(K(\te,\vp))^{-1} \\
  &\qquad - \Omega(K(\te,\vp)) J(K(\te,\vp)) \Dif_z Z_\H(K(\theta,\varphi) ,\varphi)  J(K(\te,\vp))^{-1} \Big) \tilde N(\theta,\varphi) \\
= & \tilde N(\te,\vp)^\top J(K(\te,\vp))^{-\top} T_\H( K(\theta,\varphi) ,\vp)) \tilde N(\te,\vp)\,.
\end{split}
\end{equation}
Equations \eqref{eq:LieDK}, 
\eqref{eqn:Jacobi-indentity} and
\begin{equation*}
\begin{split}
T_h(z,\vp)= \Omega(z) \Big(& \Dif_z Z_\H(z, \vp) 
    -  \Dif_z J(z) \left[z, \vp) \right] J(z)^{-1}
    - J(z) \Dif_z Z_\H(z, \vp) J(z)^{-1} \Big) \,,
\end{split}
\end{equation*}
were used for this calculation.
For the next one, 
we consider, the definition of $\tilde N$  and the equations \eqref{eq:LieDK} and \eqref{eq:LieB}.
\begin{equation}\label{eq:LieTildeN}
\begin{split}
\Lie{\om,\al} \tilde N(\te,\vp)
%=& \Lie{\om,\al} \left( J(K(\te,\vp)) L(\te,\vp) B(\te,\vp) \right) \\
    = &  -\Dif_z J(K(\te,\vp))\left[ Z_\H\left(K(\te, \vp),\vp \right) \right]L(\te,\vp) B(\te,\vp)\\ 
   & - J(K(\te,\vp))  \Dif_z Z_\H(K(\theta,\varphi) ,\varphi) L(\te,\vp)  B(\te,\vp) \\
    & + J(K(\te,\vp)) L(\te,\vp) \tilde N(\te,\vp)^\top J(K(\te,\vp))^{-\top} T_\H(K(\theta,\varphi),\varphi)) \tilde N(\te,\vp)\,.
\end{split}
\end{equation}
Therefore the Lie derivative of $A$, is:
\begin{equation}\label{eq:LieA}
\begin{split}
\Lie{\om,\al} & A(\te,\vp) \\
%=& \Lie{\om,\al} \left( J(K(\te,\vp)) L(\te,\vp) B(\te,\vp) \right) \\
= & - \dfrac{1}{2}\Lie{\om,\al} \tilde N(\te,\vp)^\top  \Omega(K(\te,\vp))  \tilde N(\te,\vp)  \\ 
   & - \dfrac{1}{2}  \tilde N(\te,\vp)^\top  \Lie{\om,\al}  \Omega(K(\te,\vp))  \tilde N(\te,\vp) \\
   & - \dfrac{1}{2} \tilde N(\te,\vp)^\top  \Omega(K(\te,\vp)) \Lie{\om,\al} \tilde N(\te,\vp) \\
%= &   \dfrac{1}{2} B(\te,\vp)^\top L(\te,\vp)^\top \Dif_z J(\te,\vp)\left[ Z_\H\left(K(\te, \vp),\vp \right) \right]^\top  \Omega(K(\te,\vp))  \tilde N(\te,\vp) \\
%   & +\dfrac{1}{2} B(\te,\vp)^\top L(\te,\vp)^\top (\Dif_z  Z_\H\left(K(\te, \vp), \vp \right) )^\top J(K(\te,\vp))^\top \Omega(K(\te,\vp)) \tilde N(\te,\vp) \\
%   & - \dfrac{1}{2}  \tilde N(\te,\vp)^\top T_\H(\te,\vp))^\top J(K(\te,\vp))^{-1} \tilde N(\te,\vp) B(\te,\vp)^\top  J(K(\te,\vp))^\top  \Omega(K(\te,\vp)) \tilde N(\te,\vp) \\
%   & - \dfrac{1}{2}  \tilde N(\te,\vp)^\top \left( Z_\H\left(K(\te, \vp),\vp \right) \right)^\top  \Omega(\te,\vp)N(\te,\vp) \\
%   & -\dfrac{1}{2}  \tilde N(\te,\vp)^\top  \Omega(\te,\vp) Z_\H\left(K(\te, \vp),\vp \right)  N(\te,\vp) \\
%   & +\dfrac{1}{2}  \tilde N(\te,\vp)^\top  \Omega(K(\te,\vp)) \Dif_z J(\te,\vp)\left[ Z_\H\left(K(\te, \vp),\vp \right) \right]L(\te,\vp) B(\te,\vp) \\
%   & +\dfrac{1}{2}  \tilde N(\te,\vp)^\top  \Omega(K(\te,\vp)) J(K(\te,\vp))  \Dif_z Z_\H(K(\theta,\varphi) ,\varphi) L(\te,\vp)  B(\te,\vp)  \\
%   & - \dfrac{1}{2}  \tilde N(\te,\vp)^\top  \Omega(K(\te,\vp)) J(K(\te,\vp)) B(\te,\vp) \tilde N(\te,\vp)^\top J(K(\te,\vp))^{-\top} T_\H(\te,\vp)) \tilde N(\te,\vp)\\
    = &  \dfrac{1}{2} \tilde N(\te,\vp)^\top J(K(\te,\vp))^{-\top} \Dif_z J(K(\te,\vp))\left[ Z_\H\left(K(\te, \vp),\vp \right) \right]^\top  \Omega(K(\te,\vp))  \tilde N(\te,\vp) \\
   & +\dfrac{1}{2} \tilde N(\te,\vp)^\top J(K(\te,\vp))^{-\top} (\Dif_z  Z_\H\left(K(\te, \vp), \vp \right) )^\top J(K(\te,\vp))^\top \Omega(K(\te,\vp)) \tilde N(\te,\vp) \\
    & - \dfrac{1}{2}  \tilde N(\te,\vp)^\top T_\H(K(\theta, \varphi),\vp))^\top J(K(\te,\vp))^{-1} \tilde N(\te,\vp) L(\te,\vp)^\top  J(K(\te,\vp))^\top  \Omega(K(\te,\vp)) \tilde N(\te,\vp) \\
    & - \dfrac{1}{2}  \tilde N(\te,\vp)^\top \left( \Dif_z Z_\H\left(K(\te, \vp),\vp \right) \right)^\top  \Omega(K(\te,\vp))N(\te,\vp) \\
    & -\dfrac{1}{2}  \tilde N(\te,\vp)^\top  T_\H(K(\theta, \varphi),\vp))   \tilde N(\te,\vp) \\
    & - \dfrac{1}{2}  \tilde N(\te,\vp)^\top  \Omega(K(\te,\vp)) J(K(\te,\vp)) L(\te,\vp) \tilde N(\te,\vp)^\top J(K(\te,\vp))^{-\top} T_\H(K(\theta,\varphi),\vp)) \tilde N(\te,\vp) \,,
\end{split}
\end{equation}
Using \eqref{eq:N-App}, \eqref{eq:LieDK},\eqref{eq:LieB}, \eqref{eq:LieTildeN}, \eqref{eq:LieA} in \eqref{eq:T}, 
we obtain
\begin{equation*}
\begin{split}
    T(K(\theta, \varphi),\vp) 
    = & \dfrac{1}{2}\tilde N(\te,\vp)^\top \left(T_h(K(\theta,\varphi),\vp) +  T_h(K(\theta,\varphi),\vp)^\top\right)\tilde N(\te,\vp) \\
& +
    \tilde N(\te,\vp)^\top T_h(K(\theta,\varphi),\vp)^\top L(\te,\vp) A(\te,\vp) \\ 
    & - A(\te,\vp)L(\te,\vp)^\top T_h(K(\theta, \varphi),\vp)\tilde N(\te,\vp) \,.
\end{split}
\end{equation*}

\end{proof}

\section{Compendium of constants in the KAM theorem}\label{ssec:consts}

In this appendix we collect the recipes to compute all constants involved in the different estimates presented in the paper. 
Keeping track of these constants is crucial to apply the KAM theorem in particular problems and for concrete values of parameters. 
%In addition, we think that the labels included in the tables will be of valuable help assisting the reading
%of the paper. 
%Thus, 
%the reader can find the place where a particular object is estimated.
%Given an object $X :\TT^n_{\rho_*} \to \CC^{n_1 \times n_2}$, the following tables
%code an estimate of the form
%\[
%\norm{X}_{\rho_*} \leq\frac{C_X}{\gamma^{a_*} \delta^{b_*}} \norm{E_*}_\rho^{c_*}\,.
%\]
%Notice that the strip $\rho_*$ and
%the exponents $a_*, b_*, c_*$ can be tracked following the corresponding label;
%and $E_*$ is the target error ($E_*=E$ for Theorem \ref{theo:KAM}).
%and $E_*=E_c$ for Theorem \ref{theo:KAM:iso}).
Let us remark that, 
as it becomes clear in the proof, 
the numbersv$a_1$, $a_2$ and $a_3$ are independent parameters, 
that can be selected in order to optimize the applicability of the theorems
depending on the particular problem at hand.
Table \ref{tab:constants:all} corresponds to the geometric construction of the theorem. 
The constants needed for a Newton step are outlined in Table \ref{tab:constants:all:2},  
while Table \ref{tab:constants:all:3}  contains the constants related to the method's convergence.
%%%\begin{remark}
%%%Table \ref{tab:constants:all} corresponds to the geometric
%%%construction that is common to both theorems. The constants
%%%associated to the ordinary KAM theorem \ref{theo:KAM}
%%%are presented in Tables
%%%\ref{tab:constants:all:2} and
%%%\ref{tab:constants:all:3}.
%%%%The constants associated to the iso-energetic KAM theorem \ref{theo:KAM:iso}
%%%%are presented in Tables
%%%%\ref{tab:constants:all:2:iso} and
%%%%\ref{tab:constants:all:3:iso}.
%%%%To reduce the length of the tables, in the iso-energetic situation
%%%%we have omitted those constants that have the same formula that in
%%%%the ordinary case. 
%%%\end{remark}

\bgroup
{\scriptsize
\def\arraystretch{1.5}
\begin{longtable}{|l l l l|}
\caption{Constants introduced in Section \ref{sec:lemmas}. 
    Constants with $^*$ in the label correspond to {\bf Case III}. }
%We denote by $\chi_0$ the
%characteristic function of the set $\{0\}$.} 
\label{tab:constants:all} \\
\hline
Object & Constant & Label & Result\\
\hline
\hline
$\Lie{\omega}\Omega_L$ &
$\CLieOmegaL = 2 n \cteOmega \sigmaDK+ \sigmaDKT \cteDOmega \sigmaDK \delta + n\sigmaDKT \cteOmega$ & 
\eqref{eq:CLieOK} & Lemma \ref{lem:isotrop}\\
\hline
$\Omega_L$ & $\COmegaL = c_R \CLieOmegaL$ & \eqref{eq:COK} & Lemma \ref{lem:isotrop}\\
\hline
$L$ & $\CL =  \sigmaDK$ & \eqref{eq:CL} & Lemma \ref{lem:Lang} \\
\hline
$L^\top$ & $\CLT = \sigmaDKT$ & \eqref{eq:CLT} & Lemma \ref{lem:Lang} \\
\hline
%$\Elag$ & $\Clag = \COmegaK$ & \eqref{eq:Clag} & Lemma \ref{lem:Lang}\\
%\hline
$G_L$ & $\CGL =  \CLT \cteG \CL$ & \eqref{eq:CGL} & Lemma \ref{lem:Lang} \\
\hline
%$\tilde \Omega_L$ & $\CtOmegaL = \CLT \ctetOmega \CL$ & \eqref{eq:CtOmegaL} & Lemma \ref{lem:Lang} \\
%\hline
$N^0$ & $\CNO = \cteJ \CL$ & \eqref{eq:CNO} & Lemma \ref{lem:sympl} \\
\hline
$(N^0)^\top$ & $\CNOT = \CLT \cteJT$ & \eqref{eq:CNOT} & Lemma \ref{lem:sympl} \\
\hline
    $\tilde N$ & $\CTildeN = \CNO \sigmaB$  & \eqref{eq:CtildeN} & Lemma \ref{lem:sympl} \\
\hline
$\tilde N^\top$ & $\CTildeNT = \sigmaB \CNOT$ & \eqref{eq:CtildeNT} & Lemma \ref{lem:sympl} \\
\hline
$A$ & $\CA = \tfrac{1}{2} \CTildeNT \cteOmega\CTildeN$ \,, $ ^*\CA = 0 $ 
 & \eqref{eq:CA}$^*$ & Lemma \ref{lem:sympl} \\
\hline
    $N$ & $\CN = \CL \CA + \CTildeN $ \,, $ ^*\CN = \CTildeN $  & \eqref{eq:CN} & Lemma \ref{lem:sympl} \\
\hline
$N^\top$ & $\CNT = \CA \CLT + \CTildeNT$  \,, $ ^*\CNT = \CTildeNT$ & \eqref{eq:CNT} & Lemma \ref{lem:sympl} \\
\hline
$\Esym$ & 
    $\Csym =  (1 + \CA) \max\{1,\CA \} \, \Clag$ \,, $ ^* \Csym =  \Clag \max\{1,(\sigmaB)^2\}$
  & \eqref{eq:Csym} & Lemma \ref{lem:sympl} \\
\hline
$T$ & {$\!\begin{aligned} 
\CT = 
{} &
\tfrac{1}{2}(\CTildeNT\cteTH\CTildeN + \CTildeNT\cteTHT\CTildeN)  \\
& + \CTildeNT \cteTHT \CN + \CNT \cteTH  \CTildeN \\
^*\CT= {} & \CNT\cteTH\CN
\end{aligned}$}
$\vphantom{
\left\{\begin{array}{l}
0 \\
0 
\end{array}\right.}$
& \eqref{eq:CnewT} & Lemma \ref{lem:Newtwist} \\
%%& $^*\CT= \CNT\cteTH\CN$ &  \\
\hline
$T_E$ & {$\!\begin{aligned} 
\CTE = 
{} &
\cteDOmega \de + \cteOmega \cteDJ \cteJInv  \de  \\ 
& +  2n \cteJT \CL \sigmaB  \cteOmega^2  +  n \cteJT \sigmaB \CLT  \cteOmega^2 
\end{aligned}$}
$\vphantom{
\left\{\begin{array}{l}
0 \\
0 
\end{array}\right.}$
& \eqref{eq:CTE} & Lemma \ref{lem:constT} \\
\hline
$T_E^\top$ & {$\!\begin{aligned} 
\CTET = 
{} &
\cteDOmega \de +  \cteJInvT \cteDJT  \cteOmega \de  \\ 
& +  n  \sigmaB \CLT \cteJ \cteOmega^2  +  2n  \CL \sigmaB \cteJ\cteOmega^2 
\end{aligned}$}
$\vphantom{
\left\{\begin{array}{l}
0 \\
0 
\end{array}\right.}$
& \eqref{eq:CTET} & Lemma \ref{lem:constT} \\
\hline
$E_{\Lie{} B}$ & $\CELieB =\CTildeNT \cteJInvT \CTE \CTildeN$ & \eqref{eq:CELieB} & Lemma \ref{lem:constT} \\
\hline
$E_{\Lie{} B}^\top$ & $\CELieBT = \CTildeNT \CTET \cteJInv  \CTildeN$ & \eqref{eq:CELieBT} &Lemma \ref{lem:constT} \\
\hline
$E_{\Lie{} \tilde N}$ & $ \CELietildeN = \de \cteDJ  \CL  \sigmaB + n \cteJ \sigmaB +\cteJ  \CL  \CELieB$ & \eqref{eq:CELieTildeN} & Lemma \ref{lem:constT} \\
\hline
$E_{\Lie{}\tilde N}^\top$ & $ \CELietildeNT = \de \sigmaB \CLT \cteDJT  + 2n \sigmaB \cteJT  + \CELieBT \CLT \cteJT$ & \eqref{eq:CELieTildeNT} &Lemma \ref{lem:constT} \\
\hline
$E_{\Lie{}A}$ & $ \CELieA = \tfrac{1}{2} (\CELietildeNT \cteOmega \CTildeN + \de\CTildeNT\cteDOmega\CTildeN + \CTildeNT\cteOmega \CELietildeN )$ & \eqref{eq:CELieA} &Lemma \ref{lem:constT} \\
\hline
$E_{\Lie{}N}$ & $ \CELieN = \CELietildeN + \CL \CELieA + n \CA$ & \eqref{eq:CELieN} &Lemma \ref{lem:constT} \\
\hline
$E_T$ & 
{$\!\begin{aligned} 
\CET=
{} &
 \tfrac{1}{2}(\CA \COmegaL \CTildeNT \cteTHT  \CTildeN + \CA \COmegaL \CTildeNT  \cteTH  \CTildeN)  \\
& + \CA \COmegaL \CA  \cteTH  \CTildeN + \CA \COmegaL \CTildeNT  \cteTHT  \CTildeN \CL \CA \\
& + \gamma \delta^\tau \CA \CLT \cteOmega \CELieN + \gamma \delta^\tau \CTildeNT  \cteOmega \CELieN
\end{aligned}$}
$\vphantom{
\left\{\begin{array}{l}
0 \\
0 \\
0
\end{array}\right.}$
& \eqref{eq:CET} & Lemma \ref{lem:constT} \\
\hline
%%%$\Lie{\omega}K$ & $\CLieK = \delta \cauxT + \cteXH$ & \eqref{eq:CLieK} & Lemma \ref{lem:twist} \\
%%%\hline
%%%$\Lie{\omega}L$ & $\CLieL = n \cauxT + \cteDXH \sigmaDK$ & \eqref{eq:CLieL} & Lemma \ref{lem:twist} \\
%%%\hline
%%%$\Lie{\omega}L^\top$ & $\CLieLT = 2n \cauxT + \cteDXHT \sigmaDK$ & \eqref{eq:CLieLT} & Lemma \ref{lem:twist} \\
%%%\hline
%%%$\Lie{\omega} (J\circ K)$ & $\CLieJ = \cteDJ \CLieK$ & \eqref{eq:CLieJ} & Lemma \ref{lem:twist} \\
%%%\hline
%%%$\Lie{\omega} (G\circ K)$ & $\CLieG = \cteDG \CLieK$ & \eqref{eq:CLieG} & Lemma \ref{lem:twist} \\
%%%\hline
%%%$\Lie{\omega} (\tilde \Omega\circ K)$ & $\CLietOmega = \cteDtOmega \CLieK$ & \eqref{eq:CLietOmega} & Lemma \ref{lem:twist} \\
%%%\hline
%%%$\Lie{\omega} N^0$ & $\CLieNO = \CLieJ \CL+\cteJ \CLieL$ & \eqref{eq:CLieNO} & Lemma \ref{lem:twist} \\
%%%\hline
%%%$\Lie{\omega} G_L$ & 
%%%$\CLieGL = \CLieLT \cteG \CL + \CLT \CLieG \CL + \CLT \cteG 
%%%\CLieL$ & \eqref{eq:CLieGL} & Lemma \ref{lem:twist} \\
%%%\hline
%%%$\Lie{\omega} \tilde \Omega_L$ & 
%%%$\CLietOmegaL = \CLieLT \ctetOmega \CL + \CLT \CLietOmega \CL + \CLT \ctetOmega 
%%%\CLieL$ & \eqref{eq:CLietOmegaL} & Lemma \ref{lem:twist} \\
%%%\hline
%%%$\Lie{\omega} B$ & 
%%%$\CLieB = (\sigmaB)^2 \CLieGL$ & \eqref{eq:CLieB} & Lemma \ref{lem:twist} \\
%%%\hline
%%%$\Lie{\omega} A$ &  $\CLieA = \CLieB \CtOmegaL \sigmaB + \tfrac{1}{2} (\sigmaB)^2 \CLietOmegaL$
%%%& \eqref{eq:CLieA}$^*$ & Lemma \ref{lem:twist} \\
%%%\hline
%%%$\Lie{\omega}N$ &
%%%$\CLieN = \CLieL \CA + \CL \CLieA + \CLieNO \sigmaB + \CNO \CLieB$ &
%%%\eqref{eq:CLieN} & Lemma \ref{lem:twist} \\
%%%\hline
$\Loper{L}$ & $\CLoperL = n$ & \eqref{eq:CLoperL} & Lemma \ref{lem:reduc}\\
\hline
$\Loper{L}^\top$ & $\CLoperLT = 2n $ & \eqref{eq:CLoperLT} & Lemma  \ref{lem:reduc} \\
\hline
%$\Loper{N}$ & {$\!\begin{aligned} 
%\CLoperN = 
%{} &
%\CLoperL \cauxT \CA+\cteDXH \CNO \sigmaB + \CL \CLieA \\
%& + \CLieNO \sigmaB + \CNO \CLieB
%\end{aligned}$}
%$\vphantom{
%\left\{\begin{array}{l}
%0 \\
%0 
%\end{array}\right.}$
%& \eqref{eq:CLoperN} & Lemma \ref{lem:twist} \\
%\hline
%$T$ & $\CT = \CNT \cteOmega \CLoperN$ & \eqref{eq:CT} & Lemma \ref{lem:twist} \\
%\hline
%$\Lie{\omega} \Omega_L$ & $\CLieOmegaL = \max\{ \CLieOmegaK + \cteDpT\,,\, d \cteDp \}$ &
%\eqref{eq:CLieOmegaL} & Lemma \ref{lem:reduc} \\
$\Lie{}\bar A$ & 
{$\!\begin{aligned} 
\CLieA=
{} &
 \tfrac{1}{2}(\CTildeNT \cteTHT \CTildeN + \CTildeNT \cteTHT \cteJInv \CTildeN \CLT \cteJT \cteOmega \CTildeN \\
& + \CTildeNT \cteTH \CTildeN 
    + \left.\CTildeNT \cteOmega \cteJ \CL \CTildeNT \cteJInvT \cteTH \CTildeN\right) \\
    ^*\CLieA= {} & 0    
\end{aligned}$
}
$\vphantom{
\left\{\begin{array}{l}
0 \\
0 \\
0 \\
0 
\end{array}\right.}$
& \eqref{eq:CLieA} & Lemma \ref{lem:reduc} \\
\hline
$\Ered^{1,1}$ & $\Creduu = \CNT \cteOmega \CLoperL$ & \eqref{eq:Creduu} & Lemma \ref{lem:reduc} \\
\hline
$\Ered^{1,2}$ & $\Credud = \CET$ & \eqref{eq:Credud} & Lemma \ref{lem:reduc} \\
\hline
$\Ered^{2,1}$ & $\Creddu = \CLT \cteOmega \CLoperL$ & \eqref{eq:Creddu} & Lemma \ref{lem:reduc} \\
\hline
$\Ered^{2,2}$ & 
{$\!\begin{aligned} 
\Creddd =
{} &
 (\CLT \cteDOmega \CN \delta + \CLoperLT \cteOmega \CN + \CLieOmegaL \CA +\sigmaDKT\cteOmega\sigmaDK\CELieA) \gamma \delta^\tau \\
& + \Clag \CLieA
\end{aligned}$}
$\vphantom{
\left\{\begin{array}{l}
0 \\
0 
\end{array}\right.}$
& \eqref{eq:Creddd} & Lemma \ref{lem:reduc} \\
\hline
$\Ered$ &
$\Cred =
\max \{
\Creduu \gamma \delta^{\tau} + \Credud
\, , \,
\Creddu \gamma \delta^{\tau} + \Creddd
\}$
& \eqref{eq:CEred} & Lemma \ref{lem:reduc} \\
\hline
\end{longtable}}
\egroup

\bgroup
{\tiny
\def\arraystretch{1.5}
\begin{longtable}{|l l l|}
\caption{\scriptsize Constants introduced in Lemma \ref{lem:KAM:inter:integral}.
Given an object $Z_\H$, we use the notation $\Delta \LieO Z_\H = \Lie{\omega,\alpha} \bar Z_\H-\Lie{\omega,\alpha} Z_\H$.
Constants with $^*$ in the label are $0$ in {\bf Case III}. 
} \label{tab:constants:all:2} \\
\hline
Object & Constant & Label \\
\hline
\hline
$\xi_0^N$ & $\CxiNO = \sigmaT(\CNT \cteOmega \gamma \delta^\tau + c_R \CT \CLT \cteOmega)$
& \eqref{eq:CxiNO} \\
\hline
$\xi^N$ &
$\CxiN = \CxiNO + c_R \CLT \cteOmega$ & \eqref{eq:CxiN} \\ 
\hline
$\xi^L$ &
$\CxiL = c_R (\CNT \cteOmega \gamma \delta^\tau + \CT \CxiN)$ & \eqref{eq:CxiL} \\
\hline
$\xi$ & 
$\Cxi = \max \{\CxiL \, , \, \CxiN \gamma \delta^\tau\}$ & \eqref{eq:Cxi} \\
\hline
$\DeltaK$ &
$\CDeltaK = \CL \CxiL + \CN \CxiN \gamma \delta^\tau$ & \eqref{eq:CDeltaK} \\
\hline
$\Lie{\omega} \xi^N$ &
$\CLiexiN=  \CLT \cteOmega$
& \eqref{eq:CLiexiN} \\
\hline
$\Lie{\omega} \xi^L$ &
$\CLiexiL= \CNT \cteOmega \gamma \delta^\tau + \CT \CxiN$
& \eqref{eq:CLiexiL} \\
\hline
$\Lie{\omega} \xi$ &
$\CLiexi = \max \{ \CLiexiL \, , \, \CLiexiN \gamma \delta^\tau\}$
& \eqref{eq:CLiexi} \\
\hline
$\Elin$ &
$\Clin = \Cred \Cxi + \cteOmega \Csym \CLiexi \gamma \delta^\tau$ &
\eqref{eq:Clin} \\
\hline
$\bar E$ &
$\CE = 2(\CL +\CN) \Clin \gamma \delta^{\tau-1} + \tfrac{1}{2} \cteDDXH (\CDeltaK)^2$ & \eqref{eq:CE} \\
\hline
$\DeltaL$ &
$\CDeltaL = n \CDeltaK$ &
\eqref{eq:CDeltaL} \\
\hline
$\DeltaLT$ &
$\CDeltaLT = 2n\CDeltaK$ &
\eqref{eq:CDeltaLT} \\
\hline
$\DeltaG$ & $\CDeltaG = \cteDG \CDeltaK$ & \eqref{eq:CDeltaG} \\
\hline
$\DeltaGL$ &
$\CDeltaGL = \CLT \cteG \CDeltaL + \CLT \CDeltaG \CL \delta + \CDeltaLT \cteG \CL$ & \eqref{eq:CDeltaGL} \\
\hline
$\DeltaB$ &
$\CDeltaB = (\sigmaB)^2 \CDeltaGL$ & \eqref{eq:CDeltaB} \\
%\hline
%$\DeltatOmega$ & $\CDeltatOmega = \cteDtOmega \CDeltaK$ & \eqref{eq:CDeltatOmega} \\
%\hline
%$\DeltatOmegaL$ &
%$\CDeltatOmegaL = \CLT \ctetOmega \CDeltaL + \CLT \CDeltatOmega \CL \delta + \CDeltaLT \ctetOmega \CL$ &
%\eqref{eq:CDeltatOmegaL} \\
\hline
$\DeltaOmega$ &
$\CDeltaOmega = \cteDOmega \CDeltaK$ &
\eqref{eq:CDeltaOmega} \\
%\hline
%$\DeltaXH$ &
%$\CDeltaXH = \cteDXH \CDeltaK$ &
%\eqref{eq:CDeltaXH} \\
%\hline
%$\DeltaDXH$ &
%$\CDeltaDXH = \cteDDXH \CDeltaK$ &
%\eqref{eq:CDeltaDXH} \\
%\hline
%$\DeltaDXHT$ &
%$\CDeltaDXHT = \cteDDXHT \CDeltaK$ &
%\eqref{eq:CDeltaDXHT} \\
\hline
$\DeltaJ$ &
$\CDeltaJ = \cteDJ \CDeltaK$ &
\eqref{eq:CDeltaJ} \\
\hline
%$\DeltaJInv$ &
%$\CDeltaJ = \cteDJInv \CDeltaK$ &
%\eqref{eq:CDeltaJInv} \\
%\hline
$\Delta J^\top$ & $\CDeltaJT = \cteDJT \CDeltaK$ & 
\eqref{eq:CDeltaJT} \\
\hline
%$\Delta J^{-\top}$ & $\CDeltaJTInv = \cteDJInvT \CDeltaK$ & 
%\eqref{eq:CDeltaJTInv} \\
%\hline
$\DeltaNO$ &
$\CDeltaNO = \cteJ \CDeltaL + \CDeltaJ \CL \delta$
& \eqref{eq:CDeltaNO} \\
\hline
$\DeltaNOT$ &
$\CDeltaNOT = \CDeltaLT \cteJT + \CLT \CDeltaJT \delta$ & \eqref{eq:CDeltaNOT} \\
\hline
$\DeltaTildeN$ &
$\CDeltaTildeN = \CDeltaNO \sigmaB  + \CNO \CDeltaB$ & \eqref{eq:CDeltaTildeN} \\
\hline
$\DeltaTildeNT$ &
$\CDeltaTildeNT = \sigmaB \CDeltaNOT   + \CDeltaB \CNOT $ & \eqref{eq:CDeltaTildeNT} \\
\hline
$\DeltaA$ &
$\CDeltaA = \dfrac{1}{2} \Big( 
 \CDeltaTildeNT \cteOmega \CTildeN + \CTildeNT\CDeltaOmega\CTildeN\delta + \CTildeNT\cteOmega\CDeltaTildeN \Big) $
& \eqref{eq:CDeltaA} \\
& $^* \CDeltaA = 0$ & \\
\hline
$\DeltaN$ &
$\CDeltaN = \CA \CDeltaLT + \CDeltaA \CLT + \CDeltaTildeN$ & \eqref{eq:CDeltaN} \\
\hline
$\DeltaNT$ &
$\CDeltaNT = \CA \CDeltaLT + \CDeltaA \CLT + \CDeltaTildeNT$ & \eqref{eq:CDeltaNT} \\
\hline
$\DeltaTH$ &
$\CDeltaTH = \cteDTH \CDeltaK$ & \eqref{eq:CDeltaTH} \\
\hline
$\DeltaTHT$ &
$\CDeltaTHT = \cteDTHT \CDeltaK$ & \eqref{eq:CDeltaTHT} \\
\hline
$\DeltaT$ & {$\!\begin{aligned} 
\CDeltaT = {} &
\tfrac{1}{2}\CDeltaTildeNT \cteTH \CTildeN + \CTildeNT \CDeltaTH \CTildeN \delta + \CTildeNT \cteTH \CDeltaTildeN \\
& + \tfrac{1}{2} \CDeltaTildeNT \cteTHT \CTildeN + \CTildeNT \CDeltaTHT \CTildeN \delta + \CTildeNT \cteTHT \CDeltaTildeN \\
& + \CDeltaTildeNT \cteTHT \CN + \CTildeNT \CDeltaTHT \CN \delta + \CTildeNT \cteTHT \CDeltaN \\
& + \CDeltaNT \cteTH \CTildeN + \CNT \CDeltaTH \CTildeN \delta + \CNT \cteTH \CDeltaTildeN
\end{aligned}$}
$\vphantom{
\left\{\begin{array}{l}
0 \\
0 \\
0 \\
0
\end{array}\right.}$
& \eqref{eq:CDeltaT} \\
\hline
$\DeltaTOI$ &
$\CDeltaTOI = (\sigmaT)^2 \CDeltaT$ & \eqref{eq:CDeltaTOI} \\
\hline
\end{longtable}}
\egroup

%& \quad +  \frac{}{\gamma^2 \delta^{2\tau+1}} \norm{E}_\rho \\
%& =: \frac{\CDeltaT}{\gamma^2 \delta^{2\tau+1}} \norm{E}_\rho\,. \label{eq:CDeltaT}

\bgroup
{\scriptsize
\def\arraystretch{1.5}
\begin{longtable}{|l l|}
\caption{Constants introduced in Section \ref{ssec:proof:KAM} associated
with the convergence of que quasi-Newton method that yields
Theorem \ref{theo:KAM}, i.e., the ordinary formulation.} \label{tab:constants:all:3} \\
\hline
Constant & Label \\
\hline
\hline
$\mathfrak{C}_1 = \max
\left\{
(a_1 a_3)^{4\tau} \CE \, , \, (a_3)^{2\tau+1} \gamma^2 \rho^{2\tau-1} \CDeltatot
\right\}$
&
\eqref{eq:cte:mathfrakC1} \\
\hline
$\CDeltaI = 
\max
\Bigg\{
\frac{n \CDeltaK}{\sigmaDK-\norm{\Dif K}_{\rho}} \, , \,
\frac{2n \CDeltaK}{\sigmaDKT-\norm{\Dif K^\top}_{\rho}} \, , \, 
\frac{\CDeltaB}{\sigmaB-\norm{B}_{\rho}} \, , \,
\frac{\CDeltaTOI}{\sigmaT-\abs{\aver{T}^{-1}}}
\Bigg\}$
& \eqref{eq:CDeltaI} \\
\hline
$\CDeltaII = \frac{\CDeltaK \delta}{\dist(K(\TT^d_{\rho}),\partial \B)}$
& \eqref{eq:CDeltaII} \\
\hline
$\CDeltatot = \max
    \Bigg\{ 
%{\color{magenta} \frac{\gamma^2 \delta^{2\tau}}{\nu}}\, , \,
\Csym \gamma \delta^\tau \, , \,
\frac{\CDeltaI}{1-a_1^{1-2\tau}} \, , \, 
\frac{\CDeltaII}{1-a_1^{-2\tau}}
\Bigg\}$
& \eqref{eq:CDeltatot} \\
\hline
$\mathfrak{C}_2 = a_3^{2\tau} \CDeltaK/(1-a_1^{-2\tau})$ &
\eqref{eq:Cmathfrak2} \\
%\hline
%$\mathfrak{C}_3 = \cteDc \mathfrak{C}_2$ &
%\eqref{eq:Cmathfrak3} \\
\hline
\end{longtable}}
\egroup

\end{document}